\DeclareMathOperator{\Li}{Li} 
\newcommand{\A}{{\mathcal A}}
\renewcommand{\N}{{\mathcal N}}
\newcommand{\F}{{\mathcal F}}
\newcommand{\be}{\begin{equation}}
\newcommand{\ee}{\end{equation}}
\newcommand{\RR}{R}
\let\Im\relax
\DeclareMathOperator{\Im}{Im}
\let\Re\relax
\DeclareMathOperator{\Re}{Re}
\newcommand{\CO}{\mathcal{C}}
\definecolor{ao}{rgb}{0.0, 0.5, 0.0}
\definecolor{rr}{rgb}{1, .5, 0.0}
\newcommand{\old}[1]{}
\begin{document}

\title{Limit shapes for the asymmetric five vertex model}
\author{Jan de Gier\footnote{University of Melbourne, Melbourne, Australia}, Richard Kenyon\footnote{Yale University, New Haven, CT}, Samuel S. Watson\footnote{Brown University, Providence, RI}}
\date{}
\maketitle
\begin{abstract}
  We compute the free energy and surface tension function for the
  \emph{five-vertex model}, a model of non-intersecting monotone lattice
  paths on the grid in which each corner gets a weight $r>0$.  We
  give a variational principle for limit shapes in this setting, and
  show that the resulting Euler-Lagrange equation can be integrated,
  giving limit shapes explicitly parameterized by analytic functions.
\end{abstract}

\section{Introduction} \label{sec:introduction}

A \textbf{six-vertex configuration} on $\Z^2$ is an orientation of the edges of $\Z^2$ with the property (called the \textit{ice rule}) that each vertex has two outgoing edges and two incoming edges, as shown in Figure~\ref{fig:exampleconfiguration}. If we draw the subset of north- or west-going edges in such a configuration, then we obtain a figure in which each vertex is one of the types in Figure~\ref{fig:sixvertices}. This subset of edges comprises a collection of edge-disjoint, non-crossing northwest-going lattice paths.

\begin{figure}[h!]
  \begin{minipage}[b]{0.495\textwidth}
    \centering
    \includegraphics[width=2.7in]{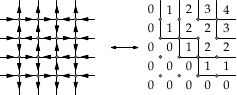}
    \captionof{figure}{A six-vertex configuration and heights}
    \label{fig:exampleconfiguration}
  \end{minipage} \hfill
  \begin{minipage}[b]{0.495\textwidth}
    \centering
    \includegraphics{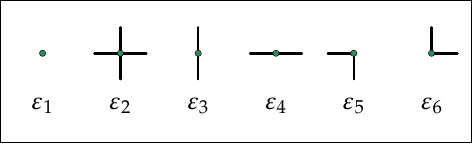}
    \captionof{figure}{The six possible configurations at each vertex} \label{fig:sixvertices}
  \end{minipage}
\end{figure}

We associate an integer-valued function $h$, called the \textbf{height function}, to a six-vertex configuration as follows. We define $h$ on the set of faces of $\Z^2$. If face $f_2$ is immediately above or to the right of $f_1$, then we require that $h(f_2) - h(f_1)$ is equal to $+1$ if the edge between $f_1$ and $f_2$ is oriented North or West, and $0$ otherwise. This condition defines $h$ uniquely up to an additive constant; note that the northwest lattice paths 
are the contours of $h$ (in other words, the paths separating consecutive values). 

Given real numbers $\eps_1, \ldots, \eps_6$ and a finite subgraph $G=(V,E)$ of $\Z^2$, we associate with each six-vertex configuration $\sigma$ on $G$ the Hamiltonian
\[
  H(\sigma) = \sum_{v \in V} \eps_{\sigma_v}, 
\]
where $\sigma$ is regarded as a map from $V$ to $\{1,2,\ldots,6\}$ which identifies the type of each vertex. We define the \textbf{partition function} $Z(G) = \sum_{\sigma}\e^{-H(\sigma)}$, where the sum is over all six-vertex configurations $\sigma$. The \textbf{six-vertex model} on $G$ is the probability measure $\P_G$ defined by 
\[
  \P_G(\sigma) = \frac{\e^{-H(\sigma)}}{Z(G)}. 
\]
Equivalently, we may define the \textbf{vertex weights} $a_i = \e^{-\eps_i}$ for $i=1,2,\ldots,6$, in which case the probability of a configuration is proportional to the product of its vertex weights.  

We consider the model with vertex weights
\begin{equation} \label{eq:basicweights}
  (a_1, \ldots, a_6) = (1,0,1,1,r,r),
\end{equation}
where $r$ is a positive real number. This is an $\eps_2 \to \infty$ limit of the six-vertex model. We can see in the context of Figure~\ref{fig:exampleconfiguration} that setting $a_2 = 0$ corresponds to eliminating the intersections between the northwest-going lattice paths. Therefore, when $r=1$, this model specializes to the monotone, non-intersecting lattice path (MNLP) model, or equivalently, the honeycomb dimer model. In the case $r=1$ the model is determinantal, and the partition function may be computed using the Karlin-MacGregor-Lindstr\"om-Gessel-Viennot method \cite{karlin1959coincidence,lindstrom1973vector,gessel1989determinants} or the Kasteleyn method \cite{kasteleyn1967graph}. This method is not available for other values of $r$.

We will also consider the family of probabilities measures, indexed by $r > 0$ and $(X,Y) \in \R^2$, corresponding to the vertex weights
\begin{equation} \label{eq:efieldweights} 
  (a_1, \ldots, a_6) = \left(1,0,\e^X, \e^Y, r\e^{\frac{X+Y}{2}}r, \e^{\frac{X+Y}{2}}\right). 
\end{equation}
Physically, we say that these new weights are the result of applying an ``electric field'' $(X,Y)$ to the original weights \eqref{eq:basicweights}; each vertical edge carries an extra weight $\e^X$ and each horizontal edge an extra weight $\e^Y$. This three-parameter family is sufficient to describe an arbitrary five-vertex model: given weights $(a_1, 0, a_3, a_4, a_5, a_6)$, we may normalize to set $a_1=1$ and replace $a_5$ and $a_6$ with their geometric mean. 
For the boundary conditions we consider (see below), the corresponding probability measure is unchanged  
by replacing $a_5$ and $a_6$ with their geometric mean, since the difference between 
the number of left turns and right turns is constant. (See however the discussion in the last paragraph of Section \ref{varprinsection} below.)

Our goal is to study the limit shape of the height function $h$, for Dirichlet boundary conditions on $G$,
see for example Figure \ref{bppr} and Figure \ref{bppbigrsim2}.
\begin{figure}[htbp]
\center{\includegraphics[width=5in]{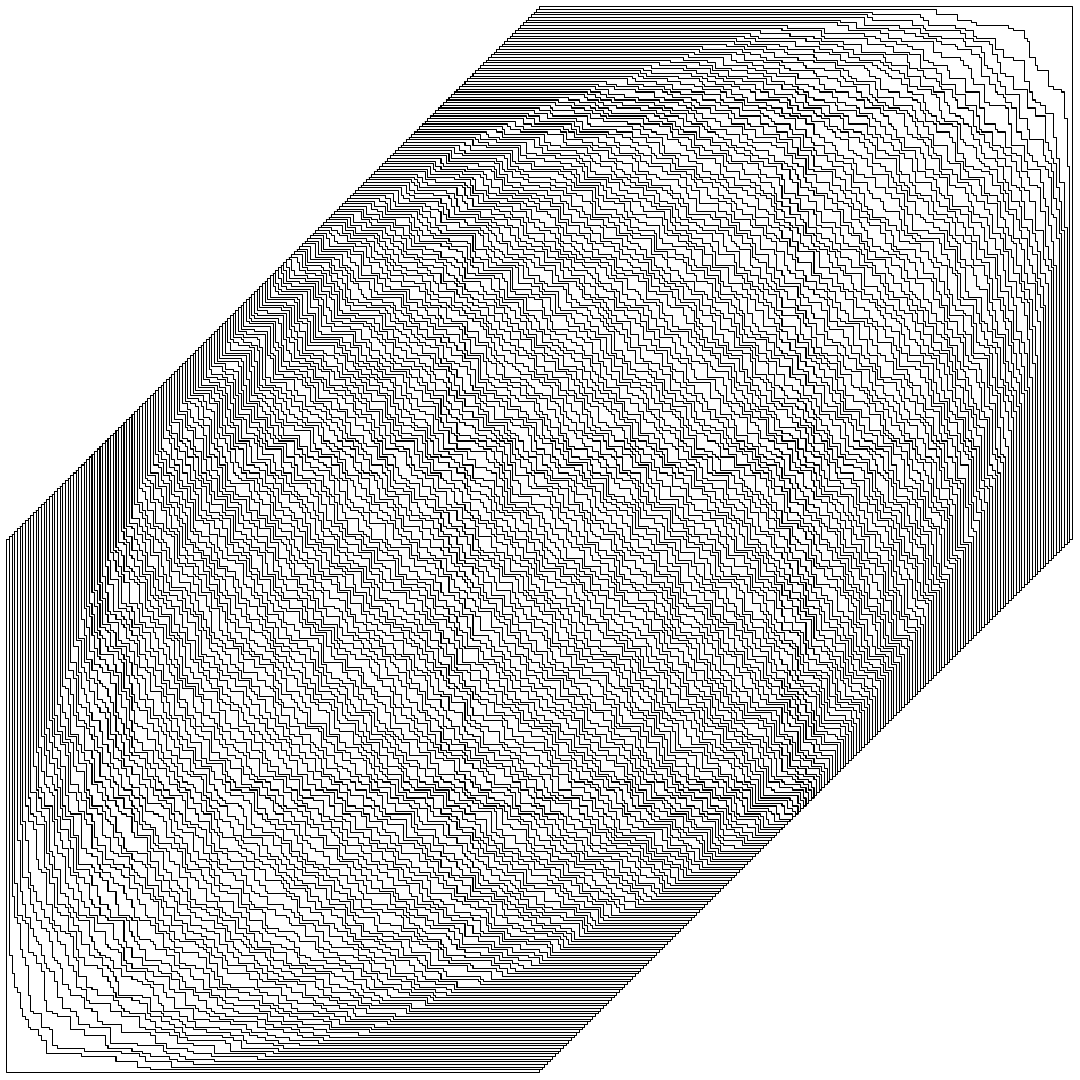}}
\caption{\label{bppr}The five-vertex model with ``boxed plane
  partition" boundary conditions and $r<1$. The lattice paths start along the
  diagonal, southeast side on the hexagon and exit along the northwest
  side. In terms of the height function, these are Dirichlet boundary
  conditions: $h$ is zero along the lower and left boundaries,
  increasing linearly along the diagonal boundaries, and constant
  (equal to the side length of the hexagon) along the upper and right
  boundaries. Shown is the case $r=0.6$ and $n=200$. Compare with the exact limit shape of Figure \protect{\ref{bppexact}}.}
\end{figure}

We will do this by computing a function $\sigma_r: \N \to \R$ called the \textbf{surface tension} of the model (see \cite{kenyon2007limit} for the $r=1$ case), where
$\N$ is the triangle with vertices at $(0,0)$, $(0,1)$, and $(1,0)$.
The function $\sigma$ defines a variational principle and thus a partial differential equation 
whose solution describes the limiting shape. The models \eqref{eq:basicweights} and \eqref{eq:efieldweights} are connected: the surface tension $\sigma_r$ and the free energy $F(X,Y)$ of \eqref{eq:efieldweights} are related by the \textit{Legendre transform} \cite{kenyon2007limit}: 
\[
  - \sigma_r(s,t) = \max \left\{ F(X,Y) - sX - tY \, : \, (X,Y) \in \R^2\right\}. 
\]

The six-vertex model has not been solved in full generality, but Sutherland, Yang, and Yang gave an explicit diagonalization (for finite system size) \cite{sutherland1967exact} of the ``transfer matrix'', based on earlier work by Lieb \cite{lieb1967exact}. It remains an important open problem to compute the asymptotics of the corresponding leading eigenvalue. The solution by Lieb, and indeed,
applications of the Bethe Ansatz in many other settings rely on nonrigorous arguments at certain points.
However the five-vertex model has a feature (an explicit form for the Bethe roots) that allows us to give a completely rigorous Bethe Ansatz argument.

The so-called free-fermionic case of the six-vertex model is the case $a_1a_2 - a_3 a_4+a_5a_6 = 0$. In this case the model is determinantal: edge probabilities are computed via determinants, and the limit shape problem admits an exact solution. This was carried out in a sequence of papers \cite{cohn2001variational,kenyon2006dimers,kenyon2006planar,kenyon2007limit}. 
The underlying PDE in this case is 
equivalent to the \textbf{complex Burgers equation} $\phi_x + \phi \phi_y = 0$, and can be solved 
using the method of
(complex) characteristics, see \cite{kenyon2007limit} and \cite{Griffithsetal}. 
This equation is also fundamental in random matrix theory and free probability.
The equation analogous to $\phi_x + \phi \phi_y = 0$ in the context of the five-vertex model 
(the model with weights \eqref{eq:basicweights}) is a generalization of the complex 
Burgers equation: the equation (\ref{BB0}). This equation is not amenable to the complex characteristics method. Nonetheless we will show how to find explicit solutions, parametrized by arbitrary analytic functions. 

When $r<1$ the surface tension $\sigma_r(s,t)$ has the interesting feature of not being strictly convex on all of $\N$; 
it is only strictly convex on a region $\N^*\subset\N$ bounded by a certain hyperbola, see Figure \ref{hyperb}.  On the complementary region $\N\setminus\N^*$ the surface tension is \emph{linear}. For the limit shape
problem we consider here, this leads to pieces of the limit shape (called ``neutral regions'') 
where the variational formalism gives no information. 
We conjecture that there is indeed \emph{no limit shape} in these regions, 
and the height function remains random in the scaling limit. The part of the limit shape corresponding to slopes where $\sigma$ is strictly convex is called the ``repulsive'' region. See Figure \ref{bppexact} for an example.

In \cite{RS} is considered a setting in which, in the neutral region, the height
satisfies a degenerate Euler-Lagrange equation, given by a variant of the Burgers' equation
(this posits that the height derivatives are constant along certain characteristic lines). A short calculation shows that this does not generally hold for the examples given in the current paper.

There have been a number of previous works on the five vertex model, see
e.g. \cite{Bogolyubov, GwaSpohn, Wuetal}. However, to our knowledge
none achieve an exact form for the free energy, nor discuss the limit
shape problem. The Bethe equations we use for the five-vertex model
are very similar to those for the asymmetric exclusion process, see
e.g. \cite{GwaSpohn, golinelli2006asymmetric}; in fact our calculation
is inspired by those works.

\subsection{Organization}
The paper is organized as follows.

In Section \ref{varprinsection} we discuss the variational principle for this model. 
The remaining sections deal with the explicit calculation of the surface tension and limit shapes.
The calculations for $r<1$ and $r>1$ are very similar but different in some details, so we start
with
the $r<1$ case, and redo the calculations for $r>1$ in a later section (Section \ref{bigr}).

In Section \ref{sec:bethe} we use the Bethe Ansatz to compute the eigenvalues of the transfer matrix
for fixed $n,N,r,X,Y$. (This is valid for both $r<1$ and $r>1$.)

In Section \ref{Max} we compute (for $r<1$) the asymptotics of the leading eigenvalue $\Lambda=\Lambda(r,n,N,X,Y)$.
The rescaled limit, as $n,N\to\infty$ with $n/N\to s$, gives the 
``microcanonical'' free energy $F_m(s,Y)$ defined by
\begin{equation}
\label{microc}F_m(s,Y):=\lim_{N\to\infty}\frac1N\log\Lambda - Xs.\end{equation}

In Section \ref{sec:legendre} the (grandcanonical) free energy $F(X,Y)$ is obtained by maximizing over all $s$:
\begin{equation}
\label{grandc}F(X,Y) = \max_{s\in[0,1]}\{F_m(s,Y)+Xs\}.\end{equation}
We also compute the surface tension $\sigma(s,t)$, its Legendre dual.

In Section \ref{sec:EL} we write down the Euler-Lagrange equation for the surface tension minimizing function
and reduce it to a first-order linear 
PDE. This PDE is solved in Section \ref{sec:limitshapes}, where we give the general solution parameterized by an
arbitrary analytic function, and give several examples.

In Section \ref{bigr} we redo the above calculations in the case $r>1$. 
In Section \ref{perron} we prove that our computation gives the leading eigenvalue.
\medskip

\noindent{\bf A note on notation.} The calculations are complicated by an abundance of variables. We review their
definitions and dependencies here. 
\begin{enumerate}
\item $r$, a positive, real constant, is the weight per corner of the lattice paths.
\item $X,Y$ are the ``external field": $e^X,e^Y$ are the weight per vertical, respectively horizontal, edge.
\item $s,t$ are slope variables: $s$ is the density of lattice paths per horizontal lattice spacing, and $t$ is the density of lattice paths per vertical lattice spacing. 
They are also the $x$- and $y$- derivatives of the expected height function. 
\item $\N$ is the set of allowed slopes: $\N$ is the convex hull of $(0,0),(1,0),(0,1)$.
\item $\N^*$ is the maximal subset of $\N$ on which $\sigma$ is strictly convex; see Figure \ref{hyperb}.
\item $N$ is the circumference of the cylinder.
\item $n$ is the number of particles on the cylinder. We take the limit $n,N\to\infty$ with $n/N\to s$.
\item $w_0$, also called $w$ in Sections \ref{sec:legendre} and onward. It is 
a complex number in the upper half plane; it is determined by and determines $s$ and $Y$
(See Figures \ref{twoanglefoliation} and \ref{wztriangles}). It serves as a ``conformal coordinate'' for the model
(we could equally well use $u_0$ or $\bar z_0$ as conformal coordinates).
\item For $r<1$, $u_0 = 1-(1-r^2)\bar w_0$. Also called $u$ in Sections \ref{sec:legendre} and onward. See Figure \ref{trianglefig}. For $r>1$, $u_0=1+(r^2-1)w_0$. In either case $R=|u_0|$.
\item $z_0$, also called $z$ in Sections \ref{sec:legendre} and onward. Its complex conjugate 
$\bar z_0$ is the analog of $w_0$ 
under the symmetry interchanging $s$ and $t$; $z_0$ satisfies $1-w-z+(1-r^2)zw=0,$ see Lemma \ref{spcurvesmall}.
This holds for both $r<1$ and $r>1$. 
\item $u^*$ is defined by $u^*\bar u=r^2$. 
\item For $r<1$, $\theta$ is defined so that the argument of $w_0$ is $s\theta$; in other words, $\theta = \arg\left(\frac{w_0}{1-w_0}\right)$. See Figure \ref{wztriangles}. For $r>1$, $\pi-s\theta=\arg w_0$. See Figure \ref{wbigr}.
\end{enumerate}
\bigskip

\noindent{\bf Acknowledgements.} Research of RK is supported by NSF grants DMS-1612668, DMS-1713033
and the Simons Foundation award 327929. JdG is support by the Australian Research Council through the ARC Centre of Excellence for Mathematical and Statistial Frontiers (ACEMS). We thank Amol Aggarwal, Hugo Duminil-Copin, Vadim Gorin,
Andrei Okounkov, and Istv\'an Prause for conversations and ideas about this project, and the referee for a very careful report on the paper.

\section{The variational principle}\label{varprinsection}

Let $U\subset\R^2$ be a piecewise smooth Jordan domain, and $u:\partial U\to\R$ continuous.
We say $u$ is \emph{feasible} if there exists a Lipschitz function $h:U\to\R$ with boundary values
$h|_{\partial U} = u$ and gradient in $\N$, the convex hull of $(0,0),(1,0),(0,1)$. We let $\Omega=\Omega(U,u)$ be the set of such 
$\N$-Lipschitz extensions of $u$.

For $\eps>0$ let $\eps\Z^2$ be the graph $\Z^2$ scaled by $\eps$. Let $U_\eps\subset\eps\Z^2$ be the 
(largest connected) subgraph of $\eps\Z^2$ whose vertices are $U\cap\eps\Z^2$ and edges connect nearest neighbors. Feasibility of $u$ guarantees the existence,
for small enough $\eps$, of a five-vertex configuration on $U_\eps$ whose associated
height function $h:U_\eps^*\to\Z$ has scaled boundary values $\eps h$ 
uniformly approximating $u$ (here $U_\eps^*$ is the dual graph). 
We let $\Omega_\eps=\Omega_\eps(U,u,h)$ be the set of all height functions (of five-vertex configurations)
having the same boundary values as $h$. Note that the paths start at specified boundary vertices and along specified
directions starting from these boundary vertices: these directions are determined by the heights on the adjacent faces.

On $\Omega_\eps$ we define the probability measure $\mu_\eps=\mu_\eps(r)$ 
choosing a configuration proportional to its weight,
which is the product of its vertex weights.
(The weight is not defined at a boundary vertex, where a lattice path begins or ends; these vertices
are given weight $1$, or equivalently, we take the product over weights of \emph{interior} vertices.) Note that
the weight of a configuration does not depend on $X$ or $Y$ since the number of horizontal
and vertical edges of the paths is fixed by the boundary data. It does, however, depend on $r$.

The following statement of the variational principle is due to \cite{CKP}, who originally applied it to the domino
tiling model (and lozenge tiling model). Their proof applies identically to the five-vertex model, with several provisos.
Firstly, the strict convexity of the surface tension function $\sigma$ is essential to get uniqueness.
In the $r<1$ case here we lose strict convexity, and so get a weaker statement.
Secondly, the explicit form of $\sigma$ is different in our setting. Indeed, the explicit form of $\sigma$ is the main contribution of the present paper. Finally, the argument of \cite{CKP} relies on an a priori estimate given in \cite{CEP}
(using Azuma's inequality) for the height fluctuations for a domain with linear boundary conditions; this argument
applies to general random tiling models and in particular also to the current setting.

\begin{theorem}[\cite{CKP}, Theorem 1.1]\label{ckpthm} Take a piecewise 
smooth Jordan domain $U$ and boundary height function $u$ as above,
satisfying the property that there is a feasible extension $h$ having gradient in $\N$. 
Then for any $\delta>0$, with probability tending to $1$
a rescaled $\mu_\eps$-random height $\eps h_\eps$ will lie within $\delta$ of the unique minimizer (if $r>1$)
or one of the minimizers (if $r<1$) of the
surface tension integral
$$\iint_U \sigma(\nabla h)\,dx\,dy,$$
where $\sigma$ is given in Proposition \ref{stension} for $r<1$ and \ref{stensionbig}) for $r>1$. 
\end{theorem}

For examples, see Figure \ref{bppexact} for an $r<1$ case and Figure \ref{bppbigrsim2} for an $r>1$ case.

Even though minimizers are not typically unique in the $r<1$ case, convexity of $\sigma$ implies that
the set of all minimizers is a convex set (the average of two minimizers is also a minimizer),
and moreover all minimizers $h$ agree on the ``repulsive'' region
where $\nabla h\in\N^*$, that is, if $h_1,h_2$ are two minimizers and $\nabla h_1\in\N^*$ on a subregion $U'\subset U$ then $h_2=h_1$ on $U'$. We call $U'$ the \emph{repulsive region}. See Figures \ref{bppexact}, \ref{bppbdy} for the repulsive region for the ``boxed plane partition'' boundary conditions of Figure \ref{bppr}, and see further discussion
in Section \ref{bppsection}.

\old{
There is another natural choice of boundary conditions, which we won't consider here: rather than fix the initial
\emph{directions} of the lattice paths in $U_\eps$, one can fix just their starting and ending vertices. 
A particular path has the freedom to take its first step either N or W, for example. In this case the number of 
vertices of type 5 and 6 is not fixed by the boundary conditions, 
so one can get a larger class of measures by considering different
values for $\eps_5$ and $\eps_6$. Since this changes the weight of a configuration by 
only a linear exponential factor (linear in the boundary length), however,
it is not hard to show that in the scaling limit this choice will not affect the limit shape.}

\section{Bethe Ansatz equations} \label{sec:bethe}

\subsection{Bethe ansatz for the five-vertex model}

Given $X$, $Y$, and $r$ as described in Section~\ref{sec:introduction} and another variable $h>0$, 
following \cite{nolden1992asymmetric} we define
the six-vertex model vertex weights
\[
(a_1, \ldots, a_6) = (1,\e^{-4h+X-Y},\e^{X},\e^{Y},r\e^{\frac12(X+Y)},r\e^{\frac12(X+Y)}). 
\]
Taking $h\to\infty$ yields \eqref{eq:efieldweights}.
We also define the quantities
\begin{align*}
  2\Delta&=\frac{a_1a_2+a_3a_4-a_5a_6}{\sqrt{a_1a_2a_3a_4}}=
           \e^{2h+Y}(1-r^2) + \e^{-2h-Y}\\
  H &= \e^{2h}.
\end{align*}

\insetfiguremanual[0.7\textwidth][0.26\textwidth][t]{%
 Consider the model \eqref{eq:basicweights} on the cylinder of height $M$ and circumference $N$, as shown in Figure~\ref{fig:cylinderpaths}. The states of this model are specified by the vertical edges, since the horizontal edges are uniquely determined by the requirement---implicit in the ice rule---that they connect up the vertical ones to form
 disjoint northwest-going paths from the bottom to the top of the cylinder.
  
The $2^N \times 2^N$ \textit{transfer matrix} $T$ is indexed by configurations of edges in each row, with $T(x,y)$ defined to be the product of Boltzmann weights of the vertices in a circle around the cylinder, with the configuration $x$ in the row below and $y$ in the row above it, with the horizontal edges filled in as necessary. If $x$ and $y$ are such that no configuration of horizontal edges in the circle would comply with the ice rule, we set $T(x,y) = 0$. If $x$ and $y$ are both the empty configuration, then there are two valid configurations of horizontal edges between them, and in that case we define $T(x,y)$ to be the sum of the product of Boltzmann weights over each such configuration.%
}{\includegraphics[width=2.5cm]{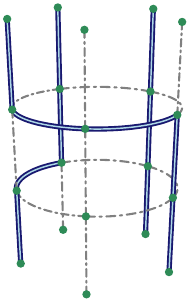}}
{
  \captionof{figure}{A five-vertex model on a cylinder with circumference $N = 5$}
  \label{fig:cylinderpaths}
}  
The transfer matrix is related in a straightforward manner to the partition function via $Z = \trace T^M$ \cite[Chapter 8]{baxter2016exactly}. 
Therefore, the $M\to\infty$ behavior of the partition function is governed by the largest eigenvalue 
$\Lambda=\Lambda_N$ of $T$. The limit $\lim_{N\to\infty} \frac1N\log\Lambda$ is the \emph{free energy}.
One could in principle get the same free energy from the simultaneous limit $M,N\to\infty$ of $\frac1{MN}\log Z$, see \cite{duminilcopin2016discontinuity} for a rigorous argument, but it is significantly easier to take first the $M\to\infty$ limit, then the $N\to\infty$ limit.

Since the number of vertical edges is constant from row to row, we may write $T$ in block diagonal form using $\binom{N}{n} \times \binom{N}{n}$ blocks $T_n = T_n(r,X,Y)$, which we define by restricting $T$ to 
configurations with $n$ vertical edges.
For fixed $n$, the $X$-dependence of $T_n$ is simply due to a factor $\e^{Xn}$, that is, $T_n$ is $\e^{Xn}$ times a matrix
independent of $X$. In the limit $N,n\to\infty$ with $n/N\to s$, we define the 
microcanonical free energy $F_m(s,Y)$ to be
$$F_m(s,Y) = -Xs+ \lim_{N\to\infty}\frac1N\log\Lambda_{n},$$
where $\Lambda_n$ is the leading eigenvalue of $T_n$.
Subtracting $Xs$ here removes the explicit dependence on $X$.
The leading eigenvalue of $T$ for fixed $N$ is obtained by maximizing over $n$ the leading eigenvalue of $T_n$. 
In the limit this corresponds to maximizing $F_m(s,Y)+sX$ over $s$, and we define the free energy 
$F(X,Y)$ as in (\ref{grandc}). 

We can calculate the $1 \times 1$ matrices $T_0$ and $T_N$ quite simply. For example, $T_0$ is a $1\times 1$ matrix with entry $a_1^N + a_4^N$, since the possibilities are that every vertex is type 1 or that every vertex is type 4. Therefore, its eigenvalue is
\[
  a_1^N + a_4^N = 1 + \e^{YN}. 
\]
If there are $N$ edges in each row, then every vertex must be of type 3. So the eigenvalue of $T_N$ is 
\[
  a_3^N = \e^{XN}. 
\]
If $n = 1$, then we can associate with each configuration $x$ the location $k$ of the occupied edge. By the rotational symmetry of the model, $T_1$ is a \textit{circulant} matrix, meaning that each row is obtained from the previous one by applying a one-entry circular shift. The eigenvalues of an $N\times N$ circulant matrix are given by \cite{davis2012circulant}
\[
\lambda_j = c_0 + c_1 \omega_j^{-1} + c_2 \omega_j^{-2} + \cdots + c_{N-1} \omega_j^{-N+1}\qquad (j = 0, \ldots, N-1),
\]
where $\omega_j = \exp(2\pi i j / N)$ is an $N$th root of unity and $(c_0, \ldots, c_{N-1})$ is the first row of the matrix. Substituting weights to find the entries, we get 
\[
\lambda_j = \e^X\left(1+r^2\left(\frac{\e^Y}{\omega_j}+\dots+\frac{\e^{(N-1)Y}}{\omega_j^{N-1}}\right)\right) = \e^{X}\left(1 + \frac{r^2\e^{Y}}{\omega_j-\e^{Y}}+\e^{YN}\frac{\omega_j r^2}{\e^{Y}-\omega_j}\right). 
\]
Now suppose $n>1$. The \textbf{Bethe ansatz} is the idea to look for eigenvectors of $T_n$ of the form
\begin{equation}  \label{eq:betheansatz} 
  f(k_1,\dots,k_n) = \sum_{\pi\in S_n} A_{\pi}\zeta_{\pi(1)}^{k_1}\dots \zeta_{\pi(n)}^{k_n},
\end{equation}
where the sum is over the permutation group $S_n$, the quantities $A_\pi, \zeta_1,\dots,\zeta_n$ are certain constants, and $1 \leq k_1 < k_2 < \ldots  < k_n \leq N$ are the positions of the occupied edges. Substituting \eqref{eq:betheansatz} into the eigenvector equation $T_nf = \lambda f$, one finds that there exist coefficients $A_\pi$ such that $f$ is indeed an eigenvector if the values $\zeta_1,\dots,\zeta_n$ satisfy the \textit{Bethe equations}, which are worked out in \cite{nolden1992asymmetric} for arbitrary $a_1,\dots,a_6$: 
\be\label{BE}
\zeta_i^N = (-1)^{n-1} \prod_{j=1}^n \frac{1+H^2 \zeta_i \zeta_j -2\Delta H \zeta_i}{1+H^2 \zeta_i \zeta_j -2\Delta H \zeta_j}. 
\ee
An underlying assumption that ensures that \eqref{eq:betheansatz} is non-zero is that the solutions $\zeta_j$ to \eqref{BE} have to be distinct.
For the general 6-vertex model, this system of coupled polynomial equations is difficult to work with.
Taking $h\rightarrow\infty$, however, the equations (\ref{BE}) yield 
\[
  \zeta_i^N = (-1)^{n-1} \prod_{j=1}^n \frac{1 -(1-r^2)\e^{Y} \zeta_j^{-1} } {1-(1-r^2 )\e^{Y}\zeta_i^{-1}},
\]
or equivalently
\begin{equation} \label{eq:BE}
\zeta_i^{N-n} ((1-r^2 )\e^{Y} - \zeta_i)^n = - \prod_{j=1}^n (1 -(1-r^2)\e^{Y} \zeta_j^{-1}) .
\end{equation}
This system has an advantage over (\ref{BE}) since the right-hand side is symmetric in all $\zeta_j$,
and the left-hand side depends only on a single root $\zeta_i$,
so the system ``decouples'': see the next section.
 
The eigenvalue corresponding to the eigenvector $f$, expressed in terms of the corresponding solution of \eqref{eq:BE}, is 
(see equation 2.6 in \cite{nolden1992asymmetric})
\be\label{eq:evalue}
\Lambda :=\e^{Xn}\left[\prod_{j=1}^n\left(1+\frac{r^2\e^{Y}}{\zeta_j-\e^{Y}} \right) + \e^{YN} \prod_{j=1}^n \frac{r^2}{\e^{Y}\zeta_j^{-1}-1}\right].
\ee

It is an interesting and difficult problem to show that all eigenvectors of $T_n$ have this form. 
However we are only concerned in this paper with the maximal eigenvalue; Section \ref{perron} below completes the proof that the above is a correct expression for the leading eigenvalue.

\subsection{Leading eigenvalue}

We assume  throughout the paper that $r \neq 1$. 
We are interested in the limit as $n,N\to\infty$ of the solution of \eqref{eq:BE} and the 
eigenvalue \eqref{eq:evalue} with edge density $n/N$ tending to a constant $0 \leq s \leq 1$. 
Define $w_j = \frac{\zeta_j}{(1-r^2)\e^Y}$. In terms of the $w_j$'s, the Bethe equations \eqref{eq:BE} become
\begin{equation} \label{eq:Adef}
  w_j^{N-n} (1-w_j)^{n} = -((1-r^2)\e^{Y})^{-N} \prod_{k=1}^n \frac{w_k-1}{w_k}.
\end{equation}
Define $\mathbf{w} \colonequals (w_1, \ldots, w_n)$.  
We define $A(\mathbf{w})$ to be the right-hand side of (\ref{eq:Adef}):
\be\label{Adef2}A(\mathbf{w}) = -((1-r^2)\e^{Y})^{-N} \prod_{k=1}^n \frac{w_k-1}{w_k}.\ee
Since $A(\mathbf{w})$ is symmetric in the $\{w_j\}$, we see that there is an equation of the form $w^{N-n}(1-w)^n = y$, where $y \in \C$, which is satisfied by $w_j$ for all $1 \leq j \leq n$. 

Curves $\CO_{a,b,c}$ of the form $a \log |w| + b \log |1-w| = c$, where $a$ and $b$ are positive, are called \textbf{Cassini ovals} (see Figure~\ref{fig:cassini}).  Note that solutions of the equation $w^{N-n}(1-w)^n = y$ lie on the Cassini oval $\CO_{a,b,c}$
with $a = N-n$, $b = n$, and $c = \log|y|$.

\begin{figure}
\centering
\includegraphics{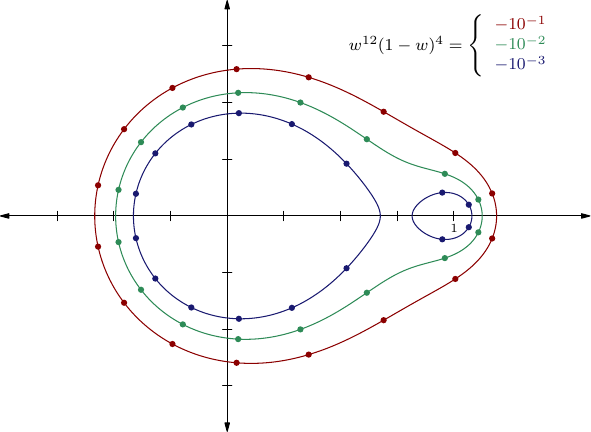}
\caption{Solutions to $|w^{12}(1-w)^{4}| = |y|$ are ``Cassini ovals'': these are the solid curves shown
for $y \in \{-10^{-1}, -10^{-2}, -10^{-3}\}$ . The dots on each curve are the solutions to the polynomials $w^{12}(1-w)^{4} = y$.}\label{fig:cassini}
\end{figure}
The eigenvalue \eqref{eq:evalue} may be expressed in terms of the $w_j$'s as 
\begin{align}\label{eq:eval1}
  \Lambda = \Lambda(\mathbf{w}) &= \e^{Xn}(1-r^2)^n\left[\prod_{j=1}^n \frac{1-w_j}{1-(1-r^2) w_j}  + \e^{YN}\prod_{j=1}^n \frac{r^2w_j}{1-(1-r^2)w_j}  \right]
\\
\label{eval2}&= \e^{Xn}(1-r^2)^n\e^{YN}\left[1-(-1)^nyr^{-2n}(1-r^2)^N  \right]
\prod_{j=1}^n \frac{r^2w_j}{1-(1-r^2)w_j}
\end{align}
where $y=A(\mathbf{w})$ is given in (\ref{Adef2}).

Since $\Lambda>0$ for the leading eigenvalue (by Perron-Frobenius), we may take the absolute
value of the right-hand side.  
\begin{equation}
\label{Lambdaabs}
\Lambda = \e^{Xn}|1-r^2|^n\e^{YN}\left|1-(-1)^nyr^{-2n}(1-r^2)^N  \right|
\prod_{j=1}^n \frac{r^2|w_j|}{|1-(1-r^2)w_j|}.\end{equation}

Our goal is to find $\Lambda$ as a function of $n,N$ and $Y$.  However
due to the implicit nature of the equations, it is convenient to
``work backwards'' and start with $n,N$ and $y$ which define the
Cassini oval. We will then reconstruct $Y$ as a function of $y$, and
\emph{a posteriori} show that, for fixed $n/N$, the map $y\to Y$ is
monotone bijective on the relevant ranges. For fixed $N,n$ and $r<1$, the quantities $Y,y$ are in fact \emph{not} monotonically related over their \emph{full} range, see Figure \ref{noseplot}. 
However for large $N$ we will show (Lemma \ref{largeY}) that the relevant part of the range will be
$Y\le-\log(1-r^2)$, where Lemma \ref{mono} shows that $Y$ and $y$ are indeed monotonically related. 

If $r>1$, then $y,Y$ are monotonically related over their full range, see discussion in Section \ref{onecpt}.

\subsection{Cassini oval properties}

Define the polynomial 
\begin{equation} \label{eq:wpoly} 
  p(w) = w^{N-n}(1-w)^{n}. 
\end{equation} 
In the following lemma we record some basic facts about the level sets of $p$ and $|p|$. 
\begin{lemma} Equations of the form $p(w) = y$ have the following properties: \label{Cassiniprops}
  \begin{enumerate}[(i)] 
 \item  When $|y|<\frac{n^n(N-n)^{N-n}}{N^N}$, the Cassini oval $|p(w)| = |y|$ consists of two components---one of which surrounds the origin and the other of which surrounds $1\in \C$---and the equation $p(w) = y$ has $N-n$ solutions on the former and $n$ solutions on the latter.
 \item When $|y|>\frac{n^n(N-n)^{N-n}}{N^N}$, the Cassini oval $|p(w)| = |y|$ is a simple closed curve surrounding both 0 and $1$, on which lie all $N$ solutions of the equation $p(w) = y$. The rightmost $n$ roots are separated from the rest by the curve
$\frac{\arg(w)}{\arg(1/(1-w))}=\frac{n}{N-n}$.
 \item Any circle centered at a point of $(-\infty,0]\cap[1,\infty)$ intersects a Cassini oval in at most two points. 
 \end{enumerate}
\end{lemma}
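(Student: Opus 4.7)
The plan is to analyze the level sets of $|p|$ via a Morse-theoretic study of its critical points, then count the solutions of $p(w)=y$ on each oval by viewing $p$ restricted to the oval as a smooth covering of a circle and using the argument principle.

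First I would identify the critical points of $|p|^2=p\bar p$: since $p$ is holomorphic these are precisely the zeros of $p$ and of $p'$. A direct computation gives
\[
 p'(w)=w^{N-n-1}(1-w)^{n-1}\bigl((N-n)-Nw\bigr),
\]
so the unique critical point of $|p|$ in $\C\setminus\{0,1\}$ is $w_*:=(N-n)/N$, with $|p(w_*)|=c_*:=n^n(N-n)^{N-n}/N^N$; one checks $p''(w_*)\neq 0$, so this is a nondegenerate Morse saddle of $|p|^2-c_*^2$. At $w=0$ and $w=1$, $|p|^2$ has (possibly degenerate) local minima, and $|p(w)|\to\infty$ as $|w|\to\infty$. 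Standard sublevel-set analysis then yields: for $0<c<c_*$, $\{|p|\le c\}$ is a disjoint union of two topological disks $\overline{D_0}$ and $\overline{D_1}$ containing $0$ and $1$; at $c=c_*$ they merge through $w_*$; for $c>c_*$, $\{|p|\le c\}$ is a single topological disk containing $\{0,1,w_*\}$. Since $\nabla|p|^2$ vanishes only on the critical set, each nonsingular level $\{|p|=c\}$ is a smooth $1$-manifold, consisting of two or one simple closed curves as claimed.

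To count roots of $p(w)=y$, observe that every solution satisfies $|p(w)|=|y|$, so all $N$ roots of $p-y$ lie on the Cassini oval. On each component $C$, $p'$ is nonvanishing (the critical points $0,1,w_*$ lie strictly inside appropriate disks), so $p|_C$ is a smooth covering onto the circle $\{|z|=|y|\}$; its degree equals the winding of $p(w)$ around $0$ as $w$ traverses $C$, which by the argument principle is the number of zeros of $p$ enclosed with multiplicity: $N-n$ for the component around $0$, $n$ for the component around $1$, and $N$ for the combined oval when $|y|>c_*$. For the separation claim in (ii), the locus $\arg(w)/\arg(1/(1-w))=(N-n)/n$ coincides with $\{q(w)\in\R_+\}$ for $q(w):=w^n(1-w)^{N-n}$; an analogous Morse analysis of $q$ gives an interior saddle at $n/N$ with the same critical value $c_*$. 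The real segment $(0,1)$ is one branch of $\{q\in\R_+\}$, and two additional analytic branches cross it transversally at $n/N$. I would verify that these branches meet any Cassini oval with $|y|>c_*$ in exactly two points, splitting the $N$ roots into the rightmost $n$ and the remaining $N-n$, by tracking the change of $\arg p$ along each of the two resulting arcs.

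Finally, for (iii), parametrizing the circle centered at $c\in(-\infty,0]\cup[1,\infty)$ by $w=c+Re^{i\phi}$ and differentiating yields
\[
 \frac{d}{d\phi}\log|p(w)|=-R\sin\phi\left[\frac{(N-n)c}{|w|^2}+\frac{n(c-1)}{|w-1|^2}\right].
\]
If $c\le 0$ both summands in the bracket are nonpositive, and if $c\ge 1$ both are nonnegative; in each case they are not simultaneously zero, since $c=0$ and $c=1$ cannot both hold. Hence the only zeros of $d\log|p|/d\phi$ on the circle are $\phi=0,\pi$, so $\log|p|$ is strictly monotone on each of the two open arcs and attains any value at most twice. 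The main obstacle is the separation statement in (ii): parts (i), the root counts in (ii), and (iii) all follow uniformly from the Morse picture and the argument principle, but cleanly identifying the two intersections of the auxiliary curve $\{q\in\R_+\}$ with the Cassini oval, and matching them to the rightmost $n$ roots, requires more careful global control of this auxiliary level set.
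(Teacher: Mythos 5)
Your approach differs from the paper's in a useful way and is mostly complete. For (i) and the root count in (ii), the paper simply declares these ``trivial''; you instead give an actual argument via Morse theory of $|p|^2$ (the unique saddle at $w_*=(N-n)/N$ with critical value $c_*=n^n(N-n)^{N-n}/N^N$, plus the degenerate minima at $0$ and $1$), and then count roots on each oval component by noting that $p$ restricted to that component is a covering of the circle $\{|z|=|y|\}$ whose degree, by the argument principle, equals the number of enclosed zeros. This is correct and self-contained. For (iii) your argument is genuinely different from, and in my view cleaner than, the paper's: you compute
\[
\frac{d}{d\phi}\log|p(c+R\e^{i\phi})|=-R\sin\phi\left[\frac{(N-n)c}{|w|^2}+\frac{n(c-1)}{|w-1|^2}\right],
\]
observe the bracket has fixed nonzero sign when $c\le 0$ or $c\ge 1$, and conclude $\log|p|$ is strictly monotone on each open semicircle, hence attains any value at most twice. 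The paper instead shows that circle--oval intersections at nonreal points are always transversal, via a discriminant computation for a quadratic in $z$, and leaves implicit the step that this transversality forces exactly the monotonicity you establish directly. Note also a typo in the lemma statement: $(-\infty,0]\cap[1,\infty)$ should be $\cup$; both your proof and the paper's treat $c\notin(0,1)$.

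The one genuine gap is the one you flag yourself: the separation statement in (ii), that the rightmost $n$ roots are exactly those cut off by the locus $n\arg w+(N-n)\arg(1-w)=0$, equivalently $q(w)=w^n(1-w)^{N-n}\in\R_+$. Your identification of this locus and of the saddle of $q$ at $n/N$ with the same critical value $c_*$ is correct, but you do not establish the global shape of the branch of $\{q\in\R_+\}$ leaving $n/N$ into the upper half plane, nor that it meets a single-component oval in exactly one point that separates the $n$ rightmost roots from the rest. The paper's own treatment is also terse, but it takes what is probably the shorter route to close this: it uses the principal branch of $g(w)=w^{(N-n)/n}(1-w)$, for which $g^n=p$, so $g$ ``unwraps'' the oval onto a circle and the separating curve becomes the preimage of the ray through $-|y|$, with the rightmost $n$ roots being those mapping to the $n$th roots $y^{1/n}\e^{2\pi ij/n}$ of $y$. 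If you want to make your write-up rigorous, following that unwrapping rather than directly controlling the level set of $q$ is likely the cleaner path, since it turns the separation claim into a statement about arcs of a circle.
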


\begin{proof}
For the first parts of (i) and (ii), note that $|w^{N-n}(1-w)^n|$ has two local minima, at $0$ and $1$, 
and one saddle point at $w$ where $\frac{N-n}{w}-\frac{n}{1-w}=0$, that is, $w=(N-n)/N$, where it has value
$\frac{n^n(N-n)^{N-n}}{N^N}.$ The second part of (i) follows by continuity of the roots from the small $y$ case.
For the second part of (ii), note that the principal branch of the map $w\mapsto w^{(N-n)/n}(1-w)$ on $\C\setminus(-\infty,0)$ 
maps the roots of $p(w)-y$ to the circle around $0$ of radius $|y|^{1/n}$, and it maps the right-most $n$ roots to 
$y^{1/n}\e^{2\pi ij/n}$. If there is one component, the rightmost preimage of $-|y|^{1/n}$ under this map is the intersection of the oval with the curve 
$\frac{\arg(w)}{\arg(1/(1-w))}=\frac{n}{N-n}.$ 

For (iii), we first show that the oval and the circle intersect transversely
at any nonreal point $z$ of intersection. Suppose a circle is centered at $c\in\R$ and consider 
the Cassini oval $\CO_{\alpha,\beta,\gamma}$. 
As $z$ moves along  the oval, we have (differentiating $\alpha\log|z|+\beta\log|1-z|$ with respect to $z$) 
$$\Re\left[\left(\frac{\alpha}{z}-\frac{\beta}{1-z}\right)dz\right]=0.$$
At a point $z$ of non-transversal intersection, we would 
have additionally $\Re\left[\frac{dz}{z-c}\right]=0$. These two equations would imply
that $dz=0$ unless
$\frac{\alpha}{z}-\frac{\beta}{1-z}$ and $\frac{1}{z-c}$ point in the same direction,
that is, unless their ratio is real:
$$\left(\frac{\alpha}{z}-\frac{\beta}{1-z}\right)(z-c)\in\R.$$
However we claim that this quantity can be real (for $z\not\in\R$)
only if $c\in[0,1]$. To see this, for real $t$, note that the equation
$(\frac{\alpha}{z}-\frac{\beta}{1-z})(z-c)-t=0$ has two roots
$z_1,z_2$.  The discriminant of this quadratic is
$$-4 c \alpha (-t + \alpha + \beta) + (t - \alpha - c \alpha -c \beta)^2$$ which is minimized at $t=\alpha - \alpha c + \beta c$ and takes value $4 \alpha\beta (-1 + c) c$ there,
which is positive for $c\not\in(0,1).$ Thus $z_1,z_2$ are real for $c\not\in(0,1)$.

Thus the oval and the circle intersect transversely
at any nonreal point $z$ of intersection. Fix the circle center $c>1$ and increase its 
radius $t$ continuously from zero (the $c<0$ case is analogous).
By the above, the number of intersection points with the oval only 
changes when the circle passes one of the \emph{real} points of the oval.
The increasing circle will first intersect the right-most point of the oval, which is closer to $c$ than the other real points. If the oval has one component, the number of intersections only changes again when the circle passes through the left-most point of the oval, where the number of intersections drops to zero (if it went to up to four or more, increasing the radius further would lead to a tangency, contradicting the transversality).  
If the oval has two components, the next intersection of the circle is with the left real point of the right component,
and the number of intersections with the circle then drops to zero, until the circle grows to hit the left component
(at its right-most point). In all cases there are at most two intersections. 
\end{proof}

\subsection{Consistency equations}

Let's consider the consistency equations $p(w_j) = A(\mathbf{w})$, $1 \leq j \leq n$. If $\mathbf{w}$ is a solution to this system, then
\[
1=\left|\frac{A(\mathbf{w})}{p(w_j)}\right| = \frac{\left|\frac{1}{(1-r^2)\e^{Y}}\right|^N\displaystyle{\prod_{j=1}^n\frac{|w_j-1|}{|w_j|}}}{\displaystyle{\prod_{j=1}^n|w_j|^{(N-n)/n}|1-w_j|}}=\left(\frac{1}{|1-r^2|\e^{Y}}\right)^N\prod_{j=1}^n|w_j|^{-N/n}. 
\]
Taking the logarithm of both sides and dividing by $N$ gives the equation
\begin{equation} \label{eq:cdef} 
  0 = -\log(|1-r^2|\e^{Y})  - \frac{1}{n}\sum_{j=1}^n \log|w_j|.
\end{equation}
Then \eqref{eq:cdef} can be substituted into \eqref{Lambdaabs} to yield 
\begin{equation}
  \label{eq:eval3}
  \Lambda(\mathbf{w}) = \e^{Xn}\e^{Y(N-n)}\left|1-(-1)^nA(\mathbf{w})r^{-2n}(1-r^2)^N  \right|
  \prod_{j=1}^n \frac{r^2}{|1-(1-r^2)w_j|}.
\end{equation}
We can see that, for fixed $y=A(\mathbf{w})$, to maximize $\Lambda$ we must minimize the moduli of $1-(1-r^2)w_j$. 
If $r< 1$, this means (by part (iii) of the lemma) that $\Lambda$ is maximized when $w_1, \ldots w_n$ are distinct roots of $p(w) = y$ with  maximal real part. 
When $r > 1$, $\Lambda$ is maximized when $w_1, \ldots w_n$ are distinct roots of $p(w) = y$ with \textit{minimal} real part. We treat these two cases separately. 

\begin{lemma}
 \label{lem:y<0}
 Suppose the Cassini oval has one component. Then (for the maximal eigenvalue) $y<0$.
\end{lemma}

\begin{proof}
We suppose $r<1$. The case $r>1$ is symmetric (changing `largest' to `smallest').
By the uniqueness of the maximal eigenvalue, we must have $y\in\R$: otherwise
taking the $n$ largest roots $w_j$ for $y$ and $\bar y$ would give us two distinct complex conjugate 
leading eigenvectors.
The roots $w_j$ satisfy
$w^{(N-n)/n}(1-w)=y^{1/n}\sigma$, where $\sigma$ is an $n$th root of $1$. 
If $y>0$ and $n$ is even, the root $w$ 
of $p(w)=y$ with largest real part is itself real, and the subsequent roots $w_j$ of smaller real parts come in complex conjugate pairs; thus the $2j$th and $2j+1$-st roots with largest real part have the same modulus, but we can't pick an even number of these with largest modulus. This contradicts the unicity of the maximum eigenvalue.  
Similarly if $y>0$ and $n$ is odd,
the largest roots are nonreal, and come in complex conjugate pairs. We can't pick an odd number of these of maximal real part, again contradicting the unicity of the maximum eigenvalue.
If $y<0$, however, for $n$ even the roots $w$ with largest real part all come in complex conjugate pairs (as in Figure \ref{fig:cassini}), and for $n$ odd the largest root is real and the remaining ones come in complex conjugate pairs, so there is
a unique choice of $n$ largest roots.
\end{proof}

\section{Maximal eigenvalue: the $r<1$ case} \label{Max}

\subsection{Large $Y$ case}

Let us first consider the case of large $Y$, without fixing $n$.

\begin{lemma}\label{frozen} Suppose that $Y>X$ and $Y\ge -\log(1-r^2)$. Then the
measure on configurations in the limit $N\to\infty$ is supported on a single configuration consisting
of all horizontal edges. Likewise if $X>Y$ and $X\ge -\log(1-r^2)$, in the $N\to\infty$ limit
the system consists in a single
configuration of all vertical edges.
\end{lemma}

\begin{proof} Consider the model on a finite cylinder of circumference $N$ and height $M$, with free boundary conditions.
Suppose some horizontal edge is not in the configuration. The vertex to its right either 
has an edge north out of it or does not have an edge east out of it (or both of these conditions happen). Likewise from a vertical edge in the configuration, its upper vertex has an edge north out of it or no edge east out of it. Starting from a missing horizontal edge on the bottom row of the cylinder we can make a NE lattice path of ``defects" (referred to as steps below) consisting of these missing horizontal edges and present vertical edges, where we never take two consecutive vertical steps.  A general 5-vertex configuration can be considered as a  disturbance of the configuration with all horizontal edges by a family of such edge-disjoint defect paths.

Each such defect path is a concatenation of a sequence of subpaths consisting of either a horizontal step or a vertical step followed by a horizontal step. Compared to the underlying weight of the all-horizontal configuration which it displaces, a horizontal step has weight $e^{-Y}$ and a vertical-followed-by-horizontal
step has weight $r^2e^{X-Y}$. The effective relative weight of all defect paths starting from a horizontal 
non-edge 
is then bounded above by $(e^{-Y}+r^2e^{X-Y})^K=\lambda^K,$ where $K$ is the number of horizontal steps, and $\lambda<1$ because of the assumptions that $Y>X$ and $Y\ge -\log(1-r^2)$.
Note that starting from a horizontal non-edge on the bottom row, a defect path has length $K\ge M$. 

Thus each such defect path will be exponentially suppressed:
replacing each path with a path of non-defects increases the weight of a configuration by a quantity exponential in $M$. In other words, for $Y \ge -\log(1-r^2)$ the entropic contribution of there being more defective configurations than the single all-horizontal configuration can be ignored and the non-defective configuration has exponentially greater weight than the sum of weights of all possible defective configurations having a given defect edge. 

If a configuration has more than one defect path, we must take into account the change in weight when two defect paths share one or more vertices. At each vertex of intersection where two defect paths meet, the  
local contribution $e^{-Y}+r^2e^{X-Y}$ from the upper path is replaced by the single quantity
$e^{X-Y}$. Note that $e^{X-Y}<1$; we can combine this factor with the factor $e^{-Y}+r^2e^{X-Y}$ from the lower path, giving the configuration a local combined contribution of $(e^{X-Y})(e^{-Y}+r^2e^{X-Y})<\lambda$ to the relative weight. 
In particular giving relative weight $\lambda^{1/2}$ instead of $\lambda$ to each horizontal step of each defect path, we can still upper
bound the relative weight of multiple paths by the product of $\lambda^{K/2}$ over each path.

In conclusion, the \emph{union} of all configurations with one or more defect paths has exponentially (in $M$) small weight (and hence probability) compared with the weight of the all-horizontal configuration. 
\end{proof}

\subsection{Cassini oval two component case}

\begin{figure}[htbp]
  \centering
  \includegraphics[width=14cm]{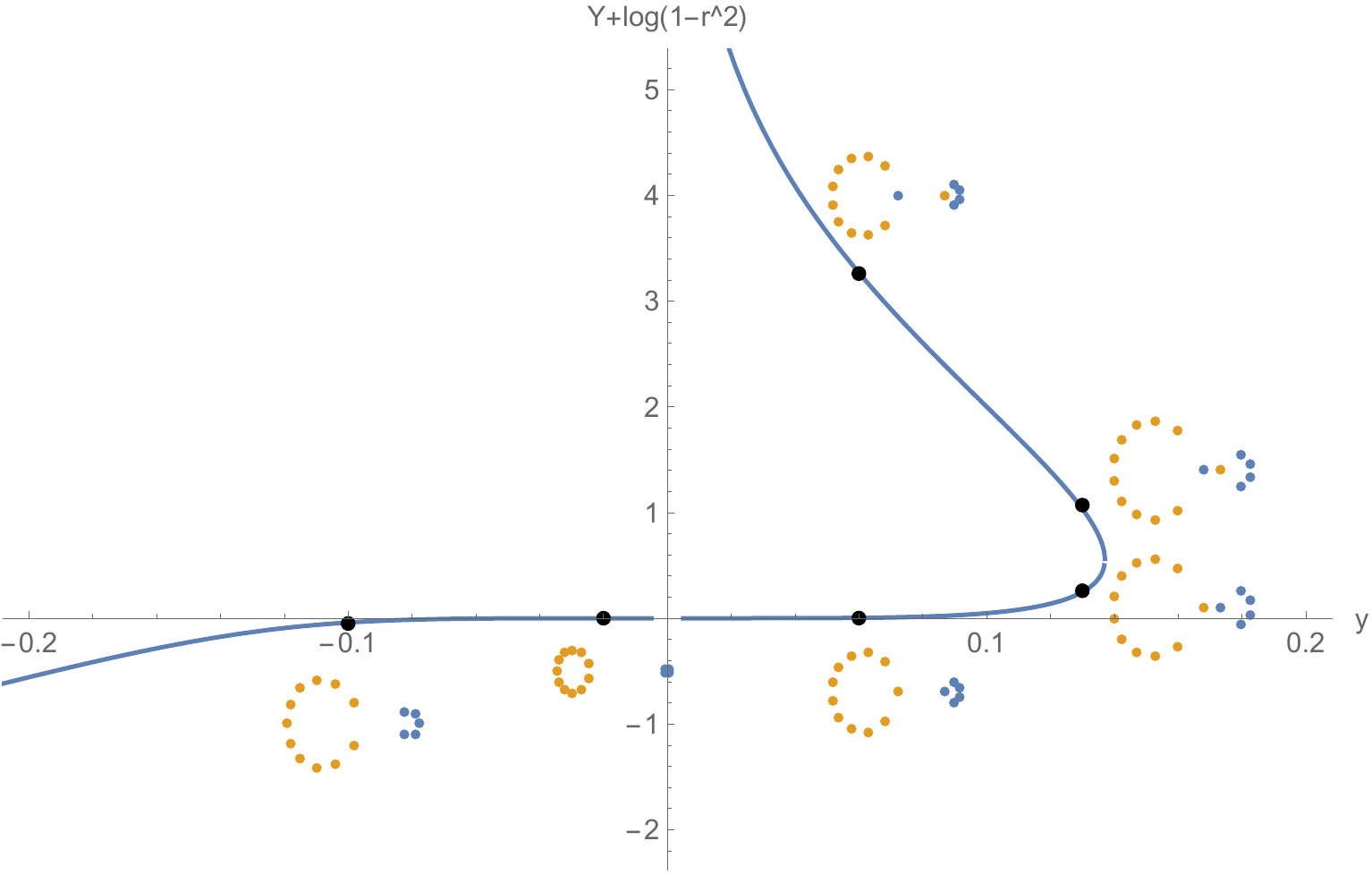}
  \caption{\label{noseplot}A plot of $\text{sign}(y)|y|^{1/n}$ versus $Y+\log(1-r^2)$ for $N=16,n=5$ and $r<1$.
At each black sample point there
is an insert showing the corresponding roots $w_j$ (blue) and other roots (yellow) of the Bethe equation $p(w)=y$.
Note that along the upper branch of the curve, a blue and yellow root have changed places, compared to the lower branch at the same $y$ value.
The upper branch consists of values of $Y+\log(1-r^2)$ which are unobtainable using the $n$ right-most roots, for any $y$, but can be obtained by taking the $n-1$ rightmost roots of the small component of the Cassini oval, and the right-most
root of the left component.}
  \label{dense}
\end{figure}

We now consider the case of $n$ fixed, and
where the Cassini oval has two components. By Lemma \ref{Cassiniprops}(i),
this occurs for small $|y|$. 
Recall that when the Cassini oval has two components, the $n$ roots of $p(w)=y$ of maximal real part are exactly those on the component
surrounding $1$. 

As discussed in Figure \ref{noseplot}, there are two possible situations. In the first, $w_1,\dots,w_n$ are the
roots on the right-most component, that is, those of maximal real part.
We will show that in this case $Y$ tends to a fixed value in the limit $N\to\infty$, namely, 
$Y=-\log(1-r^2)$. In the second case, $y>0$, and $w_1,\dots,w_{n-1}$ are the right-most $n-1$ roots on the 
right component, and $w_n$ is the right-most root on the left component. In this case in the limit of large $N$
$Y$ tends likewise to 
$$Y=-\log(1-r^2)+\log|w_n| - \log|w_n^*|\ge -\log(1-r^2)$$
where $w_n^*$ is the left-most root on the right component. By Lemma \ref{frozen} this second case is irrelevant,
since for these values of $Y$ the system is frozen. This solution of the Bethe equation does not correspond to the maximal eigenvalue.

Let us evaluate (\ref{eq:cdef}) in the first case. Let $C_\rho$ be a circle around the origin of radius $\rho$, where $\rho$ is chosen so that one oval is enclosed and one is in the exterior. By Lemma \ref{lem:Mahler},
\begin{equation}
\label{q1}
  -\log((1-r^2)\e^{Y}) = \frac1n\left(\frac1{2\pi}\int_{0}^{2\pi}\log|p(\rho \e^{i\theta})-y|\,d\theta-(N-n)\log\rho\right).
\end{equation}
Recall that $|p(z)|= |y|$ for $z$ on the Cassini oval. For large $N,n$, the quantity
$|p(z)|$ is constant on the Cassini ovals $\CO_{N-n,n,*}$ in the same family as $|y|$ varies, but increases rapidly as $|y|$ increases. 
In particular $|p(\rho \e^{i\theta})|\gg |y|$ on 
the circle $C_\rho$, so the integrand is well approximated 
by $\log|p(\rho \e^{i\theta})|$. Plugging in $p(w)=w^{N-n}(1-w)^n$ and ignoring the $y$, the right-hand side vanishes to order $o(1)$, giving 
$-\log((1-r^2)\e^{Y})=o(1),$
or, $Y=-\log(1-r^2)+o(1)$. 

To compute the eigenvalue \eqref{eq:eval3} we need to compute 
\begin{equation}\label{denomprod}\prod_{i=1}^n
  \left(1-(1-r^2)w_i\right).\end{equation}
 
The map $w\mapsto 1-(1-r^2)w$ takes points of the oval outside of $C_\rho$ to points \emph{inside}
the disk $C_R$ at $0$ of radius $R=|1-(1-r^2)\rho|$. 

In terms of the polynomial $p(w)$ these points are roots of the monic polynomial
$$(1-r^2)^N\left(p\left(\frac{1-u}{1-r^2}\right)-y\right).$$ 
Its constant coefficient is 
$$c_0=(1-r^2)^N\left(p\left(\frac1{1-r^2}\right)-y\right) = (-1)^nr^{2n}-y(1-r^2)^N$$
which is (up to sign) the product of the $N$ roots.

Thus using Lemma \ref{lem:Mahler}
(with $u=R\e^{i\phi}$)
the log of the product in (\ref{denomprod}) is
\begin{equation}
\label{dp2}
\log\left(\prod_{i=1}^n\left(1-(1-r^2)w_i\right)\right)=
\log|c_0|+n\log R-\frac1{2\pi}\int_{0}^{2\pi} \log\left|(1-r^2)^N\left(p\left(\frac{1-u}{1-r^2}\right)-y\right)\right|\,d\phi.
\end{equation}

Again $y$ is negligible, so this reduces to
\begin{align}
=&\nonumber\log|c_0|+n\log R-\frac1{2\pi}\int_{0}^{2\pi} \log\left|(1-r^2)^N\left(\frac{1-u}{1-r^2}\right)^{N-n}\left(\frac{r^2-u}{1-r^2}\right)^n\right|\,d\phi + o(N).\\
=&\label{mid}\log|c_0|+n\log R-\frac1{2\pi}\int_{0}^{2\pi}(N-n)\log|1-u|+n\log|r^2-u|\,d\phi+ o(N)\\
=&\nonumber\log|c_0|+ o(N), 
\end{align}
where we used $r^2<R<1$ in the last equality.

Plugging in to (\ref{eq:eval3}) yields 
\begin{prop}\label{largeY} Fix $r<1$ and suppose the Cassini oval has two components. Then as $n,N\to\infty$ with ratio tending to $s$, we necessarily have $Y=-\log(1-r^2)$ and
$\lim_{n\to\infty}\frac1N\log\Lambda = Y(1-s)+Xs.$
\end{prop}

\subsection{Cassini oval one component case}

Fix $y$, and define a positive number $\rho$ so that the circle $C_\rho$ of radius $\rho$ centered at the origin surrounds $N-n$ of the roots of $p(w)-y$ and passes through none of the roots. Define $w_0$ to be the intersection point
in the upper half plane 
of $C_\rho$ and the Cassini oval $|p(w)|=|y|$. Then, applying Lemma~\ref{lem:Mahler} to \eqref{eq:cdef}, up to terms tending to zero as $N\to\infty$, (\ref{q1}) gives
\[
  -\log(|1-r^2|\e^{Y})= -\frac{N-n}{n}\log|w_0|+\frac{\arg w_0}{n\pi}\log|y| + \frac1{2\pi n}\int_{\arg w_0}^{2\pi-\arg w_0}\log|p(\rho \e^{i\theta})| d\theta, 
\]
and plugging in $p(w)=w^{N-n}(1-w)^n$ and $|y|=|p(w_0)|$ and simplifying  
\begin{equation}\label{eq:rfromw}
  -\log(|1-r^2|\e^{Y})=\frac{\arg w_0}{\pi}\log|1-w_0|+\frac1{\pi}\Im\Li(w_0).
\end{equation}

The right-hand side of \eqref{eq:rfromw} reoccurs many times later so we define the function
\begin{equation}
  \label{Bdef}
  B(z) = \frac1{\pi}(\arg z\log|1-z|+\Im\text{Li}(z)).
\end{equation}
See the appendix for some of its properties, as well as properties of the dilogarithm $\text{Li}$.

Note that as $w_0$ tends to a point in $(0,1)$, the Cassini oval develops a pinch point, and $B(w_0)$ tends to $0$, which is consistent
with the two-component case.

\subsection{Relating $y$ to $Y$, one component case}

The point $w_0$ of the previous section plays an important role in the rest of the paper; it is a \emph{conformal coordinate} for the model.
Equation \eqref{eq:rfromw} relates $Y$ to a certain function of $w_0$, which itself is defined from $s=n/N$ and $y$. 
We show in this section that 
$w_0$ (and hence $y$) can be determined from $Y$ and $s=n/N,$ and conversely,
$w_0$ determines $Y$ and $s=n/N$. Moreover $y$ and $Y$ are monotonically related.

Consider, for given $n,N$, the triangles with vertices $0,1,w$ with $w$ in the upper half plane, which have angles at $0$ and $1$ with 
ratio $\frac{n}{N-n} = \frac{s}{1-s}$.
These points $w$ form a curve in $\H$.
As $0<s<1$ varies these curves foliate the upper half plane, see Figure \ref{twoanglefoliation}. 
\begin{figure}[htbp]
  \centering
  \includegraphics[width=7cm]{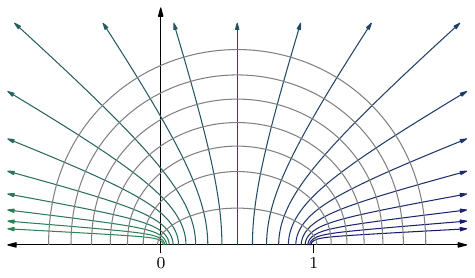}
  \caption{\label{twoanglefoliation}In blue/green, curves of the
    form $\displaystyle{(\arg w)/\arg \tfrac{1}{1-w}} = \tfrac{s}{1-s}$
    for various values of $s\in (0,1)$. The orthogonal family
    of gray curves shown is a collection of level curves of $B$.}
  \label{fig:foliation}
\end{figure}

Given a point $w$ on this curve, $w$ lies on a unique Cassini oval $\{w~:~|w|^{N-n}|1-w|^n=y\}$
with the property that the number of roots of  
$w^{N-n}(1-w)^n=y$ on the right side of the curve is $n$ (see Lemma \ref{Cassiniprops}(ii)).

Thus for any $w_0$ in the upper half plane there is a corresponding
choice of $n/N$: we have 
$$\frac{s}{1-s} = \frac{n}{N-n} =\frac{\arg(w_0)}{\arg(1/(1-w_0))}.$$

Conversely, given $s=n/N$, there is a curve $\frac{\arg(w)}{\arg(1/(1-w))}=\frac{s}{1-s}$, and on this curve
the values of $B(w)$ are monotone running from $0$ (on the real axis in $[0,1]$) to infinity 
(the orthogonality of the $B$ curves and the $s$ curves follows from explicit differentiation). 
Thus $w_0$ is defined
uniquely as a function of $s,Y$. Conversely $w_0$ uniquely determines $s,Y$. It is not hard to establish monotonicity of $Y$ for large finite $N,n$ as well:
\begin{lemma}\label{mono} Suppose the Cassini oval has one component and is not 
at the pinch point, that is, is a simple smooth curve. For sufficiently large $N,n$ and $r<1$, 
$Y$ is a monotone increasing function of $|y|$.
\end{lemma}

\begin{proof} Differentiating $p(w)=y$ we have
\be
  \left(N-n-\frac{nw}{1-w}\right)\frac{dw}{w} = \frac{dy}y\label{eq:2}
\ee 
which implies 
$$\frac{d\log |w|}{d\log y} =\Re\frac1{N-n-\tfrac{nw}{1-w}}.$$
The map $\psi(w)=w^{(N-n)/n}(1-w)$ maps the Bethe roots $w_1,\dots,w_n$ to the
consecutive $n$th roots of $y$. Let $z=\psi(w)$.
Then starting from (\ref{eq:cdef}),
\begin{align*}dY &= -\frac1n\sum_{j=1}^n \frac{dw_j}{w_j}\\
&= -\frac{dy}{yn}\sum_{j=1}^n\frac1{N-n-\tfrac{nw_j}{1-w_j}} \\
&\approx -\frac{dy}{yn}\frac{n}{2\pi i}\int\frac1{N-n-\tfrac{nw}{1-w}}\frac{dz}{z}\\
&= -\frac{dy}{yn}\frac{1}{2\pi i}\int_{\overline{w_0}}^{w_0} \frac{dw}{w}\\
&=-\frac{dy}{yn}\frac{\arg w_0}{\pi}.
\end{align*}
The approximation of the sum by the integral is valid for sufficiently large $n$, and $\arg w_0>0$ as long as we are not at the pinch point.
So with Lemma~\ref{lem:y<0} it follows that $dY/dy$ is always positive.
\end{proof}

We define $\theta$ so that the argument of $w_0$ is $s\theta$, see Figure \ref{wztriangles}.

\begin{figure}[htbp]
\center{\includegraphics[width=2in]{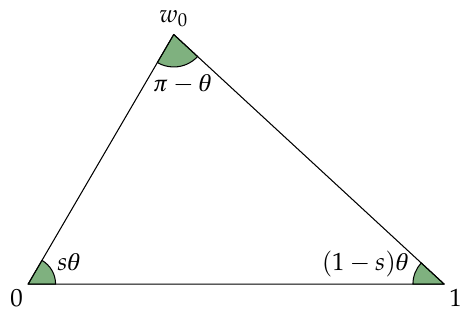}}
\caption{The relationship between $s$, $\theta$, and
  $w_0$. \label{wztriangles}}
\end{figure}

\subsection{The free energy, one component case} 

We can start from (\ref{dp2}).
We can evaluate the integral as follows. 
For points $u$ inside the image of the oval, the integrand is approximated by the quantity
$\log((1-r^2)^N|y|)$, and for points outside
the integrand is approximated by  
$$\log\left|(1-r^2)^Np\left(\frac{1-u}{1-r^2}\right)\right|=\log(|1-u|^{N-n}|r^2-u|^n).$$
The errors in these approximations are both $o(1)$ as $N\to\infty$
except when $u$ is within $o(1)$ of the oval; as such the errors
contribute at most $o(1)$ to the result.  Setting
\be\label{udef}u_0=R\e^{i\phi_0}=1-(1-r^2)\overline w_0
\ee
we have that (\ref{dp2}) is
{\footnotesize$$2n\log r+\log[1-(-1)^nyr^{-2n}(1-r^2)^N]+n\log
  R-\frac{\phi_0}{\pi}(N\log(1-r^2)+\log|y|)-
  \frac{N-n}{2\pi}\int_{\phi_0}^{2\pi-\phi_0}\log|1-u|d\phi -
  \frac{n}{2\pi}\int_{\phi_0}^{2\pi-\phi_0}\log|r^2-u|d\phi
$$}
and splitting up the $2n\log r$ term, 
{\footnotesize
$$=\log[1-(-1)^nyr^{-2n}(1-r^2)^N]+n\log R-\frac{\phi_0}{\pi}(-2n\log r + N\log(1-r^2)+\log|y|)
-\frac{N-n}{2\pi}\int_{\phi_0}^{2\pi-\phi_0}\log|1-u|d\phi - 
\frac{n}{2\pi}\int_{\phi_0}^{2\pi-\phi_0}\log|1-\frac{u}{r^2}|d\phi
$$
}
$$=\log[1-(-1)^nyr^{-2n}(1-r^2)^N]+n\log R-\frac{\phi_0}{\pi}(-2n\log r +N\log(1-r^2)+\log|y|)
-\frac{N-n}{\pi}\Im\Li(u_0) - 
\frac{n}{\pi}\Im\Li\left(\frac{u_0}{r^2}\right).
$$

Plugging in $\log|y|=(N-n)\log|w_0|+n\log|1-w_0|$ yields
\begin{multline*}
=\log[1-(-1)^nyr^{-2n}(1-r^2)^N]+n\log R-\frac{\phi_0}{\pi}((N-n)\log|(1-r^2)w_0|+n\log|\frac{1-r^2}{r^2}(1-w_0)|)\\
  -\frac{N-n}{\pi}\Im\Li(u_0)
-\frac{n}{\pi}\Im\Li\left(\frac{u_0}{r^2}\right)\end{multline*}
$$
=\log[1-(-1)^nyr^{-2n}(1-r^2)^N]-\frac{(N-n)}{\pi}(\phi_0\log|1-u_0|+\Im\Li(u_0))
  -\frac{n}{\pi}\left(\phi_0\log\left|1-\frac{u_0}{r^2}\right| + \Im\Li\left(\frac{u_0}{r^2}\right)\right)+n\log R.
$$

Using \eqref{eq:eval3} we get, in terms of the function $B(z)$ of (\ref{Bdef}),
\begin{equation} \lim_{N\to\infty}\frac1{N}\log\Lambda = sX+(1-s)Y+2s\log r + (1-s)B(u_0)+sB\left(\frac{u_0}{r^2}\right)-s\log|u_0|.
\end{equation}

Using the identity $B(z)-\log|z| = B(1-1/z)$ with $z=u_0/r^2$ gives the microcanonical free energy (\ref{microc}) to be
\begin{equation} F_m(s,Y) = (1-s)Y+(1-s)B(u_0)+sB\left(1-\frac{r^2}{u_0}\right).
\end{equation}

The calculation of the surface tension $\sigma(s,t)$ and free energy $F(X,Y)$, as discussed in (\ref{grandc}),
are performed in Section \ref{sec:legendre} below.

\section{Legendre transform, $r<1$ case}\label{sec:legendre}

The surface tension $\sigma(s,t)$ satisfies 
$$-\sigma(s,t) = F_m(s,Y)-Yt$$ 
where $t=\frac{dF_m}{dY}$.

Fixing $r, s$, we can parameterize $F_m(s,Y)$ explicitly in terms of $\theta$, where $s\theta=\pi-\arg(u_0-1),$ 
see Figure \ref{trianglefig}.
Then $Y$ is a function of $\theta$ given by \eqref{eq:rfromw}.

Using $Y=-\log(1-r^2)-B(w_0),$ we compute using (\ref{dB})
\begin{align*}\frac{dY}{d\theta} &= -\frac{dB(w_0)}{d\theta}\\
&=-\frac{s\theta}{\pi} \frac{d}{d\theta}\log|1-w_0| - (1-s)\frac{\theta}{\pi} \frac{d}{d\theta}\log|w_0|.
\end{align*}

Letting $\phi=\arg u_0$ we have
$$\frac{dB(u_0)}{d\theta} = \frac{\phi}{\pi}\frac{d}{d\theta}\log|1-u_0|+ \frac{s\theta}{\pi}\frac{d}{d\theta}\log|u_0|$$
and
\begin{align*}
\frac{dB(1-\frac{r^2}{u_0})}{d\theta} &= \frac{\phi}{\pi}\frac{d}{d\theta}\log|(u_0-r^2)/u_0|+
\frac{((1-s)\theta-\phi)}{\pi}\frac{d}{d\theta}\log|r^2/u_0|\\
&= \frac{\phi}{\pi}\frac{d}{d\theta}\log|u_0-r^2| -(1-s)\frac{\theta}{\pi}\frac{d}{d\theta}\log|u_0|.
\end{align*}

Then 
\begin{align*}\frac{dF_m(s,Y)}{d\theta} &= (1-s)\frac{dY}{d\theta}  +(1-s)\frac{dB(u_0)}{d\theta}+s\frac{dB(1-\frac{r^2}{u_0})}{d\theta}\\
&= (1-s)\frac{dY}{d\theta}  +(1-s)\frac{\phi}{\pi}\frac{d}{d\theta}\log|1-u_0| +\frac{s\phi}{\pi}\frac{d}{d\theta}\log|u_0-r^2|\\
&= (1-s)\frac{dY}{d\theta}  +(1-s)\frac{\phi}{\pi}\frac{d}{d\theta}\log\frac{|1-u_0|}{1-r^2} +\frac{s\phi}{\pi}\frac{d}{d\theta}\log\frac{|u_0-r^2|}{1-r^2}\\
&= (1-s)\frac{dY}{d\theta}  +\frac{\phi}{\pi}\left((1-s)\frac{d}{d\theta}\log|w_0| +s\frac{d}{d\theta}\log|1-w_0|\right)\\
&= \left(1-s-\frac{\phi}{\theta}\right)\frac{dY}{d\theta}.\\
\end{align*}

Thus 
$$t=\frac{dF_m}{dY} = \frac{dF_m}{d\theta}/\frac{dY}{d\theta} = 1-s-\frac{\phi}{\theta},$$
and so
$\phi = (1-s-t)\theta$.

Figure \ref{trianglefig} illustrates the relationship between $s$,
$t$, $u$, $r$, and $\theta$. 
\begin{figure}[htbp]
\center{\includegraphics[width=4in]{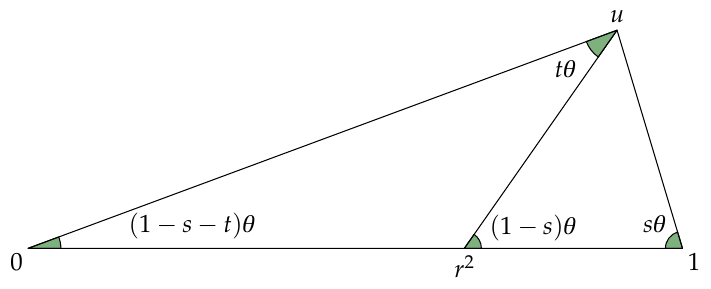}}
\caption{\label{trianglefig}Given $r$ and the ratios $[s:t:1-s-t]\in \N^*$, there is a unique $u=u_0$ in the upper half plane 
satisfying the property
that the angles indicated are in the ratio $[s:t:1-s-t]$. }
\end{figure}

Note that not all ratios $[s:t:1-s-t]$ are feasible: as $u_0$ ranges over the upper half plane, a short computation
shows that the values $(s,t)$ range over the subset $\N^*$ of $\N$ bounded by the axes and the hyperbola 
\begin{equation} 
\label{Nbound}
\left(\frac{1-r^2}{r^2}\right)st+s+t-1=0,
\end{equation}
see Figure \ref{hyperb}: the three intervals $u\in(1,\infty)$, $u\in(-\infty,0)$, $u\in(0,r^2)$ map respectively to the 
vertices $(s,t)=(1,0), (0,0)$ and $(0,1)$. For $r^2<u<1$, taking the limits as $\Im u\to0$ of the angle ratios, we have $s=\frac{u-r^2}{1-r^2}$ and $t=\frac{r^2(1-u)}{(1-r^2)u},$ which parameterizes the curved edge of $\N^*$.
\begin{figure}[htbp]
\center{\includegraphics[width=2in]{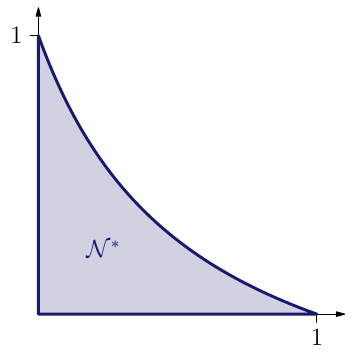}}
\caption{\label{hyperb}The region $\N^*$ is bounded by the axes and the hyperbola $(\frac{1-r^2}{r^2})st+s+t-1=0$ (shown here for $r=0.6$).
}
\end{figure}
When $r\to 1$ this hyperbola degenerates to the line $s+t=1$. 

For $(s,t)\in \N\setminus \N^*$, there is not a unique Gibbs measure with slope $(s,t)$; see the discussion after 
Proposition \ref{stension} below.

Recalling that $\overline u_0=1-(1-r^2)w_0$, we have determined the
relation between $w_0$ and $(s,t)$.  We already have the relation
between $Y$ and $w_0$, which is
$$Y=-\log|1-r^2|-B(w_0).$$
By symmetry 
$$X=-\log|1-r^2|-B(\bar z_0)$$
where $\bar z_0$ is defined as for $w_0$ but with the roles of $s$ and $t$ reversed.

Plugging in gives the surface tension and free energy: 
\begin{prop}
\label{stension}
The surface tension is given by
$$\sigma(s,t)  = (1-s-t)(\log(1-r^2)+B(w_0))-(1-s)B(u_0)-sB\left(1-\frac{r^2}{u_0}\right)$$
where $u_0$ is determined by $(s,t)$ implicitly as above, and $w_0=(1-\overline u_0)/(1-r^2).$
The free energy is given by 
$$F(X,Y) = -(1-s)Y-(1-s)B(u_0)-sB(1-\frac{r^2}{u_0})+sX.$$
\end{prop}

A plot of $\sigma$ is shown in Figure \ref{surftension}.
\begin{figure}[htbp]
\center{\includegraphics[width=4in]{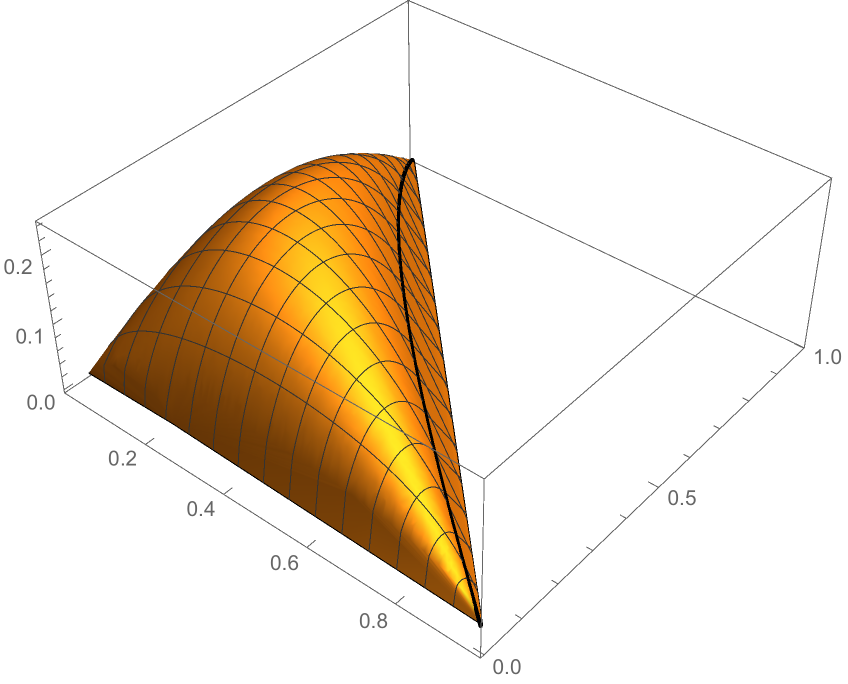}}
\caption{\label{surftension}Minus surface tension as a function of $(s,t)\in \N$ with $r=0.8$. The black line is graph of $-\sigma$ on the hyperbola bounding $\N^*$; $\sigma$ is linear
in $\N\setminus \N^*$.}
\end{figure}
A plot of the free energy $F(X,Y)$ is shown in Figure \ref{freeenergy}.
\begin{figure}[htbp]
\center{\includegraphics[width=4in]{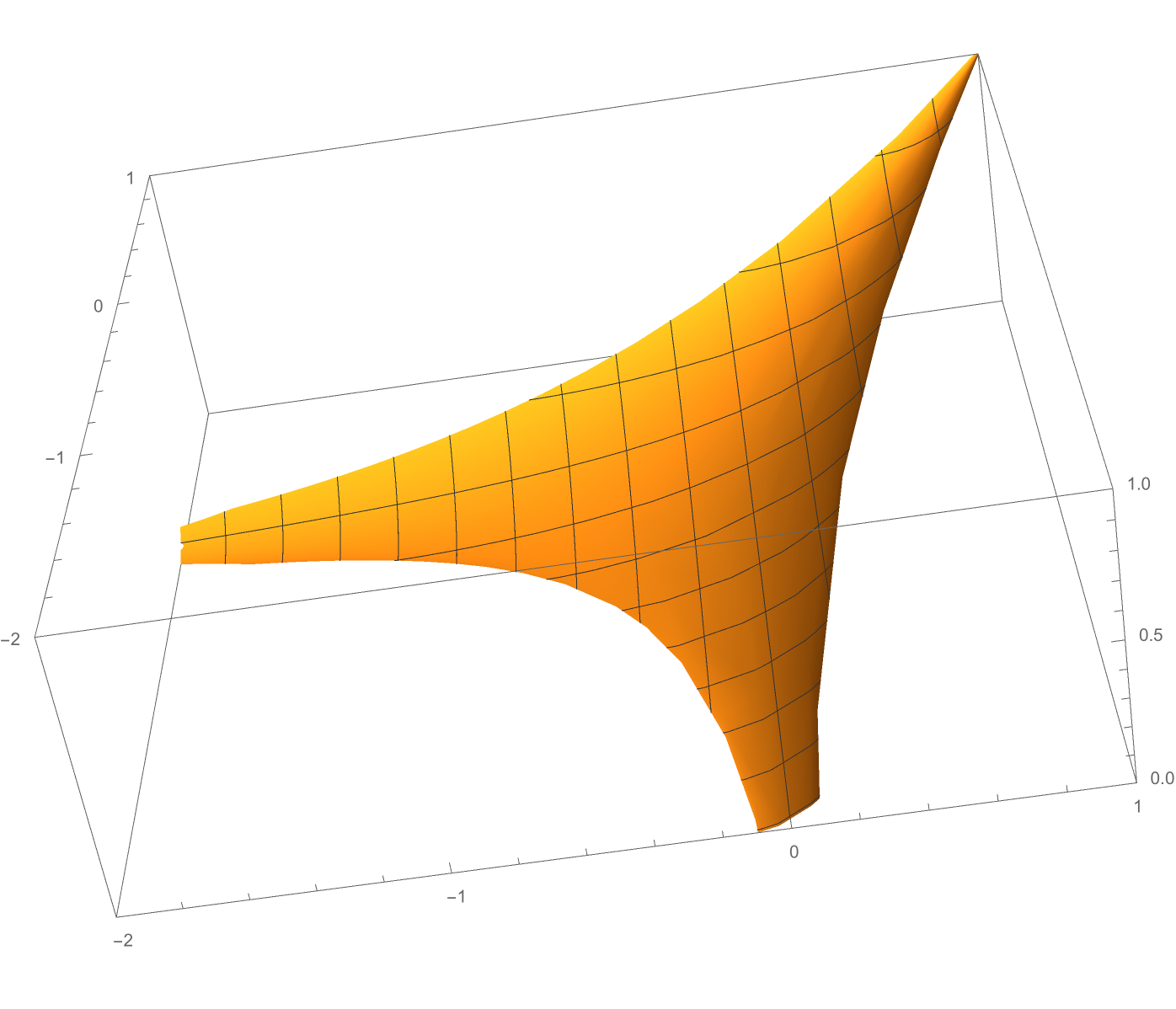}}
\caption{\label{freeenergy}The free energy $F(X,Y)$ for $r=0.8$. 
It is linear in each complementary component of the curved region
shown (except for a slope change along $y=x$).}
\end{figure}

Since $\sigma$ is linear on $\N\setminus\N$, there is not a unique
surface-tension-minimizing Gibbs measure of slope $(s,t)$ in this region:
any convex combination of Gibbs measures for $(s,t)$ on the bounding hyperbola, and of the appropriate slope, 
will be a minimizing Gibbs measure. In the analogous setting in the six-vertex model,
Aggarwal \cite{Aggarwal} showed that there is no ergodic Gibbs measure of such a slope.

\section{Euler-Lagrange equation, $r<1$ case}\label{sec:EL}

In this section we define $w=w_0, z=z_0$ and $u=u_0$, and recall the definition of $B(\cdot)$ in (\ref{Bdef}). 

Assuming the surface tension minimizer $h=h(x,y)$ is $C^2$, the Euler-Lagrange equation for $h$ 
is a PDE for $s=s(x,y), t=t(x,y)$: 
\be\label{ELeqn}\text{div}_{x,y}(\nabla_{s,t}\sigma(s,t))= 0.\ee 
Generally, however, 
 we cannot guarantee that $h$ is $C^2$: the ellipticity of the equation
(the ratio of eigenvalues of the Hessian of $\sigma$) degenerates at the vertices of  $\N$, and this causes the formation
of nonanalyticities in the limit shape called \emph{arctic curves}. These arise even in the case of the dimer model ($r=1$). However a theorem of Morrey \cite{Morrey} says that an analytic surface tension implies analytic solutions. 
Thus in the regions where $(s,t)$ lie in the interior of $\N^*$, the 
limit shape function $h$ is real analytic. One can work around the problem of nonanalyticity by first,
finding an analytic solution to (\ref{ELeqn}) and then extending it to its maximal domain of analyticity; in the cases
analyzed in \cite{kenyon2007limit} this maximal domain is bounded by curves on which $(s,t)$ limits to a corner of $\N$ (the arctic curves). One can then extend the solution linearly with that constant slope across these curves. An analogous
procedure applies in the present case when $r>1$, or when $r<1$ but one avoids $\N\setminus\N^*$. See 
however the discussion after Theorem \ref{ckpthm} and in Section \ref{bppsection} regarding Figures
\ref{bppr} and \ref{bppexact}. Making a more 
general statement requires further arguments which we will not attempt here. Beyond these examples
we will be satisfied in the present paper with
limit shapes having slope in the interior of $\N^*$, where we can use (\ref{ELeqn}).

Since $\sigma_s = X, \sigma_t=Y$, the Euler-Lagrange equation can be written 
\begin{equation}
\label{EL1}X_x+Y_y=0.\end{equation}
Note that we also have the equation of mixed partial derivatives of $h$:
\begin{equation}
\label{EL2}s_y=t_x.\end{equation}

We already have $\frac{\partial\sigma}{\partial t}=Y=-\log(1-r^2)-B(w)$. 
By symmetry $\sigma(s,t)=\sigma(t,s)$ which gives
$$\frac{\partial\sigma}{\partial s}=X = -\log(1-r^2)-B(\bar z)=-\log(1-r^2)+B(z).$$ 

\begin{prop}[Euler-Lagrange equation]
 \label{prop:EL}
For $r<1$ the EL equation (\ref{EL1}), combined with the mixed partials equation (\ref{EL2}), reduces to 
\begin{equation}
\label{EL3}B(z)_zz_x-B(w)_ww_y=0.\end{equation}
\end{prop}

Note that since $B$ is not complex analytic this equation is NOT equivalent to (\ref{EL1});  (\ref{EL1}) alone implies only that
$$B(z)_zz_x + B(z)_{\bar z}\bar z_x - B(w)_ww_y - B(w)_{\bar w}\bar w_y=0.$$

To prove the proposition we start with a lemma.
\begin{lemma}\label{spcurvesmall}
The quantities $w,z$ satisfy the relation
$p(w,z):=1-w-z+(1-r^2)wz=0.$
\end{lemma}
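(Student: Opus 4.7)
The plan is to exploit the factorization
\[
1 - w - z + (1-r^2)wz = (1-w)(1-z) - r^2 wz,
\]
reducing the identity to $(1-w)(1-z) = r^2 wz$. I introduce the auxiliary point $u^* := 1 - (1-r^2)z$, which plays the role of $u_0$ for the swapped problem: since $z = \overline{w^*}$ with $w^*$ the $w$-variable under $s\leftrightarrow t$, and that variable satisfies $\overline{u_0^*} = 1-(1-r^2)w^*$, we have $u^* = u_0^* = 1-(1-r^2)z$. Substituting $w = (1-\bar u_0)/(1-r^2)$, $1-w = (\bar u_0 - r^2)/(1-r^2)$, and similarly for $z$, a short expansion gives
\[
1 - w - z + (1-r^2)wz \;=\; \frac{\bar u_0\, u^* - r^2}{1-r^2},
\]
so the lemma is equivalent to the single identity $\bar u_0\, u^* = r^2$.

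To prove this I appeal to the angular characterization of $u_0$ in Figure~\ref{trianglefig}: $u_0$ is the unique point in the upper half plane such that, in the triangle $0,1,u_0$ with $r^2$ marked on the base, the angles at $0$, at $1$, and the subangle at $u_0$ subtended by $[0,r^2]$ are respectively $(1-s-t)\theta$, $s\theta$, $t\theta$, for a common positive scale $\theta$. By the same construction applied to $(t,s)$, the swapped point $u^*$ is characterized by the corresponding angles $(1-s-t)\theta^*$, $t\theta^*$, $s\theta^*$ for its own scale $\theta^*$. It therefore suffices to exhibit a point realizing the latter three angles with the same $\theta$.

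The candidate is $\tilde u := r^2/\bar u_0$. Using the two immediate consequences of $\bar u_0 = 1 - (1-r^2)w$, namely $1-\bar u_0 = (1-r^2)w$ and $\bar u_0 - r^2 = -(1-r^2)(1-w)$, together with $\arg w = s\theta$ and $\arg(1-w) = -(1-s)\theta$, a direct computation yields
\[
\arg\tilde u = (1-s-t)\theta, \qquad \arg(\tilde u - 1) = \pi - t\theta, \qquad \arg(\tilde u - r^2) = (1-t)\theta.
\]
A short triangle-sum calculation in the figures $0,1,\tilde u$ and $0,r^2,\tilde u$ then gives the three prescribed angles of the construction as $(1-s-t)\theta$, $t\theta$, and $s\theta$, all with the same $\theta$. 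By uniqueness of the triangle construction in Figure~\ref{trianglefig}, $\tilde u = u^*$, whence $\bar u_0\, u^* = r^2$ and the lemma follows.

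The main obstacle is the argument bookkeeping in the third step: $\bar u_0$ sits in the lower half plane and $r^2 - \bar u_0$ differs in sign from $1 - \bar u_0$, so one must track principal branches of $\arg$ consistently. Once this is done the verification is routine algebra combined with the uniqueness already built into the setup.
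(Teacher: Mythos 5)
Your proof is correct and takes essentially the same route as the paper. Both arguments reduce the identity $1-w-z+(1-r^2)wz=0$ to $\overline{u_0}\,u^*=r^2$ (i.e.\ $u^*$ is the inversion $r^2/\overline{u_0}$) and then invoke the angular uniqueness characterization of $u^*$ from Figure~\ref{trianglefig}; the paper reads off $\arg u=\arg u^*$ and $|u||u^*|=r^2$ directly from the concyclicity of $1,r^2,u,u^*$ in Figure~\ref{similartris}, while you replace the appeal to the picture with explicit bookkeeping of the three arguments $\arg\tilde u$, $\arg(\tilde u-1)$, $\arg(\tilde u-r^2)$ and verify they give the swapped ratios $[t:s:1-s-t]$ with the same $\theta$.
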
 

\begin{proof}Recall that $u=1-(1-r^2)\overline w$; let $u^*:=1-(1-r^2)\overline w^*=1-(1-r^2)z$.  Referring to Figure \ref{similartris}, we see that $\arg u=\arg u^*$ and $|u|\cdot |u^*|=r^2$. Thus $u^*\overline u=r^2$, or $(1-(1-r^2)z)(1-(1-r^2)w)=r^2,$ which is the desired result.
\end{proof}

\begin{figure}[htbp]
\center{\includegraphics[width=4in]{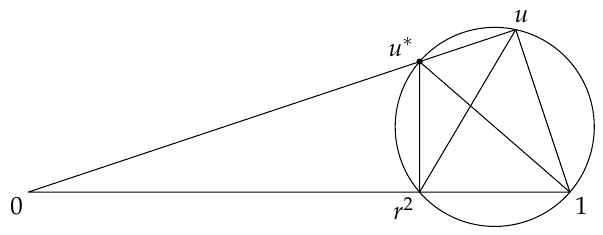}}
\caption{\label{similartris}Exchanging the roles of $s$ and $t$ in Figure \protect{\ref{trianglefig}}, 
the quantity $u^*$ can be obtained from $u$
by inversion in the ball of radius $r$ around the origin. Equivalently, $u,u^*$ have the same argument and the angle at $u$ of the triangle $\angle{1,u,r^2}$ is the same as for that of $u^*$, that is, the points $1,r^2,u,u^*$ are concentric.}
\end{figure}

\begin{proof}[Proof of Proposition~\ref{prop:EL}]
We compute with $w=w_1+iw_2$ (and using (\ref{By}))
\begin{align*}Y_y &= -\frac{dB(w)}{dw_1} \frac{d w_1}{dy} - \frac{dB(w)}{dw_2} \frac{dw_2}{dy} \\
&=\left(\arg w\Re\frac1{1-w}+\arg(1-w)\Re\frac1w\right)\frac{d w_1}{dy} + \left(-\arg w\Im\frac1{1-w}-\arg(1-w)\Im\frac1w\right)\frac{dw_2}{dy}\\
&=\arg w\Re\left(\frac{w_y}{1-w}\right)+\arg(1-w)\Re\left(\frac{w_y}{w}\right).
\end{align*}
where $w_y=\frac{\partial}{\partial y}(w_1+iw_2)$, and similarly
$$X_x = -\arg z\Re\left(\frac{z_x}{1-z}\right)-\arg(1-z)\Re\left(\frac{z_x}{z}\right).$$
Thus $X_x+Y_y=0$ becomes
\begin{equation}\label{XYexpand}-\arg z(\log|1-z|)_x+\arg(1-z)(\log|z|)_x+\arg w(\log|1-w|)_y-\arg(1-w)(\log|w|)_y=0.\end{equation}

We can write (\ref{EL2}) as
\begin{equation}\label{sytx}\frac{\partial}{\partial x}\left(\frac{\arg z}{\arg(\frac{z}{1-z})}\right) =\frac{\partial}{\partial y}\left(\frac{\arg w}{\arg(\frac{w}{1-w})}\right).\end{equation}
Let 
$$d=\arg\frac{z}{1-z}=-\arg\frac{w}{1-w}.$$ 
Multiplying (\ref{sytx}) by $d^2$ gives
$$\arg(z)_x d - \arg(z)d_x =-\arg(w)_y d + \arg(w)d_y$$
or
\begin{equation}\label{stexpand}-\arg(z)_x \arg(1-z) +\arg(1-z)_x \arg(z) +\arg(w)_y \arg(1-w) - \arg(1-w)_y\arg w=0.\end{equation}

Adding $-i$ times (\ref{stexpand}) to (\ref{XYexpand}) gives
$$\arg w(\log(1-w))_y-\arg(1-w)(\log w)_y-\arg z(\log(1-z))_x+\arg(1-z)(\log z)_x=0$$
or, using (\ref{Bz}),
\begin{equation}
\label{Burg}\frac{\partial B(w)}{\partial w} w_y - \frac{\partial B(z)}{\partial z} z_x = 0,\end{equation}
the desired result.
\end{proof}

It seems fortuitous that we can reduce the two real equations (\ref{EL1}),(\ref{EL2}) 
to a single complex equation; however
this is a general phenomenon for variational problems of this type, due to Amp\`ere, 
see \cite{Kenyonanalytic}.

We can simplify equation (\ref{Burg}) as follows.
Using the fact that $\frac{dw}{w(1-w)}=-\frac{dz}{z(1-z)}$ (which follows from Lemma \ref{spcurvesmall}), it becomes
$$\frac{\partial B(w)}{\partial w} w(1-w)w_y +\frac{\partial B(z)}{\partial z} z(1-z)w_x =0$$
and with (\ref{Bz})
\begin{equation}
\label{BB0}(w\arg w+(1-w)\arg(1-w))w_y + (z \arg z+(1-z)\arg(1-z))w_x=0.\end{equation}

\begin{lemma}
Let $r<1$. Let $\A$ be the function 
$$\A(z) = \frac1{\pi}(-z\arg z-(1-z)\arg(1-z)) = 2\frac{\partial B(z)}{\partial z} z(1-z).$$ 
The $(x,y)$ coordinates of the limit shape satisfy the linear first-order PDE
\begin{equation}\label{argent}\A(w)x_{\overline w} - \A(z)y_{\overline w}=0.\end{equation}
\end{lemma}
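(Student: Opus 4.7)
The plan is to perform a hodograph-style change of variables. Equation (\ref{BB0}) is a nonlinear PDE for the unknown $w=w(x,y)$, but by Lemma \ref{spcurvesmall} we may solve for $z$ as an explicit analytic function of $w$, namely $z=(1-w)/(1-(1-r^2)w)$. Thus all coefficients in (\ref{BB0}) depend on $w$ alone, and if we swap the roles of dependent and independent variables—treating $x$ and $y$ as functions of $w, \overline w$—the equation should become linear in $x,y$, which is exactly the content of (\ref{argent}).

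First I would rewrite (\ref{BB0}) using the abbreviation $\A$. By definition $w\arg w + (1-w)\arg(1-w) = -\A(w)$ and similarly for $z$, so (\ref{BB0}) reads
$$\A(w)w_y + \A(z)w_x = 0.$$
Next I would invert the Jacobian. Wherever the map $(x,y)\mapsto(\Re w, \Im w)$ is a local diffeomorphism, the Jacobian matrices of the two maps are mutually inverse:
$$\begin{pmatrix} w_x & w_y \\ \overline w_x & \overline w_y \end{pmatrix}\begin{pmatrix} x_w & x_{\overline w} \\ y_w & y_{\overline w} \end{pmatrix} = I.$$
Reading off the $(1,2)$-entry of this identity gives the relation $w_x\, x_{\overline w} + w_y\, y_{\overline w} = 0$, equivalently $x_{\overline w}/y_{\overline w} = -w_y/w_x$. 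Combining with $w_y/w_x = -\A(z)/\A(w)$ from the displayed form of (\ref{BB0}) above, I conclude $x_{\overline w}/y_{\overline w} = \A(z)/\A(w)$, i.e.,
$$\A(w)\, x_{\overline w} - \A(z)\, y_{\overline w} = 0,$$
which is (\ref{argent}).

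The only real obstacle is to justify the hodograph change of variables, i.e.\ that the Jacobian $w_x \overline w_y - w_y \overline w_x$ is nonvanishing on the open repulsive region so that $(\Re w, \Im w)$ do serve as local coordinates. This follows from the parametrization of Figure \ref{trianglefig}, which shows that $w$ is a genuinely non-holomorphic function of $(s,t)$ (equivalently of $(x,y)$) throughout $\N^*$, its real and imaginary parts depending nondegenerately on both real coordinates; at points where the Jacobian degenerates one invokes a limiting argument. The payoff of the transformation is structural: because $\A(w)$ and $\A(z)=\A(z(w))$ are functions of $w$ alone, (\ref{argent}) is a \emph{linear} first-order PDE in $x,y$, which sets up the explicit integration carried out in Section \ref{sec:limitshapes}.
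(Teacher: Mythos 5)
Your proof is correct and is essentially the same as the paper's: both rewrite (\ref{BB0}) as $\A(w)w_y+\A(z)w_x=0$ and combine it with the inverse-Jacobian identity $w_x x_{\overline w}+w_y y_{\overline w}=0$ to obtain (\ref{argent}). Your version merely spells out the chain-rule computation and the nondegeneracy of the hodograph map, which the paper leaves implicit.
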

\begin{proof}
This follows immediately from (\ref{BB0}), the (locally defined) inverse mapping $x=x(w),y=y(w)$, and the identity $w_xx_{\overline w}+w_yy_{\overline w}=0$. (While the map from $(x,y)$ to $w$ may not be globally invertible,
it is an open map hence invertible locally except at isolated branch points.)
\end{proof}

\section{Limit shapes}\label{sec:limitshapes}
In this section we derive a parametrisation for $(x,y,H)$, the coordinates and limiting shape of the height function, in terms of an analytic function describing the boundary domain. The main results are Theorem~\ref{xyH} in Section~\ref{se:general} and its corollary in Section~\ref{se:arctic}.

\subsection{Integration}

The equation (\ref{argent}) can be solved analytically as follows.
Divide (\ref{argent}) by $-\theta= \arg(z/(1-z))=-\arg (w/(1-w))$ to get  
$$\frac{w\arg w+(1-w)\arg(1-w)}{\arg w-\arg(1-w)}x_{\overline w} + \frac{z \arg z+(1-z)\arg(1-z)}{\arg z-\arg (1-z)}y_{\overline w}=0,$$
or (recalling $s=\frac{\arg w}{\arg(w/(1-w))}$ and $t=\frac{\arg z}{\arg(z/(1-z))}$)
\begin{equation}
\label{deriv}(w-1)x_{\overline w}+ sx_{\overline w} +(z-1)y_{\overline w} + ty_{\overline w}=0.\end{equation}

Since $s=H_x,t=H_y$ for the (real valued) height function $H$ we have
$sx_{\overline w} +ty_{\overline w}=H_{\overline w}.$

The equation (\ref{deriv}) is then
$$(w-1)x_{\overline w}+ (z-1)y_{\overline w} + H_{\overline w}=0,$$ which we can integrate to get
\begin{theorem} The following equation holds:
\begin{equation}\label{Heq} (w-1)x+(z-1)y+H(x,y) + f(w) = 0\end{equation}
where $f$ is an analytic function depending only on boundary conditions.
\end{theorem}

Remarkably, the same result holds in the $r>1$ case as well, see Theorem \ref{univlarger}.

Taking the imaginary part of (\ref{Heq}) gives
\begin{equation}\label{wzf}\Im[(w-1)x+(z-1)y] =- \Im f(w)\end{equation} 
where the right-hand side is an arbitrary harmonic function.

Write $w=\frac{1-\overline u}{1-r^2}$ and $z=\frac{1-u^*}{1-r^2}$ with $u^*=r^2/\overline{u}$. 
Then with $u=\RR \e^{i\phi}$ equation (\ref{wzf}) gives (absorbing the $(1-r^2)$ factor into $f$)
\begin{equation}\label{yfromx}x-\frac{r^2}{\RR^2}y = \frac{g(u)-\overline{g(u)}}{u-\overline u}\end{equation}
where $g(u)=-(1-r^2)f(w)$. For any given $g$ this can be solved for $y$, and, plugging back into (\ref{argent}),
gives an equation of the form $$A_1x_u+ A_2x =A_3$$ where
$A_1,A_2,A_3$ are functions of $u$. This can then be integrated by standard techniques.

\subsection{Example zero}

Here is an example. Suppose $g\equiv 0$. 
Starting from (\ref{argent}), using $x_{\overline w} = -(1-r^2)x_u$ and $y_{\overline w} = -(1-r^2)y_u$
gives (using (\ref{yfromx}))
$$\A(w)x_u - \A(z)\left(\frac{\RR^2}{r^2}x\right)_u = 0,$$
or, multiplying by $r^2$, using $\RR^2=u\overline u$ and rearranging gives
\begin{equation}
\label{xuexzero}
\frac{x_u}{x} = \frac{\A(z)\overline u}{r^2\A(w)-u\overline u\A(z)}. 
\end{equation}

\begin{lemma}
 \label{Ader}
The numerator of (\ref{xuexzero}) is minus the $u$ derivative of its denominator:
$$ (r^2\A(w)-u\overline u\A(z))_u = -\A(z)\overline u.$$
\end{lemma}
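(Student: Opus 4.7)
The plan is to expand $\partial_u$ via the product rule and reduce the claim to a single algebraic identity relating $\A_{\overline w}(w)$ and $\A_{\overline z}(z)$.

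First I would observe that both $w = (1-\overline u)/(1-r^2)$ and $z = (\overline u - r^2)/((1-r^2)\overline u)$ are antiholomorphic in $u$, so $\partial_u w = \partial_u z = 0$. For any smooth (not necessarily holomorphic) function $F$, this gives the Wirtinger chain-rule identities $\partial_u F(w) = F_{\overline w}(w)\, \partial_u \overline w$ and $\partial_u F(z) = F_{\overline z}(z)\, \partial_u \overline z$, where direct differentiation gives $\partial_u \overline w = -1/(1-r^2)$ and $\partial_u \overline z = r^2/((1-r^2) u^2)$.

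Applying the product rule to the left-hand side of the lemma and using $\partial_u(u\overline u) = \overline u$, the term $-\overline u\,\A(z)$ is exactly the desired right-hand side. So the lemma reduces to
\[ r^2 \partial_u \A(w) = u \overline u \, \partial_u \A(z), \]
and substituting the chain rule together with the values of $\partial_u \overline w,\partial_u \overline z$ above reduces this further to the symmetric identity
\[ u\,\A_{\overline w}(w) + \overline u \, \A_{\overline z}(z) = 0. \]

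For the last step I would compute $\A_{\overline \zeta}$ directly from $\A(\zeta) = -\zeta\arg\zeta - (1-\zeta)\arg(1-\zeta)$. Writing $\arg\zeta = (2i)^{-1}\log(\zeta/\overline\zeta)$, a short computation yields the clean formula
\[ \A_{\overline\zeta}(\zeta) = \frac{\Im \zeta}{\overline\zeta(1-\overline\zeta)}. \]
Plugging in the explicit expressions for $\overline w$, $1-\overline w$, $\overline z$, $1-\overline z$ in terms of $u,\overline u$ (the same ones used in the proof of Lemma~\ref{spcurvesmall}), both $\A_{\overline w}(w)$ and $\A_{\overline z}(z)$ become simple rational multiples of $\Im u$ with $(u-r^2)(1-u)$ in the denominator, and the claimed cancellation becomes an algebraic tautology after clearing the factors of $\overline u$ in the $z$-expression against the leading $\overline u$.

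The main obstacle is bookkeeping rather than mathematics: one has to be careful to distinguish $\partial_\zeta$ from $\partial_{\overline\zeta}$ since $\A$ is not holomorphic, and to track the factors of $(1-r^2)$ and $u$ correctly. An alternative route would be to try to exploit the algebraic constraint $1-w-z+(1-r^2)wz = 0$ of Lemma~\ref{spcurvesmall} to obtain the identity symbolically, but since $\A_{\overline\zeta}$ involves the antiholomorphic variables and the constraint is holomorphic in $(w,z)$, it does not obviously simplify matters, so direct computation appears to be the shortest path.
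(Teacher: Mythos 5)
Your proposal is correct, and it is essentially the same computation as the paper's, just organized slightly differently. The paper differentiates in $\overline w$ (then converts to $\partial_u$ at the end) and works through $B$, invoking $B_{z\overline z}=\Im z/(\pi|z|^2|1-z|^2)$ and the chain rule $\partial_{\overline w}\overline z=-\overline z(1-\overline z)/(\overline w(1-\overline w))$; you differentiate directly in $u$ and use the clean form $\A_{\overline\zeta}=\Im\zeta/(\overline\zeta(1-\overline\zeta))$ together with the explicit $\partial_u\overline z=r^2/((1-r^2)u^2)$. Both routes hinge on the same two facts — the formula for the mixed Wirtinger derivative of $\A$ (equivalently the Laplacian of $B$) and the cancellation $r^2\Im w+u\overline u\,\Im z=0$ — and your reduction to $u\,\A_{\overline w}(w)+\overline u\,\A_{\overline z}(z)=0$ is exactly that cancellation written after dividing through by $\overline\zeta(1-\overline\zeta)$. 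One small bonus of your version: working with $\A_{\overline\zeta}$ directly sidesteps the harmless factor-of-$\pi$ mismatch in the paper's claimed identity $\A(z)=B_z\,z(1-z)$ (which should read $\A(z)=\pi B_z\,z(1-z)$ given the $1/\pi$ in the definition of $B$; it drops out of the lemma by homogeneity).
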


\begin{proof}
Using equation (\ref{LapB}), we have 
\begin{align*}
&\frac{d}{d\overline w}[r^2B_ww(1-w)-u\overline uz(1-z)B_z] = 
r^2w(1-w)B_{w\overline w}-u\overline uz(1-z)B_{z\overline z}\frac{d\overline z}{d\overline w}+(1-r^2)\overline uz(1-z)B_z\\
&=r^2w(1-w)\frac{\Im(w)}{2\pi|w|^2|1-w|^2}-u\overline uz(1-z)\frac{\Im(z)}{2\pi|z|^2|1-z|^2}(-\frac{\overline z(1-\overline z)}{\overline w(1-\overline w)})
+(1-r^2)\overline uz(1-z)B_z\\
& =(1-r^2)\overline uz(1-z)B_z
\end{align*}
since 
$r^2\Im(w)+u\overline u\Im(z)=0.$ Since $\frac{d}{du} = \frac{-1}{1-r^2}\frac{d}{d\overline w}$, this proves the claim.
\end{proof}

Thus
$$x = \frac{C(\bar u)}{r^2\A(w)-u\overline u\A(z)}$$ 
where $C$ is analytic.
Here $C$ is determined by the fact that $x$ must be real; however the denominator is already real:

\begin{lemma}
\label{rARAreal}
The expression $r^2\A(w)-u\overline u\A(z)$ is real.
\end{lemma}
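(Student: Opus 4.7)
The plan is to compute the imaginary part of $r^2\A(w) - u\overline{u}\A(z)$ directly and show it vanishes using the geometric relation from Lemma~\ref{spcurvesmall}. Since $u\overline{u} = |u|^2$ is real, it suffices to show
$$r^2\,\Im\A(w) = |u|^2\,\Im\A(z).$$
The first step is the easy expansion: since $\arg w$ and $\arg(1-w)$ are real,
$$\Im\A(w) = -\Im(w)\arg w - \Im(1-w)\arg(1-w) = -\Im(w)\bigl[\arg w - \arg(1-w)\bigr] = -\Im(w)\arg\!\tfrac{w}{1-w},$$
and likewise $\Im\A(z) = -\Im(z)\arg\!\tfrac{z}{1-z}$.

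Next I would use the identity $\arg\!\tfrac{z}{1-z} = -\arg\!\tfrac{w}{1-w}$ (which is how the quantity $d$ was set up just before equation~(\ref{sytx})) to replace one factor. After this substitution the target identity becomes
$$r^2\,\Im(w) = |u|^2\,\Im(z),$$
but with opposite sign from what one sees naively, so the real goal is to verify $|u|^2\Im(z) = -r^2\Im(w)$. This is where Lemma~\ref{spcurvesmall} enters: the relation $1-w-z+(1-r^2)wz = 0$ together with $\overline{u} = 1-(1-r^2)w$ gives $\overline{u}\,z = 1-w$, hence
$$z = \frac{(1-w)u}{|u|^2}, \qquad |u|^2\Im(z) = \Im\bigl((1-w)u\bigr).$$
Expanding $(1-w)u = (1-w)\bigl(1-(1-r^2)\overline{w}\bigr)$ and taking imaginary parts collapses everything to $-r^2\Im(w)$, as required.

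Putting the pieces together:
$$\Im\bigl(r^2\A(w) - u\overline{u}\A(z)\bigr) = -r^2\Im(w)\arg\!\tfrac{w}{1-w} + |u|^2\Im(z)\arg\!\tfrac{z}{1-z} = -r^2\Im(w)\arg\!\tfrac{w}{1-w} + \bigl(-r^2\Im(w)\bigr)\bigl(-\arg\!\tfrac{w}{1-w}\bigr) = 0.$$

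There is essentially no obstacle here; the proof is a short algebraic verification. The only step that requires care is the bookkeeping for $\Im\A$: one must remember that $\A$ is complex because $z$ and $1-z$ are complex, while $\arg z$ and $\arg(1-z)$ are the (real) arguments, so the imaginary part comes entirely from $\Im(z)$ and $\Im(1-w) = -\Im(w)$. Once that is in hand, the conclusion follows from Lemma~\ref{spcurvesmall} and the already-established antisymmetry of $\arg(w/(1-w))$ under the $s\leftrightarrow t$ swap.
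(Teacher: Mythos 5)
Your proof is correct and follows essentially the same route as the paper: both reduce $\Im\bigl(r^2\A(w)-u\overline u\A(z)\bigr)$ to $-\theta\bigl(r^2\Im w + |u|^2\Im z\bigr)$ via $\Im(1-w)=-\Im w$ and the antisymmetry $\arg\tfrac{w}{1-w} = -\arg\tfrac{z}{1-z}$, and then invoke the vanishing of $r^2\Im w + |u|^2\Im z$. The one place you add value is that the paper simply asserts this last identity (it appears without derivation in the proof of Lemma~\ref{Ader} as ``since $r^2\Im(w)+u\overline u\Im(z)=0$''), whereas you derive it cleanly from Lemma~\ref{spcurvesmall} via $\overline u\,z = 1-w$, which is a nice, self-contained verification.
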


\begin{proof} We have 
\begin{align*}\pi\Im(r^2\A(w)-u\overline u\A(z))&=\Im[r^2(-w\arg w-(1-w)\arg(1-w))-\RR^2(-z\arg z-(1-z)\arg(1-z))]\\
&=-r^2(\Im(w)\arg(w)+\Im(1-w)\arg(1-w))+\RR^2(\Im(z)\arg(z)+\Im(1-z)\arg(1-z))\\
&=-r^2\Im(w)(\arg w-\arg(1-w)) +\RR^2\Im(z)(\arg z-\arg(1-z))\\
&=-\theta(r^2\Im w+\RR^2\Im(z))\\
&=0. \qedhere
\end{align*}
\end{proof}

So in this case $C$ is a real constant. Setting $y=\frac{\RR^2}{r^2}x$, we arrive at:
$$(x,y) = \left(\frac{C}{r^2\A(w)-u\overline u\A(z)},\frac{Cu\bar u}{r^2(r^2\A(w)-u\bar u\A(z))}\right).
$$

\subsection{General solution}
\label{se:general} 

For general $g$, using again (\ref{yfromx}), (\ref{argent}) is
$$\A(w)x_u - \A(z)\left(\frac{\RR^2}{r^2}(x-k)\right)_u = 0,$$
where $k(u)= \frac{g(u)-\overline{g(u)}}{u-\overline u}$.
Multiplying by $r^2$ and expanding,
$$(r^2\A(w)-\RR^2\A(z))x_u - \overline u\A(z)x = -\A(z)(\RR^2k_u+k(\RR^2)_u),$$
or, by Lemma~\ref{Ader} and using $\RR^2=u\overline u$,
$$[(r^2\A(w)-\RR^2\A(z))x]_u = -\A(z)(\RR^2k_u+k\overline u)=-\A(z)(u\overline u k)_u.$$
This can be integrated to yield
\begin{align*}x&=\frac{-1}{(r^2\A(w)-\RR^2\A(z))}\left[\int \A(z)(u\overline u k)_udu + C(\overline u)\right].
\end{align*}
Now upon integration by parts,
$$x = \frac{-1}{(r^2\A(w)-\RR^2\A(z))}\left[\A(z)u\overline u k - \overline u\int uk\frac{d\A(z)}{du}du + C(\overline u)\right].$$
Since
$$\frac{d\A(z)}{du} = \frac{-1}{1-r^2}\frac{d\A(z)}{d\overline w} = \frac{\Im(z)}{\pi(1-r^2)\overline w(1-\overline w)} = -\frac{r^2\Im(u)}{\pi u\overline u(1-u)(u-r^2)},$$
(the second equality here follows from a short calculation)
we have
\begin{align}x &= \frac{-1}{(r^2\A(w)-\RR^2\A(z))}\left[\A(z)\RR^2 k + \frac1{\pi}\int \frac{r^2\Im g(u)}{(1-u)(u-r^2)}du + C(\overline u)\right]\nonumber\\
\label{xgendef}&= \frac{-1}{(r^2\A(w)-\RR^2\A(z))}\left[\A(z)\RR^2 k + \frac1{2\pi i}\int \frac{r^2g(u)}{(1-u)(u-r^2)}du-\frac{\overline{g(u)}}{2\pi i}\int \frac{r^2}{(1-u)(u-r^2)}du + C(\overline u)\right].\end{align}

Here $C(\overline u)$ is determined by the property that $x$ is real:

\begin{lemma}
Up to an additive constant, the integration constant $C(\overline u)$ is given by the following analytic function of $\overline u$:

$$ C(\overline u) = -\frac1{2\pi i}\int\frac{r^2\overline{g(u)}}{(1-\overline u)(\overline u-r^2)}d\overline u+\frac{\overline{g(u)}}{2\pi i}\int\frac{r^2}{(1-\overline u)(\overline u-r^2)}d\overline u
-\frac{r^2 \overline{g(u)}}{1-r^2}.
$$
\end{lemma}

\begin{proof}
To see this, take the imaginary part of the quantity in square brackets in (\ref{xgendef}); the term outside the brackets is real by Lemma \ref{rARAreal}.
The first integral in the second line of (\ref{xgendef}) and the first integral in $C(\overline u)$ add together to give a real quantity.
The second integrals add to give 
$$-\frac{r^2\overline{g(u)}}{\pi(1-r^2)}\arg\left(\frac{u-r^2}{u-1}\right) = -\frac{r^2\overline{g(u)}}{1-r^2}\frac{(-\pi+\theta)}{\pi}.$$
Finally note that 
$$\Im \A(z) = -\tfrac1{\pi}\Im z(\arg z-\arg(1-z))=\tfrac{\theta}{\pi}\Im z = -\frac{r^2\theta\Im(u)}{\pi(1-r^2)\RR^2}$$
so that 
$$\Im(\A(z)\RR^2 k(u)) = -\frac{r^2\theta\Im g(u)}{\pi(1-r^2)}.$$
Using the fact that $\Im g(u) = -\Im \overline{g(u)}$, this completes the proof of the claim.
\end{proof}

Plugging this value of $C(\overline u)$ into (\ref{xgendef}), we can then write 
\begin{equation}\label{x}x=\frac{-1}{(r^2\A(w)-\RR^2\A(z))}\left[\Im\left(\frac1{\pi}\int\frac{r^2 g(u)}{(1-u)(u-r^2)}du\right)+\Re\left(\A(z)\RR^2k(u)-\frac{g(u)r^2\theta}{\pi(1-r^2)}\right)\right].\end{equation}

We collect the parametrisation of the coordinates and limit shape of the height function, $(x,y,H)$, in terms of an analytic function $g(u)$, in a final theorem. For any analytic function $g(u)$, define $\F(u)$ to be the analytic function \begin{equation}
\label{Fdef} \F(u) = \int\frac{r^2 g(u)}{(1-u)(u-r^2)}du. \end{equation} Then $k(u)=\Im g(u)/\Im u$ is written in terms of $\F(u)$ as $$k(u) = \frac{\Im(\F'(u)(1-u)(u-r^2))}{r^2\Im (u)}.$$
Let furthermore, as above, $u=\RR \e^{i\phi}$ and
$$
w=\frac{1-\overline u}{1-r^2},\quad z=\frac{1-r^2/\overline u}{1-r^2},\quad \theta = -\arg \left(\frac{z}{1-z}\right),\quad 
\A(z)=\tfrac1{\pi}(-z \arg(z) - (1-z) \arg(1-z)).
$$

\begin{theorem}\label{xyH}
With the definitions above we have
\begin{equation} \label{xF}
  x=\frac{-1}{(r^2\A(w)-\RR^2\A(z))}\left[\Im
    \tfrac1{\pi}\F(u)+\Re\left(-\frac{\F'(u)(1-u)(u-r^2)\theta}{\pi(1-r^2)}+\A(z)\RR^2k(u)\right)\right],
\end{equation}
and $y$ is determined from (\ref{yfromx}),
\begin{equation} \label{arcticy}
  y= \frac{u \overline u}{r^2} \left (x - k(u) \right).
\end{equation}
The (real) height function $H$ is determined from (\ref{Heq}),
\begin{equation}
  H=\frac{1}{1-r^2}\left( -\overline{g(u)} + (\overline u -r^2)x + r^2(\overline u^{-1}-1)y\right). 
\end{equation}
\end{theorem}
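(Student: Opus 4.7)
The plan is to verify that Theorem~\ref{xyH} is a direct repackaging of formulas already derived in this section, using the substitution $g(u) = \F'(u)(1-u)(u-r^2)/r^2$ to pass from (\ref{x}), (\ref{yfromx}) and (\ref{Heq}) to (\ref{xF}), (\ref{arcticy}), and the stated height formula.

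First I will identify the integral $\int\frac{r^2 g(u)}{(1-u)(u-r^2)}\,du$ appearing inside the brackets of (\ref{x}) with the function $\F(u)$ defined in (\ref{Fdef}). Differentiating gives $\F'(u) = r^2 g(u)/((1-u)(u-r^2))$, which inverts to $g(u) = \F'(u)(1-u)(u-r^2)/r^2$. Substituting this into $k(u) = (g(u)-\overline{g(u)})/(u-\overline u) = \Im g(u)/\Im u$ immediately yields the claimed expression $k(u) = \Im(\F'(u)(1-u)(u-r^2))/(r^2\Im u)$. For (\ref{xF}) itself, the first term in the bracket of (\ref{x}) becomes $\Im \F(u)$ by the definition of $\F$, while $g(u)r^2 = \F'(u)(1-u)(u-r^2)$ converts the second term into the form appearing in (\ref{xF}); the factor $\A(z)\rho^2 k(u)$ is already in the desired shape. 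Equation (\ref{arcticy}) is nothing other than (\ref{yfromx}) solved for $y$ using $\rho^2 = u\overline u$.

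For the height, I start from (\ref{Heq}) rewritten as $H = -(w-1)x-(z-1)y-f(w)$. Using $w = (1-\overline u)/(1-r^2)$ and $z = (1-r^2/\overline u)/(1-r^2)$, a short computation gives
\[
w-1 = -\frac{\overline u - r^2}{1-r^2}, \qquad z-1 = -\frac{r^2(\overline u^{-1}-1)}{1-r^2}.
\]
To re-express $f(w)$ in terms of $g$, note that the definition just after (\ref{yfromx}) reads $g(u) = (1-r^2)f(\overline w)$; swapping $u \leftrightarrow \overline u$ (which swaps $w \leftrightarrow \overline w$) gives $g(\overline u) = (1-r^2)f(w)$, so $f(w) = g(\overline u)/(1-r^2)$. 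Substituting into $H = -(w-1)x-(z-1)y-f(w)$ produces the advertised formula $H = \frac{1}{1-r^2}(-g(\overline u) + (\overline u - r^2)x + r^2(\overline u^{-1}-1)y)$.

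There is no substantive new analytic content beyond the derivations of (\ref{x}), (\ref{yfromx}) and (\ref{Heq}); the only point requiring care is the bookkeeping of which variables carry a conjugate when passing between $f$, a function of $w$, and $g$, a function of $u$, since a misplaced conjugate would turn an analytic function into an antianalytic one and destroy the parametrization by an arbitrary analytic $g$.
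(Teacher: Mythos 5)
Your proposal is correct and follows the same route the paper implicitly takes: Theorem~\ref{xyH} is a collection of (\ref{x}), (\ref{yfromx}), and (\ref{Heq}) rewritten through the substitution $\F'(u)=r^2 g(u)/((1-u)(u-r^2))$, together with the identities $w-1=-(\overline u-r^2)/(1-r^2)$, $z-1=-r^2(\overline u^{-1}-1)/(1-r^2)$, and $f(w)=g(\overline u)/(1-r^2)$, all of which you verify cleanly, including the conjugation bookkeeping that is the only delicate point.
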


\subsection{Arctic boundary}\label{se:arctic}

Note that in order for the solution to have an `arctic' boundary, equation (\ref{yfromx}) 
needs to have a limit when $u$ becomes real; this means $\Im(g)$ is zero for $u\in\R$, that is, $g$ is \emph{real} analytic. Let us consider the behavior of $(x,y)$ along the arctic boundary, when $u=p+q i$ for $p\in\R$ and $q=\eps$ is small. 

The function $\A(z)$ is piecewise analytic with four different pieces, giving rise to a piecewise analytic boundary. The behaviour along the boundary of the ingredients of (\ref{xF}) are given in Appendix~\ref{ap:arctic}. Let $g(u)$ be an analytic function parametrising a domain via (\ref{xF}) , and define $\F(u)$ as in (\ref{Fdef}). Let $$F=\Re \F,$$ then the analytic boundary pieces of the limit shape are determined, up to local constants $c_1,c_2,c_3$, by the following equations.

\begin{equation}\label{xinF}
x=\begin{cases}
\displaystyle F_p\frac{(r^2-2p)(1-p)^2}{r^2(p^2-2p+r^2)}+F_{pp}\frac{p(r^2-p)(1-p)^2}{r^2(p^2-2p+r^2)} 
- \frac{c_1(1-r^2)}{p^2-2p+r^2}&p<0\\[.15in]
\displaystyle F_p\frac{1-2p}{r^2}+F_{pp}\frac{p(1-p)}{r^2}- \frac{c_2r^2(1-r^2)}{(p-r^2)^2}&0<p<r^2\\[.15in]
\displaystyle F_p\frac{1+r^2-3p}{r^2}+F_{pp}\frac{-r^2+2p+2r^2p-3p^2}{r^2}+F_{ppp}\frac{(1-p)(p-r^2)p}{2r^2}&r^2<p<1\\[.15in]
\displaystyle F_p\frac{r^2-2p}{r^2}+F_{pp}\frac{p(r^2-p)}{r^2}- \frac{c_3(1-r^2)}{(p-1)^2}&1<p,\end{cases}
\end{equation}
where we have used the fact that $\F$ satisfies the Cauchy-Riemann equations. 
The corresponding paramerisation of the $y$ coordinate on the boundary is determined by
\begin{equation}\label{gprime}y=\frac{p^2}{r^2} \left( x - g'(p)\right).
\end{equation}
The constants 
$c_1,c_2,c_3$ come from the imaginary part $\Im\F$, and depend on the height function along the relevant part of the boundary:
when $p<0$, the height function $H=c_1$ is a constant;
when $0<p<r^2$, the height $H$ is of the form $H = y+c_2$ and when $1<p$, $H$ is of the form $H=x+c_3$.

All but the third of the four equations (\ref{xinF}) also follow directly from (\ref{Heq}) and (\ref{yfromx}), which we can 
consider to be two linear equations for $x,y$, once we plug in the appropriate linear function $H$ and take the limit
as $u$ tends to the boundary.

\subsection{Quadratic example}

Let us take $g(u)=-(1-u)(u-r^2)/r^2$ so that $\F(u)=-u$ by (\ref{Fdef}), and therefore on the boundary $F=-p$ 
with $u=p+iq$. The resulting limit shape and piecewise analytic boundary is depicted in Figure \ref{semibpp}.

\begin{figure}[htbp]
\center{\includegraphics[width=4in]{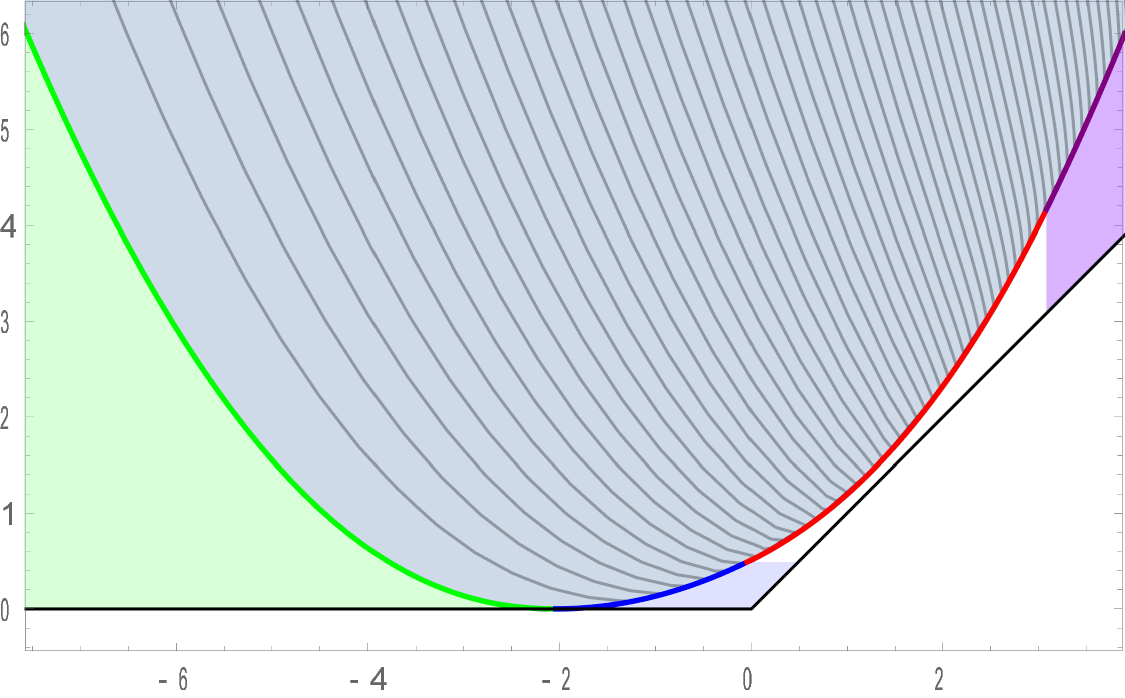}}
\caption{\label{semibpp}The limit shape with $g(u)=-(1-u)(u-r^2)/r^2$ inside the region bounded by the $x$ axis and the line $x=y$, 
shown with height contours (here $r=0.7$).
The four analytic pieces of the frozen boundary are shown in the four colors green, blue, red, purple. }
\end{figure}

The grey region in the figure is the repulsive region. One can extend the height function on the repulsive region to the green, blue, and purple regions by the linear functions $0, y$, and $x$ respectively, so that it 
has boundary values $0$ on the $x$-axis and $H(x,y)=y$ on the line $x=y$. Then we can conclude that
this height function is a limiting height function for the domain bounded by the $x$-axis and the line $x=y$. 
In this case the white region outside the red curve is a ``neutral'' region where there is (conjecturally)
no limit shape and the height is random.

\subsubsection{Boundary curves}
Since $F_p=-1$ and higher order derivatives of $F$ are zero, the parametrisation for the boundary curves can be readily read off from (\ref{xinF}):

\begin{itemize}[itemsep = 4pt]
 \item \textbf{Green boundary}: $p<0$
 $$(x,y) = \left( -\frac{(r^2-2p)(1-p)^2}{r^2(p^2-2p+r^2)}, \frac{p^2 \left(p-r^2\right)^2}{r^4 \left(p^2-2 p+r^2\right)} \right)
$$ 
Below and on this boundary the limit shape has a facet where the height is constant $H=0$.

\item \textbf{Blue boundary}: $0<p<r^2$
 $$(x,y) = \left( -\frac{1-2 p}{r^2} , \frac{p^2}{r^2} \right)
$$ 
The height function is $C^1$ but not analytic across this curve ($(s,t)$ tends to $(0,1)$ in the gray region as one approaches the blue/gray boundary). Along this curve the height varies linearly, $H(x,y)=y$, corresponding to the height on the black line. The blue curve is the boundary of the grey region until it intersects the red curve where it bends away from the grey region. We don't know the actual boundary between the blue facet and the white
neutral region, except that the blue facet necessarily contains the region directly to its right, as illustrated.

\item \textbf{Red boundary}: $r^2<p<1$
 $$(x,y) = \left( -\frac{r^2-3 p+1}{r^2} , \frac{p^3}{r^4} \right)
$$ 
This curve bounds the grey region and the height function is not analytic across this curve. To the right of the red curve the height is neither constant nor linear, but apparently random.

\item \textbf{Purple boundary}: $1<p$
 $$(x,y) = \left( -1+\frac{2 p}{r^2} , \frac{p^2}{r^4}  \right)
$$ 
The purple boundary is analogous to the blue boundary. The height function is $C^1$ but not analytic across this curve, and along the curve it varies linearly, $H(x,y)=x$. The purple facet contains the region directly below the purple curve,
and possibly some part to the left of this region as well.
\end{itemize}

\subsubsection{Boxed plane partition}\label{bppsection}

For the ``boxed plane partition'' region of Figure \ref{bppr}
with vertices $\{((-1,-1),(0,-1),(1,0),(1,1),(0,1),(-1,0)\}$
we take
\begin{equation}
  \label{gbpp}
g(u) = \left(1-\frac{r}{u}\right)\sqrt{u^2+\frac{-3 + 2 r - 3
    r^2}4u+r^2}.
\end{equation}
See Figure \ref{bppexact}. This formula is valid for $r\in(1/3,1)$. It has facets analogous to the case $F=u$ above (not drawn). 
The oval repulsive region degenerates, as $r$ decreases to $1/3$, to a segment. See Figure \ref{bppbdy}.
(For an asymmetric hexagon, with side lengths $n_1,n_2,n_3,n_1,n_2,n_3,$ 
this degeneration will take place at a different value of $r$ depending on the ratios $n_1/n_2, n_2/n_3$). 
\begin{figure}[htbp]
\center{\includegraphics[width=3in]{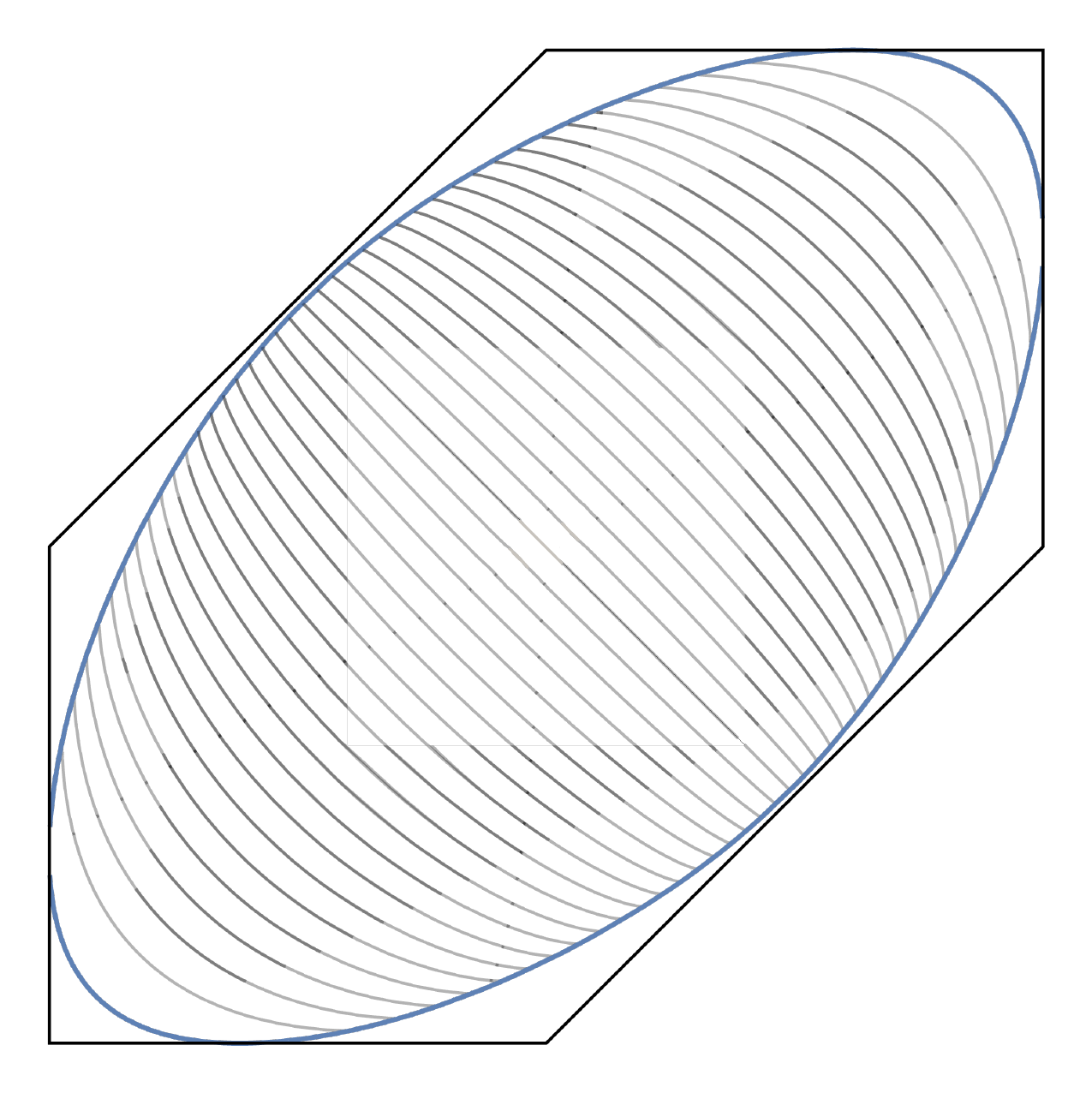}}
\caption{\label{bppexact}The boxed plane partition limit shape, shown with height contours in the repulsive region.
Here $r=0.7$.}
\end{figure}
\begin{figure}[htbp]
\center{\includegraphics[width=1.2in]{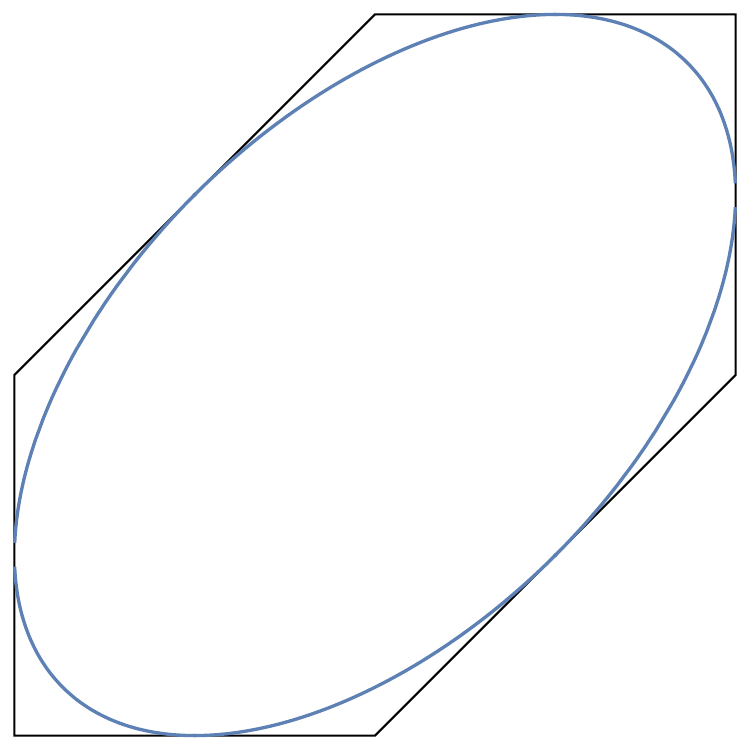}\includegraphics[width=1.2in]{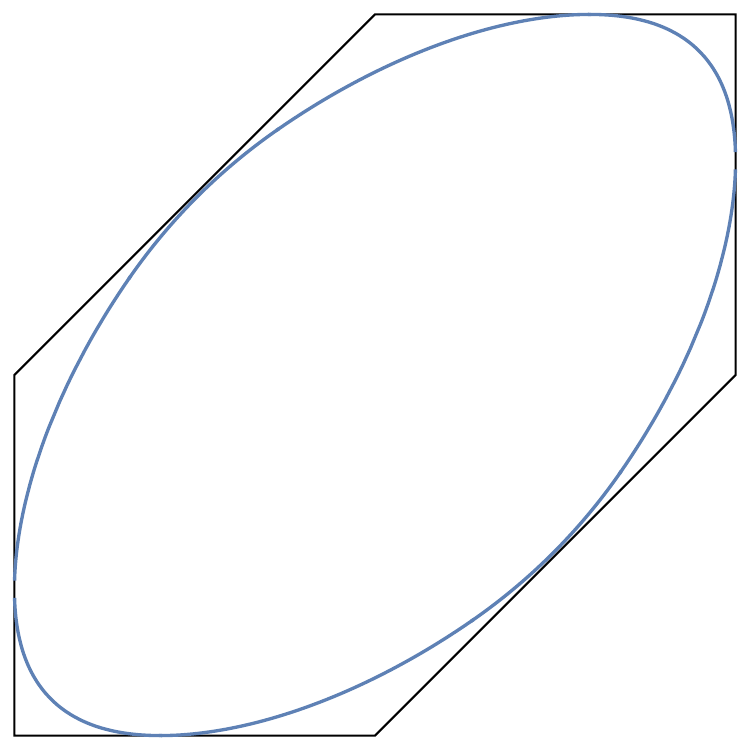}\includegraphics[width=1.2in]{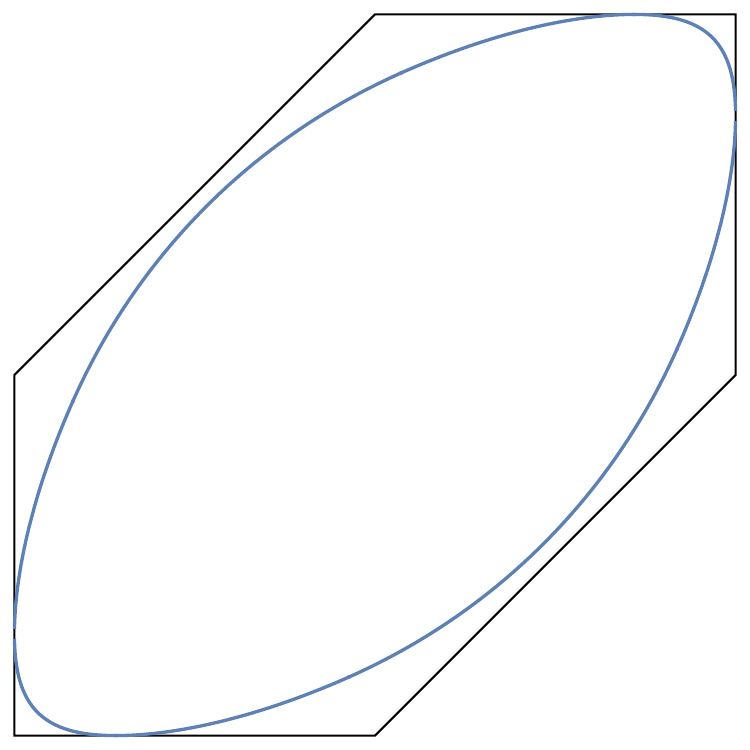}\includegraphics[width=1.2in]{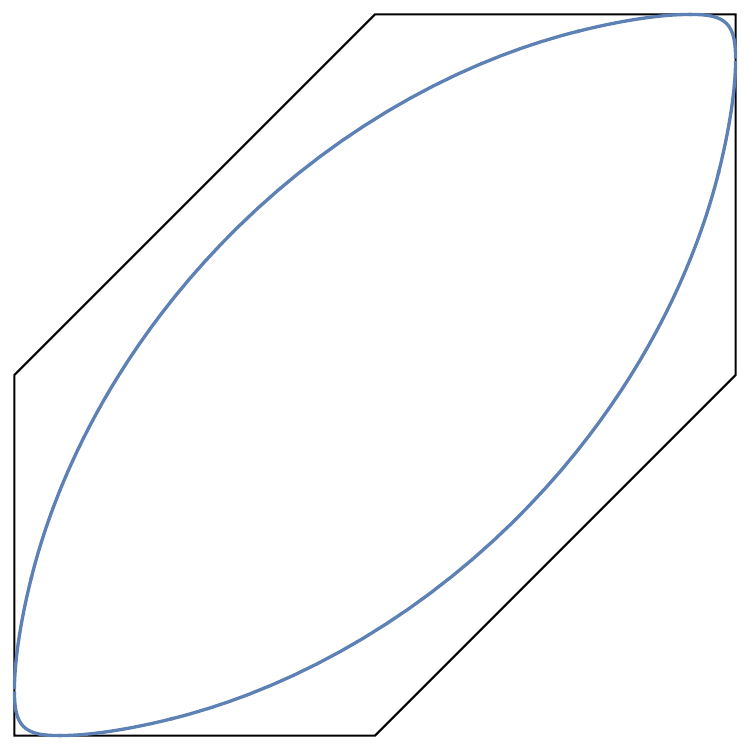}\includegraphics[width=1.2in]{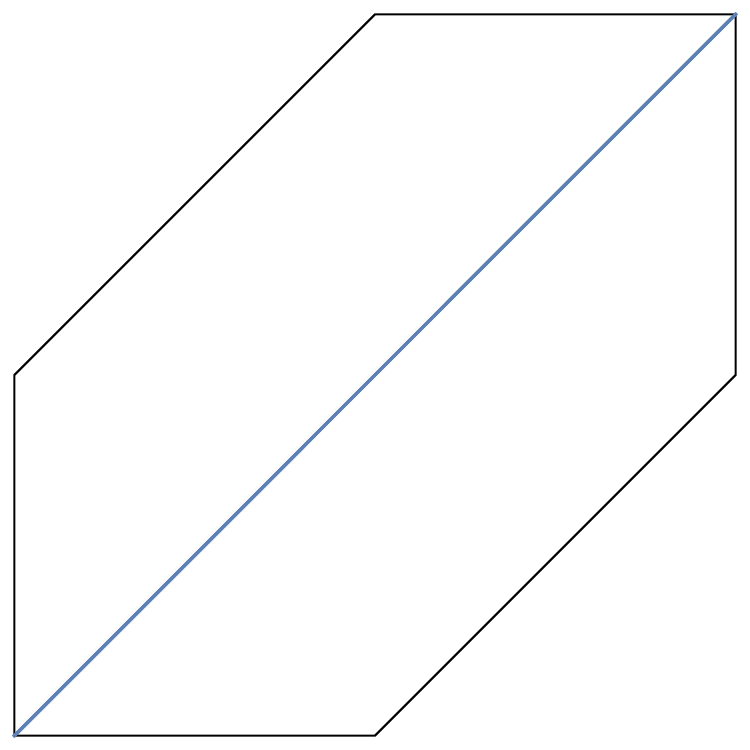}}
\caption{\label{bppbdy}The boxed plane partition repulsive region boundary, for $r=1,\frac56,\frac23,\frac12,\frac13$.
At $r=1/3$ the repulsive region has collapsed. For $r<1/3$ there is no repulsive region.}
\end{figure}

We can argue that this is the correct repulsive region as follows. First note that if we change the definition of
$\sigma$ by adding
to it a linear function of $s$ and $t$, we don't change any limit shape: limit shapes depend on the second derivatives of $\sigma$. It is convenient to add $(s+t)\log(1-r^2)$
so that $\sigma$ is \emph{minimized on the neutral region $\N\setminus\N^*$}. Then we immediately see that
if there is a height function whose gradient lies entirely in the neutral region $\N\setminus\N^*$,
then any such function is a minimizer, and all minimizers will have this form. This is the case for $r<1/3$ as illustrated in Figure \ref{bppPL}: the piecewise linear solution in that figure has gradient in the neutral
region for any $r\le 1/3$ (note that $r=1/3$ is the largest $r$ for which $(1/4,1/4)$ is in the neutral region). 
\begin{figure}[htbp]
\center{\includegraphics[width=2in]{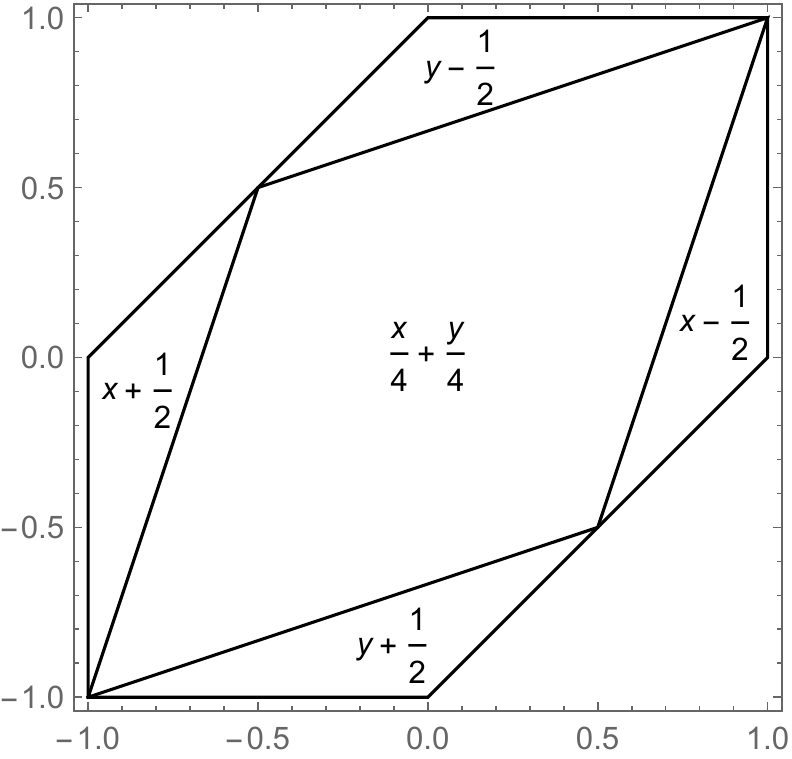}}
\caption{\label{bppPL}A PL minimizer when $r\le 1/3$.}
\end{figure}

For $r>1/3$ there must be a nonempty repulsive region: this follows 
because along the central line from $(-1,-1,-1/2)$ to $(1,1,1/2)$, where we have $s=t$ by symmetry,
there must be a point of slope $s+t=2s\le \frac12$, and such a slope $(s,s)$ is in $\N^*$.
The repulsive region has the same four-fold symmetry as the boundary values,
and each slope $(s,t)\in\N^*$
occurs at two centrally symmetric points (except at the center which is a point of multiplicity two). 
The repulsive region 
must touch the left and right boundaries at $x=\pm1$ when $u=\infty$ and the upper and lower boundaries
($y=\pm1$) when $u=0$. 
Thus $g$ is a double branched cover of $\H$, branched over a certain point $a(r)$ which maps to the center. 
From (\ref{gprime}) and symmetry we see that $g$ above is of the form $q(u)\sqrt{u^2+a(r)u+r^2}$ where $q$ is rational; 
now a somewhat technical argument shows that $q$ and $a(r)$ are uniquely determined to be the above values by the equations (\ref{xinF}) and the fact that $x,y$ remain bounded as $u$ runs over $\R\cup\{\infty\}$.

\section{$r>1$ case}\label{bigr}

\subsection{Maximal eigenvalue: the $r>1$ case}

Recall that for $r>1$ the relevant Bethe roots $w_1,\dots,w_n$ are the $n$ roots of $w^{N-n}(1-w)^n=y$
of smallest real part.
Let $C_\rho$ be the circle centered at $0$ of the 
appropriate radius $\rho$ chosen to enclose $w_1,\dots,w_n$ and no other roots.
If there are two components to the Cassini oval, the circle $C_\rho$ separates the two components only if $N=2n$.
If there is one component, or if $N\ne 2n$, $C_{\rho}$ will hit the Cassini oval at exactly two points.

\subsubsection{Case $N=2n$ and two components}

In case $N=2n$, that is, $s=1/2$, and there are two components,
$C_{\rho}$ will have one component in its inside and one in its outside. 

By Lemma \ref{lem:Mahler}, the integral of $\log|p(w)-y|$ around $C_\rho$ is the sum of the log moduli 
of the roots \emph{outside}
$C_{\rho}$, plus $n\log\rho$. The summation in \eqref{eq:cdef} for the roots inside is obtained by subtracting the sum for the outside roots
from the sum for all the roots, that is, from $\log|y|.$ 

Thus \eqref{eq:cdef} gives
\begin{equation}
\label{bigr1}-\log((r^2-1)\e^{Y}) = \frac1n\left(\log|y| + n\log\rho - \frac1{2\pi}\int_{0}^{2\pi}\log|p(\rho \e^{i\theta})-y|\,d\theta \right).
\end{equation}

In the integral we can ignore the $y$ term up to negligible errors, leading to 
$$-\log((1-r^2)\e^Y)=\frac1{n}\log|y|.$$

For the eigenvalue  \eqref{eq:eval3} we need to compute 
\begin{equation}
  \prod_{i=1}^n 1+(r^2-1)w_i.
\end{equation}
 
The map $w\mapsto 1+(r^2-1)w$ takes points of the oval inside of $C_\rho$ to points inside
the disk $C_R$ at $0$ of radius $R=|1+(r^2-1)w_0|$ (and points outside go to points outside).

We use the same equation as before, (\ref{dp2}), except that (\ref{mid}) gives
$$\log\prod_{i=1}^n 1+(r^2-1)w_i = \log|c_0|-n\log r^2.$$

This leads to 
\begin{equation}
\label{fsybig1} F_m\left(\frac12,Y\right) = \frac{Y}{2}.
\end{equation}

\subsubsection{Case $N\ne 2n$ or one component}\label{onecpt}

In this case 
let $w_0\in\C$ (with $\rho=|w_0|$) be the point in the upper half plane at which the circle of radius $\rho$ intersects the oval. It is convenient to use 
$w_0$ as a new variable. We have $y=-|p(w_0)|$ and we'll see how $r$ and $n/N$ are functions of $w_0$ as well. 
In the formula (\ref{bigr1}), for $w=\rho \e^{i\theta}$ outside the oval,
the integrand is (up to terms tending to zero as $N\to\infty$) $\log|p(w)|$ and for $w$ inside the oval, the integrand is $\log|y|$.

We get up to negligible errors for $N$ large
$$-\log(|1-r^2|\e^{Y})= \log\rho+\frac1n\log|y|-\frac{\arg w_0}{n\pi}\log|y| - 
\frac1{2\pi n}\int_{\arg w_0}^{2\pi-\arg w_0}\log|p(\rho \e^{i\theta})| d\theta$$
and plugging in $p(w)=w^{N-n}(1-w)^n$ and $|y|=|p(w_0)|$ and simplifying  
gives
\begin{align}\label{rfromw2}
  -\log\left(|1-r^2|\e^{Y}\right)&=\log|w_0| + \left(1-\frac{\arg
    w_0}{\pi}\right)\log|1-w_0|-\frac1{\pi}\Im\Li(w_0)\\
    &=\log|w_0|+\log|1-w_0|-B(w_0).\nonumber
\end{align}

The relationship between $w_0$ and $s$ is as follows. 
We suppose $(N-n)/n=p/q$ in lowest terms with $p,q$ odd; the other parities lead to similar arguments. 
Consider the map $\psi(w)= w^{(N-n)/n}(1-w)$ with branch cut along the
positive $x$-axis, and with the branch chosen so that the negative real axis maps to itself. This $\psi$ maps the roots $w_1,\dots,w_n$ to the $n$th roots of $y$, with consecutive roots 
(if the Cassini oval has one component) mapping to consecutive $n$th roots. 
If the Cassini oval has two components, we order the roots $w_j$ in a slightly unconventional way: those in the upper half plane 
are ordered with decreasing real part, and those in the lower half plane are ordered with increasing real part. In this order 
$\psi$ maps the $w_i$ to consecutive $n$th roots of $y$ 
(in particular the leftmost root in $\H$ of the right oval and right most root in $\H$ of the left oval
map to consecutive $n$th roots of $y$). 
By these choices, $\psi(w_0)=|y|^{1/n}e^{\pi i/n}$ and the $\psi$-images of the 
roots wrap exactly once around the circle of radius $|y|^{1/n}$,
regardless of whether the Cassini oval has one or two components.
Thus for the arguments, 
$$-\frac{N-n}{n}(\pi-\arg w_0)+\arg(1-w_0)=\pi/n,$$ 
which implies (for $n$ large) that the triangle with vertices $0,1,w_0$ has \emph{exterior} angles in ratio $(1-s):s$,
that is, if $\theta_1,\theta_2$ are the angles of the triangle at $0,1$ respectively
then $(1-s)(\pi-\theta_1) = s(\pi-\theta_2)$. We define $\theta$ so that these exterior angles are $s\theta$ and $(1-s)\theta$,
see Figure \ref{wbigr}.
\begin{figure}[htbp]
\center{\includegraphics[width=2in]{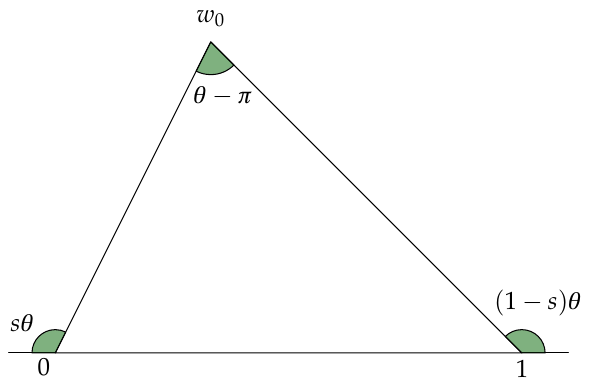}}
\caption{\label{wbigr}Relationship between $w_0,s,\theta$ in the $r>1$ case. }
\end{figure}

This leads to 
$$s=\frac{\pi-\arg w_0}{2\pi-\arg(\frac{w_0}{1-w_0})}.$$

\begin{figure}[htbp]
\center{\includegraphics[width=2.5in]{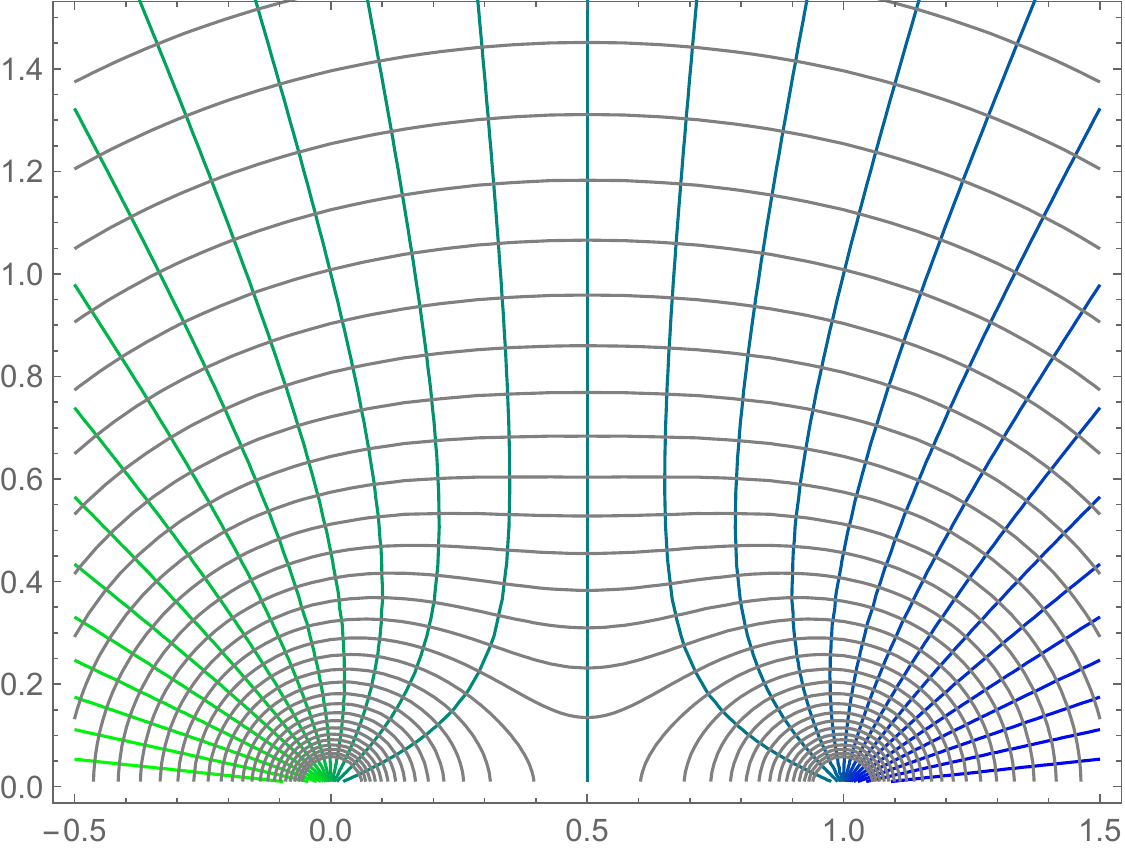}}
\caption{\label{Ywbigr}In blue/green, the level curves of $s=\frac{\pi-\arg w_0}{2\pi-\arg(\frac{w_0}{1-w_0})}$. The orthogonal family of level curves of $Y$ is shown in gray. }
\end{figure}

The monotonicity of $Y$ as a function of $y$ in the limit of large $N$ 
follows from the orthogonality of the level curves of $s$ and those of $Y$, see
Figure \ref{Ywbigr}. For large finite $N$ monotonicity follows by an argument similar to that of Lemma \ref{mono}.

We now compute the eigenvalue \eqref{eq:eval3}.
For this we need to compute 
\begin{equation}\prod_{i=1}^n 1+(r^2-1)w_i.\end{equation}
 
The map $w\mapsto 1+(r^2-1)w$ takes points of the oval inside of $C_\rho$ to points inside
the disk $C_R$ at $0$ of radius $R=|1+(r^2-1)w_0|$ (and points outside go to points outside).

At this point the computation is identical to that for the $r<1$ case, except that $\overline{w}_0$ there is $w_0$ here.
With $u_0=1+(r^2-1)w_0$, the microcanonical free energy is 
\begin{equation} F_m(s,Y)= (1-s)Y+(1-s)B(u_0)+sB\left(1-\frac{r^2}{u_0}\right).
\end{equation}

\subsection{Legendre transform, $r>1$ case}

Using $Y=-\log(r^2-1)+B(w_0)-\log|w_0(1-w_0)|,$ we compute using (\ref{dB})
\begin{align*}\frac{dY}{d\theta} &= \frac{dB(w_0)}{d\theta} - \frac{d}{d\theta}\log|w_0(1-w_0)|\\
&=\left(1-\frac{s\theta}{\pi}\right)\frac{d}{d\theta}\log|1-w_0| + \left(1-\frac{(1-s)\theta}{\pi}\right) \frac{d}{d\theta}\log|w_0| - \frac{d}{d\theta}\log|w_0(1-w_0)|\\
&=-\frac{s\theta}{\pi}\frac{d}{d\theta}\log|1-w_0| -\frac{(1-s)\theta}{\pi}\frac{d}{d\theta}\log|w_0|.
\end{align*}

We have
\begin{align*}\frac{dF_m(s,Y)}{d\theta} &= (1-s)\frac{dY}{d\theta}  +(1-s)\frac{dB(u_0)}{d\theta}+s\frac{dB(1-\frac{r^2}{u_0})}{d\theta}\\
&= (1-s)\frac{dY}{d\theta}  +(1-s)\phi\frac{d}{d\theta}\log|1-u_0| +s\phi\frac{d}{d\theta}\log|u_0-r^2|\\
&= (1-s)\frac{dY}{d\theta}  +(1-s)\phi\frac{d}{d\theta}\log\frac{|1-u_0|}{r^2-1} +s\phi\frac{d}{d\theta}\log\frac{|u_0-r^2|}{r^2-1}\\
&= (1-s)\frac{dY}{d\theta}  +\phi\left((1-s)\frac{d}{d\theta}\log|w_0| +s\phi\frac{d}{d\theta}\log|1-w_0|\right)\\
&= \left(1-s-\frac{\phi}{\theta}\right)\frac{dY}{d\theta}.\\
\end{align*}

Thus
$$t=\frac{dF_m(s,Y)}{dY} = 1-s-\frac{\phi}{\theta}$$
and $\phi=(1-s-t)\theta$
as in the $r<1$ case. See Figure \ref{rbigtri}.

\begin{figure}[htbp]
\center{\includegraphics[width=4in]{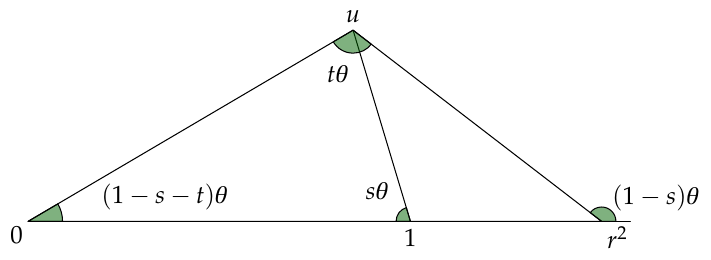}}
\caption{\label{rbigtri}Relationship between $u,s,t,\theta$ in the $r>1$ case.}
\end{figure}

The surface tension is then 
\begin{align}\sigma(s,t) &=-F_m(s,Y)+Yt \nonumber\\
&=  -(1-s-t)Y-(1-s)B(u_0)-sB\left(1-\frac{r^2}{u_0}\right)\nonumber\\
&=(1-s-t)(\log(r^2-1)-B(w_0)+\log|w_0(1-w_0)|)-(1-s)B(u_0)-sB\left(1-\frac{r^2}{u_0}\right).\label{stensionbig}
\end{align}

The surface tension and free energy are shown in Figure \ref{rbigsig}

\begin{figure}[htbp]
\center{\includegraphics[width=3in]{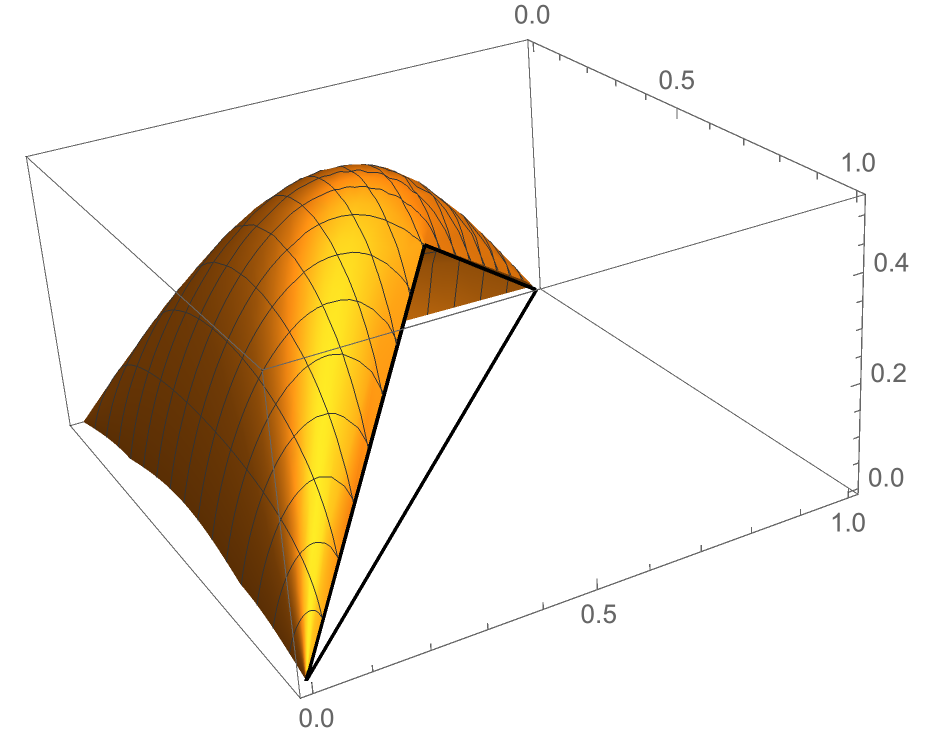}\includegraphics[width=3in]{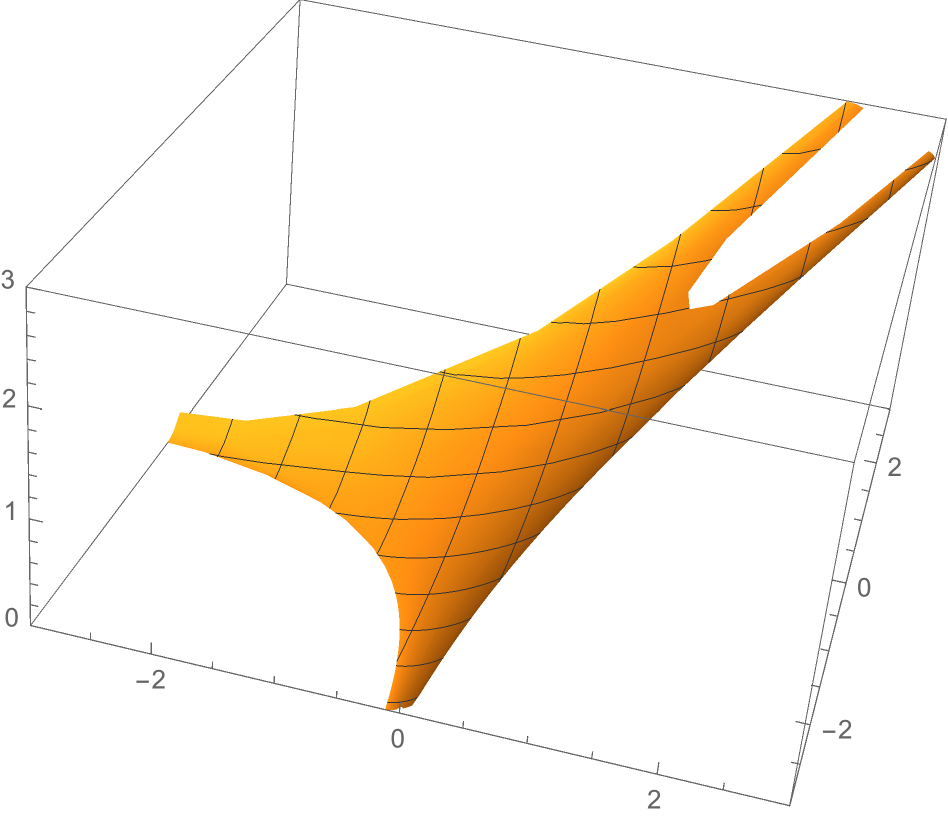}}
\caption{\label{rbigsig}Minus the surface tension and free energy in case $r=1.3$. The surface tension is analytic on $\N$, zero on $s=0$ and $t=0$
and equal to $\min\{1-s,1-t\}2\log r$ when $s+t=1$. The free energy is linear on the complementary components of
the curved part illustrated.}
\end{figure}

\subsection{Euler-Lagrange equation, $r>1$ case}

The $r>1$ case is quite similar to the $r<1$ case.
We define $z$ where $\bar z$ is the analog of $w$ when the roles of $s$ and $t$ are reversed.
Then, in terms of $u$, we have
\be \label{wzfromubigr} w=\frac{u-1}{r^2-1},\qquad z = \frac{r^2/u-1}{r^2-1}\ee
and 
\begin{equation}
\label{spcurvebig}1-w-z-(r^2-1)wz=0.\end{equation}
Note that this is the \emph{same relation} as in the $r<1$ case (Lemma \ref{spcurvesmall}).

We have 
\begin{align}X &= -\log(r^2-1)+B(\bar z)-\log|z(1-z)|\notag\\
Y &= -\log(r^2-1)+B(w)-\log|w(1-w)|\label{XYbigr}
\end{align}
and again (\ref{EL1}) and (\ref{EL2}) combine to give
\begin{prop}
 \label{prop:ELrbig}
For $r>1$ the EL equation (\ref{EL1}), combined with the mixed partials equation (\ref{EL2}), reduces to 
\begin{equation}
\label{EL3rbig}\left(-B(z)_z-\frac{1-2z}{2z(1-z)}\right)z_x+\left(B(w)_w-\frac{1-2w}{2w(1-w)}\right)w_y=0.\end{equation}
\end{prop}

Again using $\frac{dw}{w(1-w)}=-\frac{dz}{z(1-z)}$ and $w_xx_{\bar w}+w_yy_{\bar w}=0$ this can be written
\begin{lemma}
Let $r>1$.
The $(x,y)$ coordinates of the limit shape satisfy the linear first-order PDE
\begin{equation}\label{argentbigr}(\A(w)-(1-2w))x_{\overline w} - (\A(z)+(1-2z))y_{\overline w}=0.\end{equation}
\end{lemma}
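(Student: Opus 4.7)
The plan is to mirror the proof of the $r<1$ case (equation (\ref{argent})), since the PDE (\ref{EL3rbig}) has the same structural form as (\ref{EL3}) but with each $B_w$-type term shifted by a rational piece. The steps I would carry out in order:

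First, I would rewrite (\ref{EL3rbig}) using $B_w(w)\,w(1-w)=\A(w)$ (and likewise for $z$), which follows from the definition of $\A$ recorded after (\ref{argent}). Factoring $1/(w(1-w))$ and $1/(z(1-z))$ out of the two terms in (\ref{EL3rbig}) yields the equivalent equation
\begin{equation*}
-\frac{\A(z)+(1-2z)}{z(1-z)}\,z_x-\frac{\A(w)-(1-2w)}{w(1-w)}\,w_y=0.
\end{equation*}

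Next, I would use the special-curve relation (\ref{spcurvebig}), $1-w-z-(r^2-1)wz=0$. Differentiating this with respect to $x$ (and exploiting $1+(r^2-1)z=(1-z)/w$, $1+(r^2-1)w=(1-w)/z$, which come from the same relation) produces the key identity
\begin{equation*}
\frac{z_x}{z(1-z)}=-\frac{w_x}{w(1-w)},
\end{equation*}
the exact analog of the relation used to derive (\ref{argent}) in the $r<1$ case. Substituting this into the previous display and clearing the common factor $w(1-w)$ reduces the EL equation to
\begin{equation*}
(\A(z)+(1-2z))\,w_x-(\A(w)-(1-2w))\,w_y=0,
\end{equation*}
a first-order linear equation for $w=w(x,y)$ with complex coefficients.

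Finally, I would convert this into a PDE for the inverse mapping $(x(w,\overline w),y(w,\overline w))$. Applying $\partial/\partial\overline w$ to the identities $w=w(x(w,\overline w),y(w,\overline w))$ gives the standard relation
\begin{equation*}
w_x\,x_{\overline w}+w_y\,y_{\overline w}=0,
\end{equation*}
so $x_{\overline w}/y_{\overline w}=-w_y/w_x$. Combining this ratio with the ratio $w_y/w_x=(\A(z)+(1-2z))/(\A(w)-(1-2w))$ obtained above yields (\ref{argentbigr}). The main (and only real) obstacle is bookkeeping: one must be careful that the $(1-2w)$ and $(1-2z)$ shifts do not collapse or mix with the $\A$ terms when multiplying through by $w(1-w)z(1-z)$, and that the sign produced by the inverse-mapping step matches the form of (\ref{argentbigr}); everything else is a direct transcription of the $r<1$ argument with the Lemma~\ref{spcurvesmall}-style relation replaced by (\ref{spcurvebig}).
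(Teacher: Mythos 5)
Your approach is exactly the paper's: start from (\ref{EL3rbig}), rewrite the $B_z,B_w$ derivatives via $B_z\,z(1-z)=\A(z)$ and $B_w\,w(1-w)=\A(w)$, use the differential consequence $\frac{z_x}{z(1-z)}=-\frac{w_x}{w(1-w)}$ of the curve relation (\ref{spcurvebig}) to pass to a $(w_x,w_y)$-form, and then invoke the inverse-mapping identity $w_x x_{\overline w}+w_y y_{\overline w}=0$. The paper gives no more detail than this (it just says ``Again using $\frac{dw}{w(1-w)}=-\frac{dz}{z(1-z)}$ and $w_xx_{\bar w}+w_yy_{\bar w}=0$ this can be written''), so you have filled in precisely the intended argument, and your derivation of the key identity from $1-w-z-(r^2-1)wz=0$ using $1+(r^2-1)z=(1-z)/w$, $1+(r^2-1)w=(1-w)/z$ is correct.

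One point worth making explicit, since you flagged it yourself: carrying out the algebra literally from (\ref{EL3rbig}) as printed yields $(\A(z)+(1-2z))w_x-(\A(w)-(1-2w))w_y=0$, and then the inverse-mapping step (``$Pw_x+Qw_y=0$ plus $w_xx_{\overline w}+w_yy_{\overline w}=0$ gives $Qx_{\overline w}-Py_{\overline w}=0$'') produces $(\A(w)-(1-2w))x_{\overline w}+(\A(z)+(1-2z))y_{\overline w}=0$, with a \emph{plus} sign on the $y_{\overline w}$ term rather than the minus written in (\ref{argentbigr}). This is a sign inconsistency internal to the paper: the expansion of ``(\ref{argentbigr})'' displayed at the top of the subsequent Integration subsection, $(-w(\arg w-\pi)-(1-w)(\arg(1-w)+\pi))x_{\overline w}-(z(\arg z+\pi)+(1-z)(\arg(1-z)-\pi))y_{\overline w}=0$, is in fact the negation of the $y_{\overline w}$ coefficient $-(\A(z)+(1-2z))$ appearing in the lemma (modulo the paper's omitted $\pi$ factors), i.e.\ it corresponds to the plus-sign version. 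So your worry about the sign matching is well founded; your derivation is the correct one, and the minus in (\ref{argentbigr}) as stated appears to be a typo (equivalently, the minus in front of the $w_y$ term of (\ref{EL3rbig}) should be a plus). You should state this explicitly rather than asserting the computation ``yields (\ref{argentbigr})''.
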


\subsection{Integration}

Equation (\ref{argentbigr}) for $r>1$ is
$$(-w(\arg w-\pi)-(1-w)(\arg(1-w)+\pi)x_{\overline w}+(z(\arg z+\pi)+(1-z)(\arg(1-z)-\pi)y_{\overline w}=0$$
and dividing by $\theta,$
$$(ws-(1-w)(1-s))x_{\overline w} + (zt-(1-z)(1-t))y_{\overline w}  =0,$$
so we get the same equation (\ref{deriv}) as in the $r<1$ case.

As in that case the equation (\ref{deriv}) becomes
$$(w-1)x_{\overline w}+ (z-1)y_{\overline w} + H_{\overline w}=0,$$ which we can integrate to get
\begin{theorem} \label{univlarger} For $r\ne 1$ the following equation holds:
\begin{equation}\label{Heq2} (w-1)x+(z-1)y+H + f(w) = 0\end{equation}
where $f$ is an analytic function depending only on boundary conditions.
\end{theorem}
 
With $u=\RR \e^{i\phi}$ taking the imaginary part of (\ref{Heq2}) gives (absorbing the $(r^2-1)$ factor into $f$)
\begin{equation}\label{yfromx2}x-\frac{r^2}{\RR^2}y = \frac{g(u)-\overline{g(u)}}{u-\overline u}\end{equation}
where $g(u)=-(r^2-1)f(w)$. For any given $g$ this can be solved for $y$, and, plugging back into (\ref{argentbigr}),
gives an equation of the form $$A_1x_u+ A_2x =A_3$$ where
$A_1,A_2,A_3$ are functions of $u$. This can be integrated by standard techniques. 

As an example, take $g\equiv 0$. Then
$$(\A(w)-(1-2w))x_{\bar u} - (\A(z)+(1-2z))\left(x\frac{\RR^2}{r^2}\right)_{\bar u}=0.$$
Rearranging gives
$$\frac{x_{\bar u}}{x} = \frac{(\A(z)+1-2z)u}{r^2(\A(w)+2w-1)-u\bar u(\A(z)+1-2z)}$$
and using Lemma \ref{Ader}, we note that the numerator is minus the $\bar u$-derivative of the denominator, so
$$x=\frac{C}{r^2(\A(w)+2w-1)-u\bar u(\A(z)+1-2z)}$$
for a real constant $C$. 
See Figure \ref{bigrfund}.
\begin{figure}[htbp]
\center{\includegraphics[width=3in]{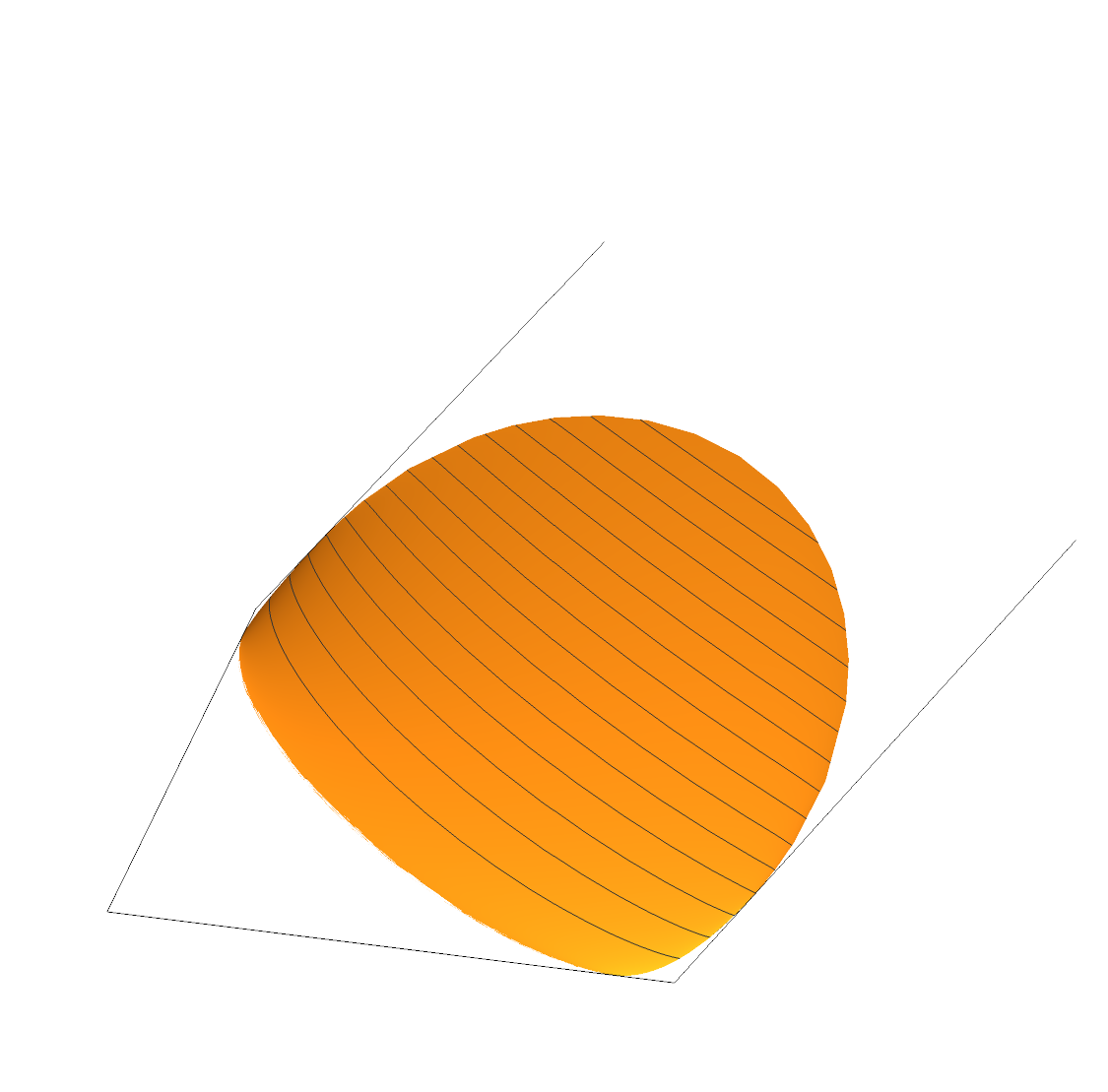}}
\caption{\label{bigrfund}The ``thumbnail": the $g\equiv 0$ limit shape for $r=3.1$. (It is also one of the 
two liquid region
limit shapes for the boxed plane partition when the diagonal side is sufficiently long: see Figure \ref{bppbigr} below).}
\end{figure}

The analog of equation (\ref{x}) for $r>1$ for general $g$ is
$$x = \frac1{r^2(\A(w)-1+2w)-\RR^2(\A(z)+1-2z)}\left[
\Im\left(\frac{1}{\pi}\int \frac{r^2g(u)}{(r^2-u)(u-1)}du\right)+
(\A(z)+1-2z)\RR^2k(u)-\frac{g(u)r^2\theta}{\pi(r^2-1)}\right].$$

The arctic boundary is piecewise analytic as before; the formulas for $p\in\R$ are 
\begin{equation}\label{xinF2}
x=\begin{cases}
\displaystyle F_p\frac{(r^2-2p)(1-p)^2}{r^2(p^2-2p+r^2)}+F_{pp}\frac{p(r^2-p)(1-p)^2}{r^2(p^2-2p+r^2)} 
- \frac{c_1(1-r^2)}{\pi r^2(p^2-2p+r^2)}&p<0\\[.15in]
\displaystyle F_p\frac{1-2p}{r^2}+F_{pp}\frac{p(1-p)}{r^2}- \frac{c_2(1-r^2)}{\pi(p-r^2)^2}&0<p<1\\[.15in]
{\scriptstyle F_p\frac{r^4 p^2-4 r^4 p+2 r^4-2 r^2 p^3+8 r^2 p^2-4 r^2 p-2 p^3+p^2}{r^2 \left(r^4+r^2
   p^2-4 r^2 p+r^2+p^2\right))}+F_{pp}\frac{(p-1) p(r^2-p)(p r^2+p-2 r^2)}{r^2(p^2 r^2+p^2-4 p
   r^2+r^4+r^2)} + \frac{2c_3(r^2-1)}{\pi(p^2 r^2+p^2-4 p r^2+r^4+r^2)}}&1<p<r^2\\[.15in]
\displaystyle F_p\frac{r^2-2p}{r^2}+F_{pp}\frac{p(r^2-p)}{r^2}- \frac{c_4(1-r^2)}{\pi r^2(p-1)^2}&r^2<p\end{cases}
\end{equation}
for constants $c_1,c_2,c_3,c_4$ which depend on the height function on each facet.
The first, second and fourth terms are identical to that in the $r<1$ case. All four of these can be obtained
directly from (\ref{Heq2}) and (\ref{yfromx2}) by solving for $x$ and $y$.

Figure \ref{bppbigr} shows the arctic boundary for the boxed plane
partition. For $1\le r\le 3$,  $g$ is given by (\ref{gbpp}); this is the
same as the expression for $g$ (but with different $r$) when $\tfrac13<r<1$. For $1\le r\le 3,$ there is
one interior component, which breaks up for $r>3$ into two components, each of which is equivalent up to isometry 
to the $g=0$ case.
\begin{figure}[htbp]
\center{\includegraphics[width=1.7in]{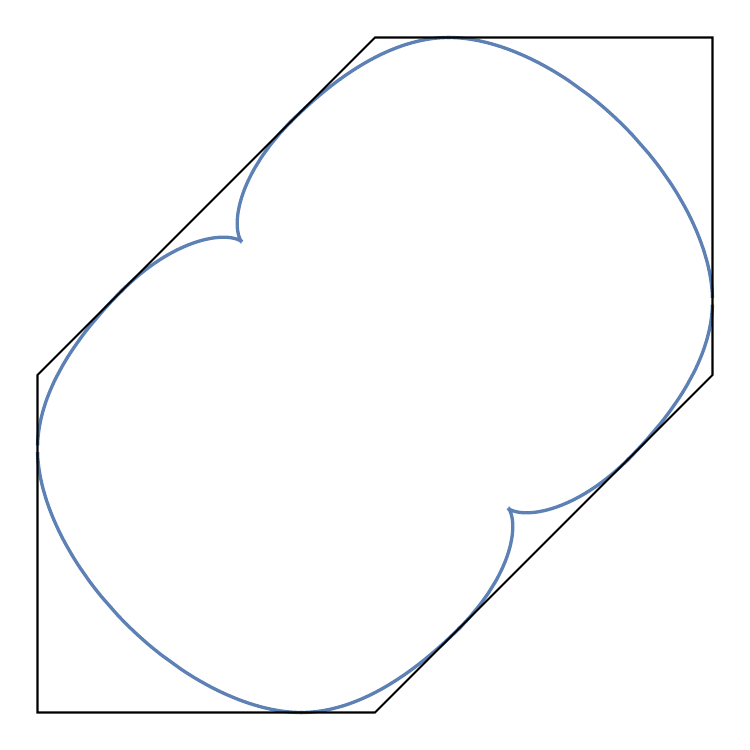}\includegraphics[width=1.7in]{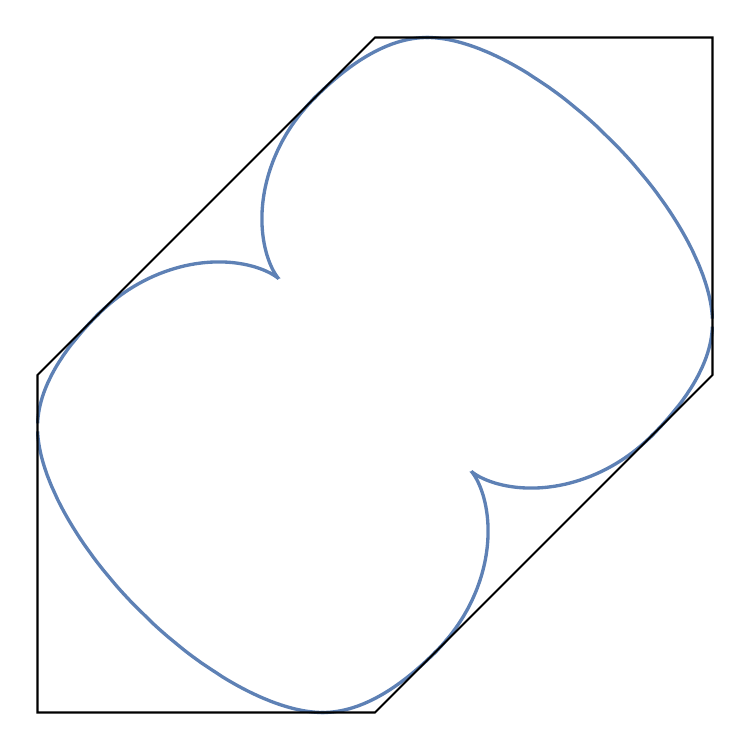}\includegraphics[width=1.7in]{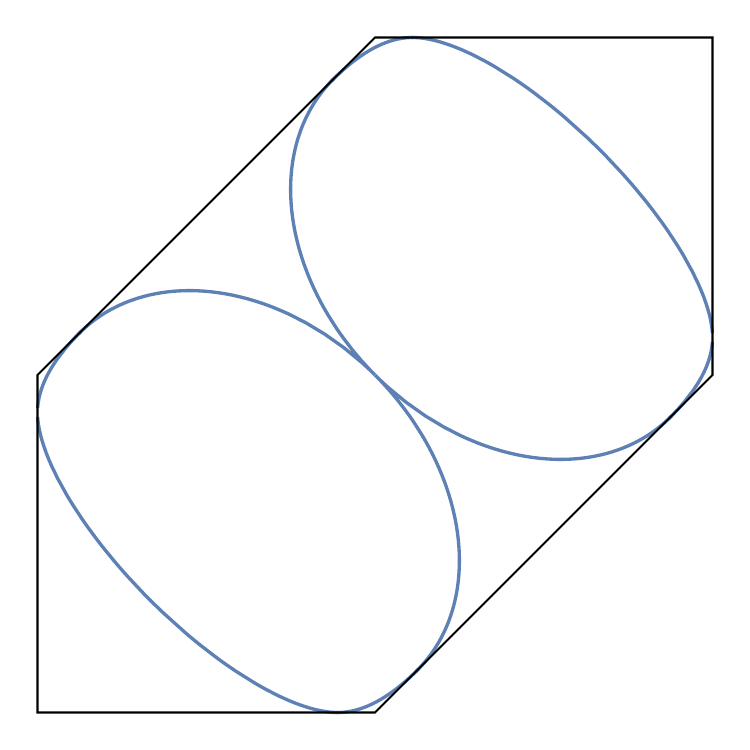}\includegraphics[width=1.7in]{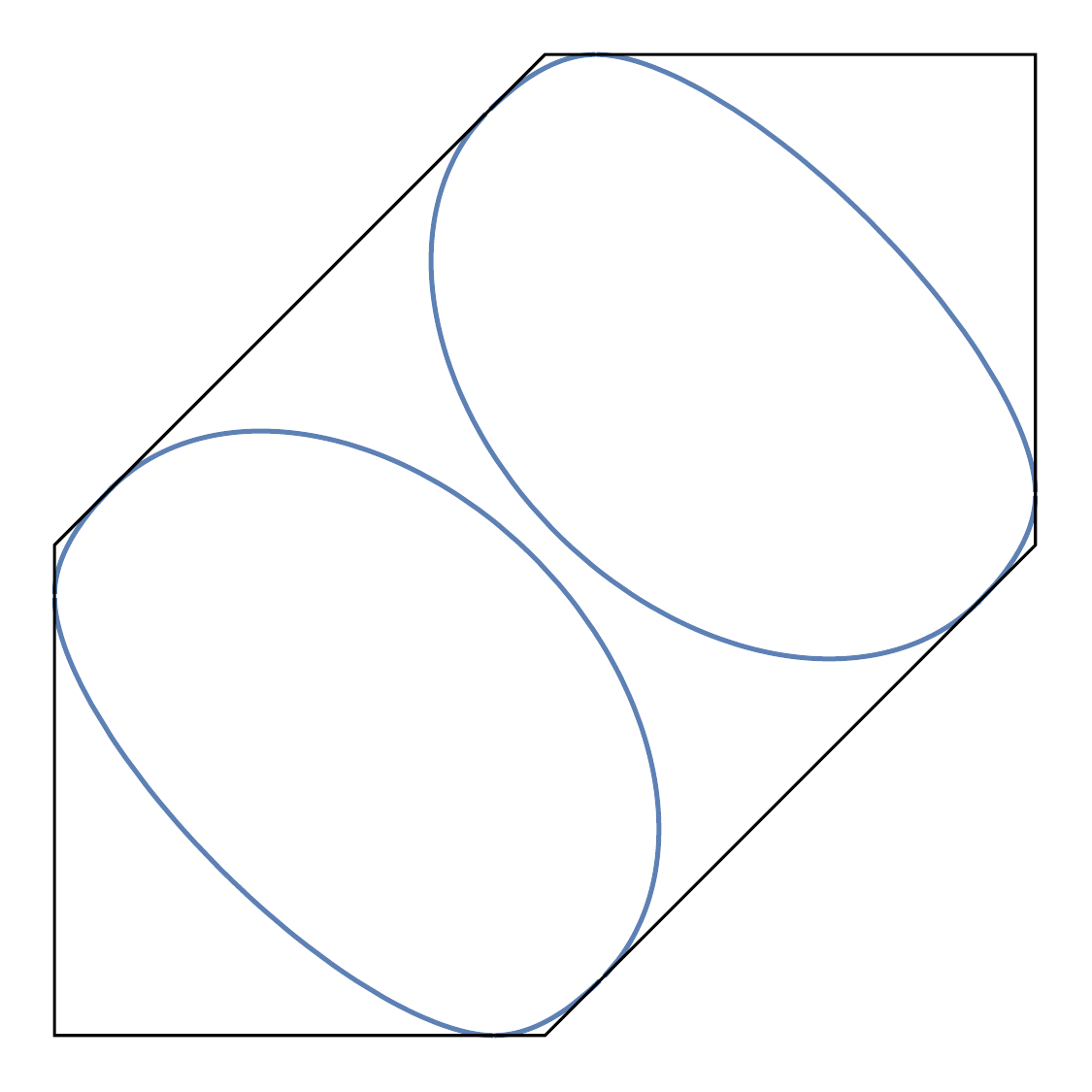}}
\caption{\label{bppbigr}The arctic boundary for the boxed plane partition in the cases $r=2,2.5,3,3.1$.}
\end{figure}

\begin{figure}[htbp]
  \center{\includegraphics[width=3.2in]{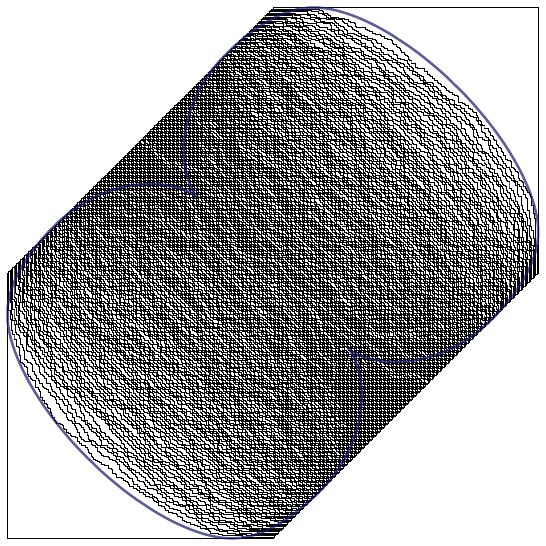}
          \includegraphics[width=3.2in]{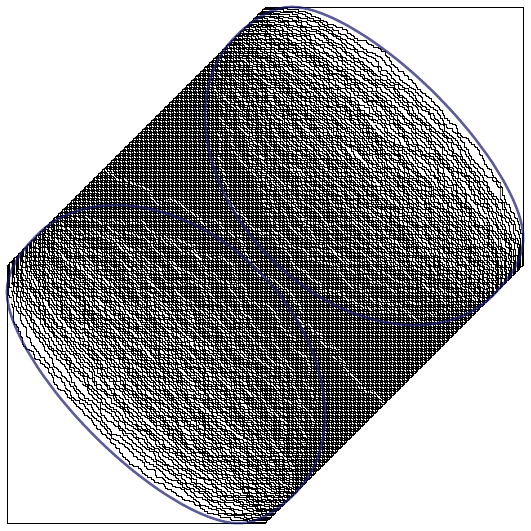}}
\caption{\label{bppbigrsim2}Simulations with $n=400$ and $r = 2.5$
  (left) and $r = 3.1$ (right), with the theoretical arctic boundary
  superimposed.}
\end{figure}

\section{Leading eigenvalue}\label{perron}
Here we explain why the eigenvalue we computed is the leading eigenvalue. This follows
by analytic continuation from the $r=1$ case.

When $r=1$, the unnormalized Bethe roots $\zeta_i$ are distinct $N$th roots of $1$, and the corresponding eigenvectors are ``antisymmetrized plane waves''
$$f_{\zeta_1,\dots,\zeta_n}(x_1,\dots,x_n) = \sum_{\sigma\in S_n} (-1)^{\sigma}\zeta_{\sigma(1)}^{x_1}\dots \zeta_{\sigma(n)}^{x_n}.$$ 
These form a complete eigenbasis: the exponentials $\zeta_1^{x_1}\dots \zeta_n^{x_n}$ are a complete
eigenbasis on the torus $(\Z/N\Z)^n$ (as the $\zeta_i$ take on all possible $N$th roots of unity), 
and the antisymmetrization operator $\A$ defined by
$$\A g(x_1,\dots, x_n) = \sum_{\sigma\in S_n}(-1)^{\sigma}g(x_{\sigma(1)},\dots,x_{\sigma(n)})$$
projects the Fourier basis to a basis for the antisymmetric functions on the torus $(\Z/N\Z)^n$, which we can identify
with the space of functions $\{f(x_1,\dots,x_n)\}$ with strictly increasing indices.

By the Perron-Frobenius theorem 
and strong connectivity of the state space,
the leading eigenvector at any value of $r$ is the unique eigenvector with
nonnegative entries, and its eigenvalue is the unique maximal eigenvalue. 
Our computed eigenvector and eigenvalue are analytic functions of the Bethe roots which
are themselves analytic in $r$
(and tend as $r\to1$ to the $N$th roots of $1$) in an interval $(r_0^-,r_0^+)$ around $1$. The only potential source of
nonanalyticity is when two real roots merge to become two complex roots, or vice versa. 
More precisely, this will lead to a nonanalyticity if a Bethe root merges with another root of $p(w)=y$ which is not a Bethe root. This indeed happens for $r<1$ at a certain value $r_0^-<1$, at the point where the Cassini oval breaks into two components, as illustrated in Figure \ref{noseplot}. 
For $r>1$  and $n=N/2$, this also happens at the point where the Cassini oval breaks into two components. For $r>1$ and $n\ne N/2$, this does not happen, so $r_0^+=\infty$ in these cases
(in these cases, there is indeed a point where two roots merge and separate, but they are either both Bethe roots
or neither is a Bethe root, and in either case the
expression for the Bethe eigenvalue and eigenvector--which are symmetric in the Bethe roots--are still analytic functions of these roots).

As we move $r$ continuously away from $1$, the leading eigenvalue never meets another eigenvalue,
and thus the eigenvector is the leading eigenvector throughout $r\in (r_0^-,r_0^+)$.

When $r$ decreases below $r_0^-$ or increases beyond $r_0^+$, the analytic continuation no longer necessarily holds. However in the $r<1$ case we do not in fact need the leading eigenvector for $r<r_0^-$; Lemma \ref{frozen} shows that at or beyond the point where the Cassini oval separates into two components,
the free energy in the large $N$ limit is $F(X,Y) = \max(X,Y)+\log(1-r^2)$. 
Likewise in the $r>1$ case with $n=N/2$, the free energy $F(X,Y)$ for $r\ge r_0^+$ is predetermined: 
the limiting free energy $F(X,Y)$ as $r\nearrow r_0^+$ tends to $F(X,Y) = X/2+Y/2+\log r$ at all points corresponding to $(s,t)=(1/2,1/2)$,
that is, at points $(X,Y)$ along the curve
$$(X,Y)=(-\log(r^2-1)-\log|z(1-z)|,-\log(r^2-1)-\log|w(1-w)|)$$ defining the boundary of relevant component of the ``amoeba'' complement, 
see equations (\ref{XYbigr}) and (\ref{wzfromubigr}). By convexity the free energy necessarily
equals $ (X+Y)/2 + \log r$ beyond this curve as well. As a consequence we don't need an expression for the leading eigenvector for $r>r_0^+$.

\section{Appendix}

\subsection{Mahler measure}

\begin{lemma}\label{lem:Mahler}
If $q(x)$ is a monic polynomial of degree $d$ with no roots of modulus $R$ then
$$\frac1{2\pi}\int_{0}^{2\pi} \log|q(R\e^{i\theta})| d\theta =  \sum_{|r_i|>R} \log|r_i| + \sum_{|r_i|<R} \log R$$
where the first sum is over roots $r_i$ of modulus greater than $R$ and the second sum is over the remaining roots (both with multiplicity).
\end{lemma}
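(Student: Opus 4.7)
The plan is to reduce this to the classical Jensen formula by factoring the polynomial into linear factors and then evaluating one root at a time.

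First, since $q$ is monic of degree $d$, I would write $q(x) = \prod_{i=1}^d (x - r_i)$, so that
\[
  \frac{1}{2\pi}\int_0^{2\pi}\log|q(Re^{i\theta})|\,d\theta = \sum_{i=1}^d \frac{1}{2\pi}\int_0^{2\pi}\log|Re^{i\theta}-r_i|\,d\theta.
\]
It then suffices to show that each root contributes $\log|r_i|$ if $|r_i|>R$ and $\log R$ if $|r_i|<R$. (The hypothesis that no root has modulus $R$ ensures each integrand is integrable.)

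Next I would handle the two cases separately. If $|r_i|>R$, factor out $r_i$ to write
\[
  \log|Re^{i\theta}-r_i| = \log|r_i| + \log\bigl|1 - (R/r_i)e^{i\theta}\bigr|.
\]
The function $z\mapsto \log|1-z|$ is harmonic on $|z|<1$ (since the singularity at $z=1$ lies outside the disk of radius $R/|r_i|<1$ being averaged over), and its value at the origin is $0$; by the mean value property of harmonic functions the angular average of the second term is $0$, leaving $\log|r_i|$. Symmetrically, if $|r_i|<R$ I would pull out $R$:
\[
  \log|Re^{i\theta}-r_i| = \log R + \log|e^{i\theta} - r_i/R|,
\]
and the standard evaluation $\frac{1}{2\pi}\int_0^{2\pi}\log|e^{i\theta}-a|\,d\theta=\max(\log|a|,0)$ for $|a|\neq 1$ (Jensen's formula for the function $z-a$) gives $0$ since $|r_i/R|<1$, leaving $\log R$. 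Summing over $i$ yields the claimed identity.

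There is essentially no obstacle here; the only point requiring care is the degenerate case $r_i=0$, which is automatic since then $\log|Re^{i\theta}-0| = \log R$ pointwise and the contribution is again $\log R$, consistent with the second sum. Alternatively one could give a one-line proof by invoking Jensen's formula directly for $q$ on the disk of radius $R$, after separating out any vanishing at the origin; the root-by-root derivation above is what that approach unwinds to.
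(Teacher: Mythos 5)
Your proof is correct and takes essentially the same approach as the paper: factor $q$ into linear terms, reduce to a single-root integral, and evaluate that integral via the harmonicity of $\log|\cdot|$ and the mean value property (which is exactly what the "Jensen's formula" step unwinds to). The paper phrases it a bit more compactly by computing $\frac{1}{2\pi}\int_0^{2\pi}\log|a+b\e^{i\theta}|\,d\theta = \log\max(|a|,|b|)$ in one go, but the underlying argument is the same.
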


\begin{proof}
  Since $z\mapsto \log |z|$ is harmonic away from the origin, the circle average    $\frac{1}{2\pi}\int_0^{2\pi}\log|a+b\e^{i\theta}|\,d\theta$ is equal to the value of $\log|z|$ at the center of the circle, namely $\log |a|$, as long as $|a|> |b|$. If $|a| < |b|$, then we can multiply by the argument of the logarithm by $1 = |\e^{-i\theta}|$ and then apply the same idea. So altogether, we have 
  \[
    \frac{1}{2\pi}\int_0^{2\pi}\log|a+b\e^{i\theta}|\,d\theta = \log(\max(|a|,|b|)). 
  \]
  The result then follows from summing over the roots of $q$. 
\end{proof}

\subsection{Dilogarithm}

For further information on the dilogarithm see \cite{zagier2007dilogarithm}.
The dilogarithm function $\Li(z)$ is defined by 
$$\Li(z) = -\int_0^z\log(1-\zeta)\,\frac{d\zeta}{\zeta}$$
for $z\not\in[1,\infty)$. 
From this formula we have the integral
\begin{equation}\label{backint}
\int_\theta^{2\pi-\theta}\log|1-r\e^{it}|\,dt = 2\Im\Li(r\e^{i\theta}).
\end{equation}

The Bloch-Wigner dilogarithm $D(z)$ is a variant of the dilogarithm defined by
$$D(z) = \arg(1-z)\log|z| + \Im\Li(z).$$
Here $D(z)$ is continuous on $\C$, real analytic except at $z=0,1$, and satisfies
the identities
$$D(z)=D\left(1-\frac1z\right)=D\left(\frac{1}{1-z}\right)=-D\left(\frac1z\right)=-D\left(1-z\right)=-D\left(\frac{-z}{1-z}\right).$$
In terms of the Lobachevsky function
$$L(\theta)=-\int_0^{\theta}\log(2\sin t)\,dt,$$ we have
$$D(z) = L(\alpha)+L(\beta)+L(\gamma)$$
where $\alpha,\beta,\gamma$ are the arguments of $z,1-\frac1z,\frac1{1-z}$ respectively, that is, the angles of the triangle
with vertices $0,1,z$. 

We also define 
\begin{align}\label{Btriangleform}B(z) &= \frac1{\pi}(D(z) + \alpha\log a+\beta\log b+\gamma\log c)\\
\nonumber&=\frac1{\pi}(\arg z\log|1-z|+\Im\Li(z))
\end{align}
where $a,b,c$ are the lengths of the edges of the $0,1,z$ triangle.

Note that for a 1-parameter family of triangles, from (\ref{Btriangleform}) we have
\begin{align*} 
  dB = \frac1{\pi}\Big(\log a\, d\alpha+\alpha\, d\log a&+\log b\, d\beta + \beta\, d\log b + \\ &\log c\, d\gamma+\gamma\, d\log c - \log(2\sin\alpha)\,d\alpha - \log(2\sin\beta)\,d\beta-\log(2\sin\gamma)\,d\gamma\Big), 
\end{align*}
which simplifies (using $d\alpha+d\beta+d\gamma=0$) to 

\begin{equation}\label{dB}
dB=\frac{\alpha}{\pi}\frac{da}{a} +\frac{\beta}{\pi}\frac{db}{b} +\frac{\gamma}{\pi}\frac{dc}{c}.
\end{equation}

We also have with $z=z_1+iz_2$
\begin{equation}\label{Bx}
\frac{\partial B(z)}{\partial z_1} = -\frac1{\pi}\Im\log(z)\Re\left(\frac{1}{1-z}\right)-\frac1{\pi}\Re\left(\frac1z\right)\Im\log(1-z),\end{equation}
and
\begin{equation}\label{By}
\frac{\partial B(z)}{\partial z_2} = \frac1{\pi}\Im\log(z)\Im\left(\frac{1}{1-z}\right)+\frac1{\pi}\Im\left(\frac{1}{z}\right)\Im\log(1-z).\end{equation}
Combining gives
\begin{equation}\label{Bz}
\frac{\partial B(z)}{\partial z} = -\frac1{2\pi}\frac{\arg z }{1-z}-\frac1{2\pi}\frac{\arg(1-z)}{z}.\end{equation}
Taking the $\overline z$ derivative then yields
\begin{equation}\label{LapB}\Delta B(z) = 4B(z)_{z\overline z} =
\Im\left(\frac2{\pi \overline z(1-z)}\right) = \frac{2\Im z}{\pi|z|^2|1-z|^2}.\end{equation}

\subsection{Arctic boundary}
\label{ap:arctic}

For $r<1$, to consider the behavior of $(x,y)$ along the arctic boundary of Theorem~\ref{xyH}, we consider $u=p+q i$ for $p\in\R$ and $q=\eps$ is small. The function $\A(z)$ consists of four analytic pieces giving rise to the following expressions. In all cases the expressions are the limits when $\eps\to0$ except in the range
$r^2<p<1$ where we give the leading asymptotics as $\eps\to0$.
$$R^2 \A(z) = \begin{cases}
\displaystyle -\frac{\pi r^2 p(1-p)}{1-r^2}&p<0\\[.15in]
\displaystyle  -\frac{\pi p(r^2-p)}{1-r^2}&0<p<r^2\\[.15in]
\displaystyle  -i \frac{r^2 \eps^2}{(1-p)(p-r^2)} + \frac{r^2(-2r^2+p+r^2p)\eps^3}{3p(p-r^2)^2(1-p)^2}+O(\eps^4)&r^2<p<1\\[.15in]
\displaystyle  \frac{\pi r^2 p(p-1)}{1-r^2}&1<p.
\end{cases}$$
and
$$r^2 \A(w)-R^2 \A(z) = \begin{cases}
\displaystyle \frac{-\pi  r^2 \left(p^2-2p+r^2\right)}{1-r^2}&p<0\\[.15in]
\displaystyle  -\frac{\pi(r^2-p)^2}{1-r^2}&0<p<r^2\\[.15in]
\displaystyle -\frac{2 r^2 \eps^3}{3p(p-r^2)(1-p)}+O(\eps^4)&r^2<p<1\\[.15in]
\displaystyle \frac{-\pi r^2(p-1)^2}{1-r^2}&1<p.\end{cases}$$

We also have 
$$(1-u)(u-r^2)\theta = \begin{cases}
\displaystyle -\pi(1-p)(r^2-p)&p<r^2\\[.15in] 
\displaystyle \left(1-r^2\right) \eps -\frac{i
   \left(r^2-1\right)
   \left(2 p-r^2-1\right)}{(p-1)
   \left(p-r^2\right)} \eps^2 \\[.15in] 
   \displaystyle \hphantom{(1-r^2)} +\frac{\left(r^2-1
   \right)\left(6 p^2-6 p
   \left(r^2+1\right)+r^4+4
   r^2+1\right)}{3 (p-1)^2
   \left(p-r^2\right)^2} \eps^3  +O\left(\eps^4\right)  &r^2<p<1\\[.15in]
\displaystyle -\pi(p-1)(p-r^2)&1<p.
\end{cases}$$

Let $\Re \F=F$. Then using the fact that $\F$ satisfies the Cauchy-Riemann equations, we have
$$k(p) =\frac{F_p(1+r^2-2p)+ F_{pp}(p-r^2)(1-p)}{r^2}.$$

\bibliographystyle{hmralphaabbrv}
\bibliography{fivevertex} 

\newcommand{\etalchar}[1]{$^{#1}$}
\begin{thebibliography}{DCGH{\etalchar{+}}16}

\bibitem[Agg20]{Aggarwal}
A.~Aggarwal.
\newblock Nonexistence and uniqueness for pure states of ferroelectric
  six-vertex models, 2020, 2004.13272.

\bibitem[Bax16]{baxter2016exactly}
R.~J. Baxter.
\newblock {\em Exactly solved models in statistical mechanics}.
\newblock Elsevier, 2016.

\bibitem[Bog09]{Bogolyubov}
N.~M. Bogolyubov.
\newblock A five-vertex model with fixed boundary conditions.
\newblock {\em Algebra i Analiz},
  \href{http://dx.doi.org/10.1090/S1061-0022-10-01100-3}{21(3):58--78}, 2009.

\bibitem[CEP96]{CEP}
H.~Cohn, N.~Elkies, and J.~Propp.
\newblock Local statistics for random domino tilings of the {A}ztec diamond.
\newblock {\em Duke Math. J.},
  \href{http://dx.doi.org/10.1215/S0012-7094-96-08506-3}{85(1):117--166}, 1996.

\bibitem[CKP01a]{cohn2001variational}
H.~Cohn, R.~Kenyon, and J.~Propp.
\newblock A variational principle for domino tilings.
\newblock {\em Journal of the American Mathematical Society}, 14(2):297--346,
  2001.

\bibitem[CKP01b]{CKP}
H.~Cohn, R.~Kenyon, and J.~Propp.
\newblock A variational principle for domino tilings.
\newblock {\em J. Amer. Math. Soc.},
  \href{http://dx.doi.org/10.1090/S0894-0347-00-00355-6}{14(2):297--346}, 2001.

\bibitem[Dav12]{davis2012circulant}
P.~J. Davis.
\newblock {\em Circulant matrices}.
\newblock American Mathematical Soc., 2012.

\bibitem[DCGH{\etalchar{+}}16]{duminilcopin2016discontinuity}
H.~Duminil-Copin, M.~Gagnebin, M.~Harel, I.~Manolescu, and V.~Tassion.
\newblock Discontinuity of the phase transition for the planar random-cluster
  and potts models with $q>4$, 2016, 1611.09877.

\bibitem[GDS15]{Griffithsetal}
D.~F. Griffiths, J.~W. Dold, and D.~J. Silvester.
\newblock {\em Essential partial differential equations}.
\newblock Springer Undergraduate Mathematics Series. Springer, Cham, 2015.
\newblock Analytical and computational aspects.

\bibitem[GM06]{golinelli2006asymmetric}
O.~Golinelli and K.~Mallick.
\newblock The asymmetric simple exclusion process: an integrable model for
  non-equilibrium statistical mechanics.
\newblock {\em Journal of Physics A: Mathematical and General}, 39(41):12679,
  2006.

\bibitem[GS92]{GwaSpohn}
L.-H. Gwa and H.~Spohn.
\newblock Six-vertex model, roughened surfaces, and an asymmetric spin
  {H}amiltonian.
\newblock {\em Phys. Rev. Lett.},
  \href{http://dx.doi.org/10.1103/PhysRevLett.68.725}{68(6):725--728}, 1992.

\bibitem[GV89]{gessel1989determinants}
I.~M. Gessel and X.~Viennot.
\newblock Determinants, paths, and plane partitions.
\newblock {\em Preprint}, 1989.

\bibitem[HWKK96]{Wuetal}
H.~Y. Huang, F.~Y. Wu, H.~Kunz, and D.~Kim.
\newblock Interacting dimers on the honeycomb lattice: an exact solution of the
  five-vertex model.
\newblock {\em Phys. A},
  \href{http://dx.doi.org/10.1016/S0378-4371(96)00057-X}{228(1-4):1--32}, 1996.

\bibitem[Kas67]{kasteleyn1967graph}
P.~Kasteleyn.
\newblock Graph theory and crystal physics.
\newblock {\em Graph theory and theoretical physics}, pages 43--110, 1967.

\bibitem[KM59]{karlin1959coincidence}
S.~Karlin and J.~McGregor.
\newblock Coincidence probabilities.
\newblock {\em Pacific Journal of Mathematics}, 9(4):1141--1164, 1959.

\bibitem[KO06]{kenyon2006planar}
R.~Kenyon and A.~Okounkov.
\newblock Planar dimers and {H}arnack curves.
\newblock {\em Duke Mathematical Journal}, 131(3):499--524, 2006.

\bibitem[KO07]{kenyon2007limit}
R.~Kenyon and A.~Okounkov.
\newblock Limit shapes and the complex burgers equation.
\newblock {\em Acta Mathematica}, 199(2):263--302, 2007.

\bibitem[KOS06]{kenyon2006dimers}
R.~Kenyon, A.~Okounkov, and S.~Sheffield.
\newblock Dimers and amoebae.
\newblock {\em Annals of {M}athematics}, pages 1019--1056, 2006.

\bibitem[KP20]{Kenyonanalytic}
R.~Kenyon and I.~Prause.
\newblock Limit shapes for gradient models and the hessian.
\newblock {\em in preparation}, 2020.

\bibitem[Lie67]{lieb1967exact}
E.~H. Lieb.
\newblock Exact solution of the problem of the entropy of two-dimensional ice.
\newblock {\em Physical Review Letters}, 18(17):692, 1967.

\bibitem[Lin73]{lindstrom1973vector}
B.~Lindstr{\"o}m.
\newblock On the vector representations of induced matroids.
\newblock {\em Bulletin of the London Mathematical Society}, 5(1):85--90, 1973.

\bibitem[Mor08]{Morrey}
C.~B. Morrey, Jr.
\newblock {\em Multiple integrals in the calculus of variations}.
\newblock Classics in Mathematics. Springer-Verlag, Berlin, 2008.
\newblock Reprint of the 1966 edition [MR0202511].

\bibitem[Nol92]{nolden1992asymmetric}
I.~Nolden.
\newblock The asymmetric six-vertex model.
\newblock {\em Journal of {S}tatistical {P}hysics}, 67(1):155--201, 1992.

\bibitem[RS18]{RS}
N.~Reshetikhin and A.~Sridhar.
\newblock Limit shapes of the stochastic six vertex model.
\newblock {\em Comm. Math. Phys.},
  \href{http://dx.doi.org/10.1007/s00220-018-3253-2}{363(3):741--765}, 2018.

\bibitem[SYY67]{sutherland1967exact}
B.~Sutherland, C.~Yang, and C.~Yang.
\newblock Exact solution of a model of two-dimensional ferroelectrics in an
  arbitrary external electric field.
\newblock {\em Physical Review Letters}, 19(10):588, 1967.

\bibitem[Zag07]{zagier2007dilogarithm}
D.~Zagier.
\newblock The dilogarithm function.
\newblock In {\em Frontiers in Number Theory, Physics, and Geometry II}, pages
  3--65. Springer, 2007.

\end{thebibliography}

\end{document}